\keywords{Free group, Subgroups, Schreier graphs, Cogrowth, Stallings foldings, Deterministic finite automata, Regular languages}
\tikzset{
    %->, % makes the edges directed
    %>=stealth, % makes the arrow heads bold
    node distance=3cm, % specifies the minimum distance between two nodes. Change if necessary.
    every state/.style={thick}, % sets the properties for each ’state’ node
    every edge/.append style={line width=0.25mm}, % sets the properties for each ’state’ node
    initial text=$ $, % sets the text that appears on the start arrow
    }
\newcommand{\I}{\mathcal{I}}
\newcommand{\F}{\mathcal{F}}
\newcommand{\e}{\epsilon}
\newcommand{\A}{\mathcal{A}}
\newcommand{\D}{\mathcal{D}}
\newcommand{\R}{\mathbb{R}}
\newcommand{\initial}{\mathbf{i}}
\newcommand{\final}{\mathbf{f}}
\theoremstyle{plain} %\crefname{satz}{Satz}{S\"atze}
\begin{document}

\title[F.g. subgroups of free gp.-s as formal languages and their cogrowth]{Finitely generated subgroups of free groups as formal languages and their cogrowth}
%\titlecomment{{\lsuper*}OPTIONAL comment concerning the title, \eg,if a variant or an extended abstract of the paper has appeared elsewhere.}

\author[A.~Darbinyan]{Arman Darbinyan}	%required
\address{Department of Mathematics, Texas A\&M University, College Station, TX, USA}	%required
\email{adarbina@math.tamu.edu}  %optional
%\thanks{thanks 1, optional.}	%optional

%%%%%%
\author[R.~Grigorchuk]{Rostislav Grigorchuk}	%optional
\address{Department of Mathematics, Texas A\&M University, College Station, TX, USA}	%optional
\email{grigorch@math.tamu.edu}  %optional
%\thanks{thanks 2, optional.}	%optional

\author[A.~Shaikh]{Asif Shaikh}	%optional
\address{Department of Mathematics, Texas A\&M University, College Station, TX, USA \& Department of Mathematics \& Statistics, R. A. Podar College of Commerce and Economics, Mumbai, India}	%optional
\email{asif.shaikh@rapodar.ac.in}  %optional
%\thanks{thanks 3, optional.}	%optional

%% etc.

%% required for running head on odd and even pages, use suitable
%% abbreviations in case of long titles and many authors:

%%%%%%%%%%%%%%%%%%%%%%%%%%%%%%%%%%%%%%%%%%%%%%%%%%%%%%%%%%%%%%%%%%%%%%%%%%%

%% the abstract has to PRECEDE the command \maketitle:
%% be sure not to issue the \maketitle command twice!

\begin{abstract}
  \noindent Let $H\leq F_m$ be a finitely generated subgroup of the free group $F_m$ of finite rank $m$. It is well-known that the language $L_H$ of reduced words from $F_m$ representing elements of $H$ is regular.  In the current article, using the (extended) core of the Schreier graph of $H$, we construct the minimal deterministic finite automaton that recognizes $L_H$. Then we characterize the finitely generated subgroups $H\leq F_m$ for which $L_H$ is irreducible, and for each such $H$ we explicitly construct an ergodic automaton that recognizes $L_H$. This construction gives us an efficient way to compute the cogrowth series $L_H(z)$ as well as the entropy of $L_H$. Several examples are provided to illustrate the method. Also, a comparison is made with the method of calculation of $L_H(z)$ based on the use of Nielsen system of generators of $H$.
\end{abstract}

\maketitle
%% start the paper here:
\section{Introduction}\label{S:intro}
In \cite{gri1978thesis, MR552478gri1979,MR599539Gri1980}, the notion of {\it cogrowth} of a subgroup $H$ of a free group $F_m$ was introduced  and a cogrowth criterion for amenability of the factor group $F_m/H$ (in case $H$ is normal) was proved. The concept of cogrowth was used to construct  counterexamples concerning various versions of the von Neumann conjecture about the existence of an invariant mean on groups and homogeneous spaces \cite{MR682486adyan1982,MR552478gri1979,ol1980problem}.

In \cite{gri1978thesis,MR552478gri1979,MR599539Gri1980}, it was observed that when $H \leq F_m$ is a finitely generated (f.g.) subgroup, then the cogrowth series 
\begin{equation}
    H(z) = \displaystyle \sum_{w \in H} z^{|w|} = \sum_{n=0}^{\infty} |H_n| z^n\label{eqn:Hcogrowth_def}
\end{equation}
is a rational function, where $|w|$ denotes the length of $w\in H$ and $H_n$ denotes the set of the reduced elements of length $n$ in $H$ with respect to a fixed basis of $F_m$.

Also, by \cite{gri1978thesis}, for  $H \leq F_m$,  the following formula holds:
\begin{equation}
    u(z) = \displaystyle R\left(z,\sqrt{1-\frac{2m-1}{m^2}z^2}\right)H\left( \frac{m\left(1-\sqrt{1-\frac{2m-1}{m^2}z^2}\right)}{(2m-1)z} \right) \nonumber,
\end{equation}
where $R(x,y)$ is a rational function explicitly determined in \cite{gri1978thesis}, and 
$$u(z) = \displaystyle \sum_{n=1}^{\infty} P_{1,1}^{(n)} z^n,$$ where $P_{1,1}^{(n)}$ is the probability of return to the identity $1 \in G = F_m/H$ in the simple random walk on $G$ that starts at the identity. This formula shows a close relation between analytic properties of functions $H(z)$ and $u(z)$. In particular, the algebraicity of $u(z)$ is equivalent to the algebraicity of $H(z)$. If $H$ is finitely generated, then the fact that $H(z)$ is rational was proven in \cite{MR599539Gri1980} using Nielsen system of generators for $H$. The topics related to growth and cogrowth have gotten a lot of attention and popularity, and are widely presented in the literature. See, for example, \cite{MR1436550griha1997}.

Among various open questions related to cogrowth, the authors suggest a conjecture that $H(z)$ is rational if and only if $H$ is finitely generated. Note that when $H \triangleleft F_m$ is a normal subgroup, the conjecture follows from the result of D. Kouksov \cite{MR1487319kuksov1998}.

The alternative approach to proving the rationality of $H(z)$ is via the theory of formal languages. Recall that the classical Chomsky hierarchy of languages begins with the class of regular (also called rational) languages, that is, languages recognizable by finite-automata acceptors \cite{MR645539Hopcroft1979}. Already in the 1950-60's, Chomsky and Sch\"utzenberger  were aware that the rationality of a language $L \subset \Sigma^*$ (where $\Sigma$ is finite alphabet and $\Sigma^*$ denotes the set of finite words over $\Sigma$) implies rationality of the growth series
$$L(z) = \sum_{n=0}^{\infty} |{B_n(L)}| z^n,$$ where ${B_n(L)}$ is the set of words in $L$ of length $n$. In fact, back then it was more popular to consider the following noncommutative version of the growth series, $$\hat{L} = \sum_{w \in L} w,$$ the rationality of which is equivalent to the rationality of $L$ \cite{MR2760561berstel2011, MR0483721saloma1978}. A  concept closely related to cogrowth is the 
\emph{entropy} of languages, which is defined as $$h(L) = \displaystyle\limsup_{n \rightarrow \infty}\frac{\log |{B_n(L)}| }{n}.$$

For a fixed free basis $A = \{a_1,\cdots,a_m\}$  of $F_m$,  we denote by $L_H$ the set of all reduced finite words in $(A\cup A^{-1})^*$ that represent an element of $H\leq F_m$. The fact that regularity of the language $L_H$ is equivalent to the finite generation of $H$ was observed by Anissimov and Seifert \cite{Anissimow1975ZurAC}. Since the intersection of two regular languages is regular (see \cite{MR645539Hopcroft1979}), a direct consequence of the theorem of Anissimov and Seifert is that the intersection of two finitely generated subgroups of $F_m$ is again finitely generated. The last observation is in fact  a well-known theorem of Howson from 1954, \cite{MR65557howson1954}. A proof of the theorem of Anissimov and Seifer, based on the ideas of geometric group theory, is presented in \cite{MR1114609gersten1991}. In fact, there are several ways to prove that if $H \leq F_m$ is finitely generated, then $L_H$ is a regular language. An elegant proof of this statement is presented by I. Kapovich and A. Myasnikov in \cite{MR1882114kapovich2002}. Their proof is based on the idea of J. Stallings from \cite{MR695906Stallings1983}, which is now known as {\it Stallings foldings}, and on the notion of the core of a Schreier graph associated with the triple $(F_m, H, A).$ One of the goals of the current article is to explore this approach from the finite automata theory point of view and make it more detailed and accessible.

We conclude the introduction by outlining the content of the rest of the paper. In Section 2, we recall some of the basic definitions and terminology from the theory of finite automata, formal languages and theory of graphs that will be needed later. In Section 3, we give two versions of the definition of Schreier graph, and hence two versions of the core, one of which we call \emph{the extended core}. We also discuss a combinatorial procedure to obtain the (extended) core of $H\leq F_m$. In Section 4, we recall the definition and properties of the Nielsen system of generators of $H \leq F_m$. We also recall how to obtain a Nielsen system geometrically. In Section 5, using the DFA $\A_{F_m}$ that recognizes the language of freely reduced words of $F_m$ and the DFA $\A_{\widehat{\Delta}_H}$ whose Moore diagram is the extended core $\widehat{\Delta}_H$, we build another automaton, $\A_H$, that recognizes the language $L_H$ of freely reduced words from $F_m$ that represent the elements in $H$. Then we extract from $\A_H$ a  minimal sub-automaton $\D_H$  such that $L(\D_H) = L_H$. In Subsection 5.4, we characterize the f.g. subgroups $H \leq F_m$ for which $L_H$ is irreducible (see Proposition \ref{prop-when-L_H-is-irreducibl}) and for such $H$ we explicitly construct ergodic automaton $\widehat{\D}_H$ that recognizes $L_H$ (see Theorem \ref{thm-ergodicity}). The ergodicity of $\widehat{\D}_H$ allows us, by applying Perron-Frobenius theory, to obtain in Subsection 5.5 a matrix formula for the entropy of $L_H$, allowing its efficient computation (see Theorem \ref{thm-on-computing-entropy}). One more ingredient for showing this formula is based on relating to each finite state automaton so called \emph{base automaton}, and by showing that under certain natural restrictions on the automaton, its entropy coincides with the entropy of its base automaton (see, Proposition \ref{prop-aux}).

The standard {\it transfer matrix} method, that goes back to Kolmogorov's theory of finite Markov chains, leads to the system of linear equations that allows us to compute $H(z)$. All of the above, in principle, is applicable to arbitrary subgroup $H \leq F_m$, but the related automata and the system of equations are finite only when $H$ is finitely generated. In Section 6, we also include reproduction of the proof from \cite{MR599539Gri1980} of rationality of $H(z)$ via Nielsen system of generators. The produced algorithms are of polynomial complexity. An interesting question for further investigation is to check which of the two approaches on a given finite set $\{w_1,\cdots,w_k\}$ of generators of $H$ is more efficient for computing $H(z)$.

Finally, in the last section, several concrete examples illustrating the theoretical part from the previous sections are included.

\section{Preliminaries}\label{S:preli}
Throughout this paper, by $A=\{a_1, \ldots, a_m\}$ we denote a fixed basis of the free group $F_m$, elements of which, along with their inverses, we regard as formal letters, whenever they are considered in the context of formal languages. Correspondingly, the set of generators $\Sigma = A \cup A^{-1}$ of $F_m$ will be regarded as an alphabet whenever it is in the context of formal languages.
 The set of all finite words over the alphabet $\Sigma$ is denoted by $\Sigma^*$. Algebraically, $\Sigma^*$ is the free monoid generated by the finite set $\Sigma$. The {\it length} of a word $w \in \Sigma$, denoted by $|w|$, means the number of letters in $w$ when each letter is counted as many times as it occurs. By $red(w)$ we denote the word that is obtained from $w \in \Sigma$ by free reduction. The subsets of $\Sigma^*$ are referred to as (formal) languages over the alphabet $\Sigma$. A language $L$ is called {\it regular} if it is recognized by some finite automaton. A finite automaton $\A$ is a quintuple, $\A = (Q,\Sigma,\delta,\I,\F)$, consisting of finite set of states $Q$, alphabet $\Sigma$, transition function $\delta:Q\times\Sigma \rightarrow 2^Q$, the set of initial states $\I \subseteq Q$ and the set of final states $\F \subseteq Q$. 

Let $G_{\A}$ be the {\it Moore} or transition diagram of $\A$, that is $G_{\A}$ is a labelled directed graph with vertex set $Q$ and the directed labelled edges are described by the transition function  $\delta$ with labels from $\Sigma$. Namely, vertex $q$ is connected with vertex $q'$ with an edge labeled by $a \in \Sigma$, if $q' \in \delta(q, a)$. (For example, Figure \ref{fig:A_F2} is a depiction of a Moore diagram for an automaton that we define later.) 

Let $e$ be an edge in $G_{\A}$. Then by $o(e), t(e)$ and $l(e)$ we denote origin, terminus and the label of the edge $e$, respectively. A directed path $p=e_1\cdots e_n$ in $G_{\A}$ is called \emph{admissible} if $o(e_1) \in \I, t(e_i) = o(e_{i+1}),$ for $i = 1,\cdots,(n-1)$, $t(e_n) \in \F$. Let $w = x_1\cdots x_n$ be a word over $\Sigma$. The automaton $\A$ accepts the word $w$ if there is an admissible path $p$ in $G_{\A}$ such that $l(p) = l(e_1)\cdots l(e_n) = w$. The set of words that $\A$ accepts is the language recognized by $\A$ and this language is denoted by $L(\A)$. 

An automaton $\A$ is \emph{ergodic} if its Moore diagram $G_{\A}$ is strongly connected, that is, for any two states $q$ and $q' \in Q$ there exists a path connecting $q$ to $q'$. A language $L \subseteq \Sigma^*$ is \emph{irreducible} if, given two words $w_1, w_2 \in L$, there exists a word $w \in \Sigma^*$ such that the concatenation $w_1ww_2 \in L$. A regular language $L$ is irreducible if and only if it is generated by some ergodic automaton, see Theorem 3.3.11 of \cite{Lind-Marcus}. An automaton $\A$ is \emph{unambiguous} if for every $w \in L(\A)$, there is a unique admissible path $p \in G_{\A}$ such that $l(p) = w$. An automaton $\A$ is \emph{deterministic}, if for each state of $Q$, all outgoing edges carry distinct labels. It is obvious that a deterministic automaton with one initial state is unambiguous. Note that $\A$ is deterministic if the codomain of $\delta$ is $\{\emptyset\} \cup Q$, that is $\delta:Q\times\Sigma \rightarrow \{\emptyset\} \cup Q$. The automaton $\A' = (Q',\Sigma',\delta',\I',\F')$ is said to be a \emph{subautomaton} of $\A=(Q,\Sigma,\delta,\I,\F)$, if
$Q' \subseteq Q$, $\Sigma' \subseteq \Sigma$, $\I' \subseteq \I$, $\F' \subseteq \F$, and for each $q' \in Q'$, $a' \in \Sigma'$, $\delta'(q', a') = \delta(q, a) \cap Q'$. In the language of Moore diagrams, an equivalent definition would be: $\A'$ is a subautomaton of $\A$ if its Moore diagram  $G_{\A'}$ is a subdiagram of $G_{\A}$.

Let $\A$ be a finite automaton. We say that $\A$ is \emph{essential} if in its Moore diagram $G_{\A}$ every vertex (hence, also every edge) belongs to some path connecting an initial state to a final state, i.e. to an admissible path. If $\A'$ is an essential subautomaton of $\A$ such that $L(\A')=L(\A)$, then we say that $\A'$ is an \emph{essential part} of $\A$.

Let $k \geq 1$. An automaton $\A$ has \emph{homogeneous ambiguity} $k$ if, for any nonempty word $w \in L(\A)$, there are exactly $k$ admissible paths $p_1,\cdots,p_k$ in $G_{\A}$ with label $w$. In case the number of such paths for each $w\in L(\A)$ is bounded from above by $k$, we say that $\A$ has \emph{bounded ambiguity}. We shall use the terminology \emph{DFA} $\A$ for the deterministic (unambiguous) finite automaton with exactly one initial state i.e. $\A = (Q,\Sigma,\delta,\{q_0\},\F)$. The DFA $\A$ is said to be \emph{minimal} if there is no DFA with smaller number of states that recognizes the same language $L(\A)$. It is known that for any regular language $L$, up to isomorphism, there is a unique minimal DFA recognizing $L$, called \emph{the minimal deterministic finite automaton} of $L$  (see Theorem 3.10 on page 67 of \cite{MR645539Hopcroft1979}).

By \emph{inaccessible state} we mean a non-initial state that does not have any incoming edge. Let $\A$ be a DFA and let $w, w' \in \Sigma^*$. For every state $q$ and word $w$, the value of $\delta(q, w)$ is the end state of the path in $\A$ that starts at $q$ and reads the input word $w$. In case such path does not exist, we define $\delta(q, w)=\emptyset$. 
Let $L = L(\A)$ be the language recognized by DFA $\A$ and let $w, w' \in \Sigma^*$. Then there is a natural equivalence relation $R_L$ on words associated with $L$, given by
\begin{equation}
    wR_Lw' \iff \forall~z \in \Sigma^* \textnormal{ either both or neither of } wz \textnormal{ and } w'z \textnormal{ is in } L \label{eqn:relation_R_L}
\end{equation}
Observe that the number of equivalence classes of $R_L$ is at most the number of states of $\A$, which is finite. Now we recall a version of Myhill-Nerode Theorem.
{\thm \label{NERODE:Theorem} (Theorem 3.9 and Theorem 3.10 of \cite{MR645539Hopcroft1979}) Let $L \subseteq \Sigma^*$ be a regular language. Then, the relation $R_L$ defines a DFA $\A' = (Q',\Sigma,\delta',\{q_0'\},\F')$ for $L$ whose states correspond to the equivalence classes of $R_L$. Moreover, this is the unique minimal DFA for $L$ (up to isomorphism), where 
$$Q' = \{\left[w\right]\mid  w \in \Sigma^*\}$$
$$\{q_0'\} = \textnormal{the equivalence class of the empty word}.$$
$$\F' = \{\left[w\right]\mid  w \in L\} \textnormal{ and }$$
$$\delta'(\left[w\right],a) = \left[wa\right].$$}
A graph $G = (V,E)$ is called locally finite if the degree of each vertex of $V$ is finite. The $in$ degree of a vertex $v$ of the directed graph is the number of edges in the graph that have $v$ as the terminus. Similarly the $out$ degree of a vertex $v$ of the graph is the number of edges in the graph that have $v$ as the origin. We denote $in$ and $out$ degree of the vertex $v$ by $\deg^-(v)$ and $\deg^+(v).$ If for every vertex $v$ of the graph $\deg^+(v) = \deg^-(v)$, then we will ignore the signs and by $\deg(v)$ we denote the degree of the vertex $v$. For further details on the theory of finite automata, we refer the reader to \cite{MR645539Hopcroft1979} and \cite{MR0483721saloma1978}. 
\section{The Schreier graph and the core associated with a subgroup $H$ of $F_m$}\label{S:3_Schr}
This section is devoted to the Schreier graph and the core of a f.g. subgroup $H$ of $F_m$.  We shall also discuss the procedure to obtain the core of $H$ using Stallings foldings and some of the important properties of the core.
\subsection{The Schreier graph of subgroup $H$ of $F_m$}\label{S:3.1_Schr}
We define two versions of the Schreier graph associated with $H \leq F_m$, which we denote by $\Gamma$ and $\widehat{\Gamma}$, respectively. The set of vertices of $\Gamma$ and $\widehat{\Gamma}$ is the same and is the set $V = \{ H_g \mid g \in F_m\}$ of right cosets. The set of edges $E$ of $\Gamma$ is the set $E = \{(H_g, H_{ga})\mid  g \in F_m, a \in A\}$ consisting of pairs $e = (H_g, H_{ga})$ of cosets. The edges are oriented and $H_g$ is the origin $o(e)$ of $e$ while $H_{ga}$ is the terminus $t(e)$ of $e$. Moreover, such an edge has the label $\mu(e) = a.$ Each vertex in $\Gamma$ has $m$ outgoing edges whose labels constitute the set $A$. The graph $\widehat{\Gamma}$ is obtained from $\Gamma$ by adding edges from the set $\overline{E} = \{\overline{e}\mid e \in E\}$ where $\overline{e} = (H_{ga},H_g)$ if $e = (H_g, H_{ga})$ and the label $\mu(\overline{e}) = \mu(e)^{-1} = a^{-1} \in A^{-1}.$ Thus $\Gamma = (V, E, \mu)$ and $\widehat{\Gamma} = (V,E\cup \overline{E},\widehat{\mu})$, where $\widehat{\mu}(e) = \mu(e)$ if $e \in E$ and $\widehat{\mu}(\bar e) = \mu(e)^{-1}$ if $\bar e \in \overline{E}$. Each vertex of $\widehat{\Gamma}$ has $2m$ outgoing edges and $2m$ incoming edges, whose labels constitute the set $\Sigma = A \cup A^{-1}$. We call $\Gamma$ the \emph{Schreier graph} and $\widehat{\Gamma}$ the \emph{extended Schreier graph} of $H$. The vertex $v_0 = H_1 = H$ is the distinguished vertex, so in fact $\Gamma$ and $\widehat{\Gamma}$ are rooted graphs with root $v_0$. Observe that in fact, according to the standard terminology in graph theory, $\Gamma$ and $\widehat{\Gamma}$ are multigraphs as they may have loops and multiple edges. We will use the obvious notion of path $p$ in directed graph $\Gamma$ or $\widehat{\Gamma}$ and its label $\mu(p) \in A^*$ or $\widehat{\mu}(p) \in \Sigma^*$, respectively.
\subsection{The core graph of subgroup $H$ of $F_m$}\label{S:3.2_Core}
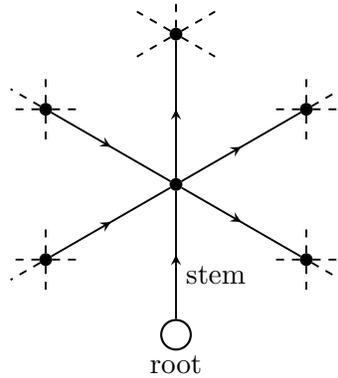
\begin{figure}[!htb]
    \centering
    \begin{tikzpicture}[scale=1,decoration={markings, mark= at position 0.5 with {\arrow{stealth}}}]
\tikzstyle{knode}=[circle,draw=black,thick,text width = 1.5 pt,align=center,inner sep=1pt]
\tikzstyle{rnode}=[circle,draw=black,thick,text width = 9 pt,align=center,inner sep=1pt]
\node (q1) at (0,-2) [rnode] {};
\draw[] (q1) node[below=4pt] {root};
\draw[] (0,-1.2) node[right] {  stem};
\node[fill] (q2) at (0,0) [knode] {};
\node[fill] (q3) at (-1.7320,-1) [knode] {};
\node[fill] (q4) at (-1.7320,1) [knode] {};
\node[fill] (q5) at (0,2) [knode] {};
\node[fill] (q6) at (1.7320,1) [knode] {};
\node[fill] (q7) at (1.7320,-1) [knode] {};
\draw (q1) edge[postaction={decorate}] (q2)
      (q3) edge[postaction={decorate}] (q2)
      (q4) edge[postaction={decorate}] (q2)
      (q2) edge[postaction={decorate}] (q5)
      (q2) edge[postaction={decorate}] (q6)
      (q2) edge[postaction={decorate}] (q7);
\draw[-,dashed] (q5) edge (0,2.5)
        (q5) edge (1.7320*0.3,1.7)
        (q5) edge (-1.7320*0.3,1.7)
        (q5) edge (1.7320*0.3,2.3)
        (q5) edge (-1.7320*0.3,2.3);
\draw[-,dashed] (q3) edge (-2.2,-2.2*0.5773)
       (q3) edge (-1.7320,-.5)
       (q3) edge (-1.7320,-1.5)
       (q3) edge (-2.232,-1)
       (q3) edge (-1.232,-1);
\draw[-,dashed] (q4) edge (-2.2,2.2*0.5773)
       (q4) edge (-1.7320,1.5)
       (q4) edge (-1.7320,.5)
       (q4) edge (-2.232,1)
       (q4) edge (-1.232,1);
\draw[-,dashed] (q6) edge (2.2,2.2*0.5773)
       (q6) edge (1.7320,.5)
       (q6) edge (1.7320,1.5)
       (q6) edge (2.232,1)
       (q6) edge (1.232,1);
\draw[-,dashed] (q7) edge (2.2,-2.2*0.5773)
       (q7) edge (1.7320,-1.5)
       (q7) edge (1.7320,-.5)
       (q7) edge (2.232,-1)
       (q7) edge (1.232,-1);
\end{tikzpicture}
    \caption{A branch}
    \label{fig:branch}
\end{figure}
The \emph{core} $\Delta_H = (\widehat{V},E_{\Delta_H})$ is the subgraph of the Schreier graph $\Gamma$ that is defined as the union of closed paths containing the root vertex $v_0$. 

A branch of a $k$-regular tree is a subtree which has one degree 1 vertex, which we call the {\it root} of the branch and all the other vertices have degree $k$. Such a branch is uniquely determined by its stem, which is the oriented edge going from the root to the interior of the branch, see Figure (\ref{fig:branch}). A subgraph of the Schreier graph $\Gamma$ isomorphic to a branch in the Cayley graph $X_m$ of $F_m$ (with its labeling) is called a hanging branch.  The Cayley graph $X_m = Cay(F_m, \Sigma)$ is a homogeneous tree of degree $|\Sigma|.$ The core of the Schreier graph $\Gamma$ can be obtained also by removing the hanging branches. Moreover, if the core $\Delta_H$ is known, then the graph $\Gamma$ can be obtained from the core $\Delta_H$ by filling the deficient valencies of the vertices of $\Delta_H$ with maximal hanging branches (so that all the degrees of the resulting graph have the degree $2m$). Thus, since the Schreier graph $\Gamma$ is connected, its core $\Delta_H$ is also connected. We refer the reader to \cite{MR2921182gri_ergodic} for the descriptions of the Hopf decomposition of the boundary in terms of $\Gamma$, $\Delta_H$ and the collection of hanging branches.

Let $\overline{E}_{\Delta_H} = \left\{\overline{e}\mid e \in E_{\Delta_H}\right\}$. We now define the {\it extended core} graph $\widehat{\Delta}_H = \left(\widehat{V},\widehat{E},\widehat{\mu}\right)$ from the core $\Delta_H$, where $\widehat{E} = E_{\Delta_H}\cup\overline{E}_{\Delta_H}$. Observe that the extended core $\widehat{\Delta}_H$ is a subgraph of $\widehat{\Gamma}.$ It is easy to see that if $e \in \widehat{E}$ then $\overline{e} \in \widehat{E}$ (i.e. if $e$ belongs to the path $p$, then $\overline{e}$ belongs to the path $\overline{p}$ obtained from $p$ by obvious inversion of the direction). We say that a labeled path  is \emph{reduced} if it does not contain adjacent edges with labels of the form $aa^{-1}$, otherwise, we say that the path is not reduced or  we say that it \emph{backtracks}. Note that paths in the graph $\widehat{\Delta}_H$ are not necessarily reduced and may backtrack. For example, a path $p = e\overline{e}$ in $\widehat{\Delta}_H$ from $v$ to $v$, where $e, \overline{e} \in \widehat{E}$ and $o(e) =v = t(\overline{e}), t(e) = v' = o(\overline{e})$ is not a reduced path. In further applications we regard $\widehat{\Delta}_H$ as a DFA $\A_{\widehat{\Delta}_H}$ having the root vertex $v_0$ as the initial and the final states. More precisely, the Moore diagram of the DFA $$\A_{\widehat{\Delta}_H} = \left(\widehat{V}, \Sigma, \delta_{\widehat{\Delta}_H}, \{v_0\}, \{v_0\}\right)$$ is the extended core graph $$\widehat{\Delta}_H = \left(\widehat{V},\widehat{E},\widehat{\mu}\right), $$ where for each $e \in \widehat{E}$ connecting vertex $v$ to $va_j^{\e}$, we have
\begin{equation}
    \delta_{\widehat{\Delta}_H}(v,a_j^{\e}) = va_j^{\e},\label{eqn:extended_transition}
\end{equation}
where $\widehat{\mu}(e) = a_j^{\e} \in \Sigma.$ 
The language $L(\A_{\widehat{\Delta}_H})$ of the DFA $\A_{\widehat{\Delta}_h}$ contains words $w = \widehat{\mu}(p)$, where $p$ is a admissible path of $\A_{\widehat{\Delta}_H}$. Notice that the admissible paths $p$ in $\A_{\widehat{\Delta}_H}$ may or may not be reduced. Hence not all words in the language $L(\A_{\widehat{\Delta}_H})$ are reduced. We denote by $L_H$ the language of reduced elements of a f.g. subgroup $H$ of $F_m.$ Theorem 5.1 from \cite{MR1882114kapovich2002} can be read as
{\thm \label{Theorem:lang_extended_core} $L_H \subseteq L(\A_{\widehat{\Delta}_H})$. Moreover, the words in $L(\A_{\widehat{\Delta}_H})\setminus L_H  $  are not reduced.}

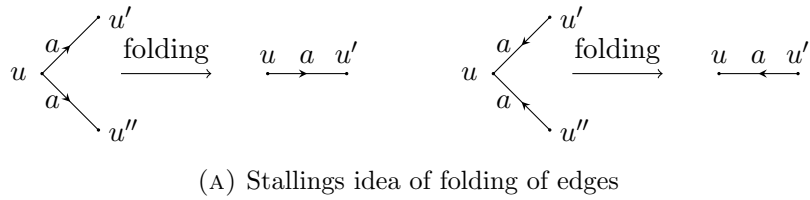
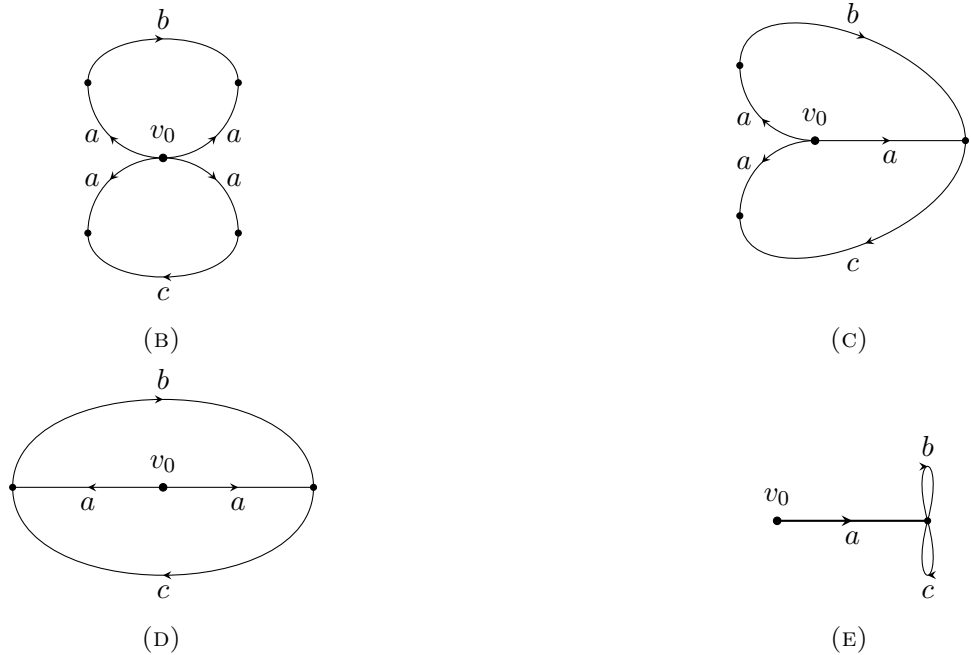
\begin{figure}[!htb]
\begin{subfigure}[b]{\textwidth}
        \centering
          \begin{tikzpicture}[scale=1.5,decoration={markings, mark= at position 0.5 with {\arrow{stealth}}}]
          \draw[fill] (0,0) node[left=2pt] {$u$} circle [radius=0.010];
          \draw[fill] (0.5,0.5) node[right] {$u'$} circle [radius=0.010];
          \draw[fill] (0.5,-0.5) node[right] {$u''$} circle [radius=0.010];
          \draw[fill] (2,0) node[above] {$u$} circle [radius=0.010];
          \draw[fill] (2.7,0) node[above] {$u'$} circle [radius=0.010];
          \draw[thin,postaction={decorate}] (0,0)  to [out=45,in=225] node[left]{$a$} (.5,.5) ;
          \draw[thin,postaction={decorate}] (0,0)  to [out=-45,in=135] node[left]{$a$} (.5,-.5) ;
          \draw[thin,->] (.7,0) to node[above]{folding} (1.5,0);
          \draw[thin,postaction={decorate}] (2,0)  to node[above]{$a$} (2.7,0) ;
         %%%%%%%%%%%%%%%%%%%%%%%%%%%%%%%%%%%%%%%%%%%%%%%%%%%%%%%%%
          \draw[fill] (4,0) node[left=2pt] {$u$} circle [radius=0.010];
          \draw[fill] (4.5,0.5) node[right] {$u'$} circle [radius=0.010];
          \draw[fill] (4.5,-0.5) node[right] {$u''$} circle [radius=0.010];
          \draw[fill] (6,0) node[above] {$u$} circle [radius=0.010];
          \draw[fill] (6.7,0) node[above] {$u'$} circle [radius=0.010];
          \draw[thin,postaction={decorate}] (4.5,0.5)  to [in=45,out=225] node[left]{$a$} (4,0) ;
          \draw[thin,postaction={decorate}] (4.5,-0.5)  to [in=-45,out=135] node[left]{$a$} (4,0) ;
          \draw[thin,->] (4.7,0) to node[above]{folding} (5.5,0);
          \draw[thin,postaction={decorate}] (6.7,0)  to node[above]{$a$} (6,0) ;
          \end{tikzpicture}
          \caption{Stallings idea of folding of edges}
        \label{fig:folding_steps}
       \end{subfigure}
       \newline
    \begin{subfigure}[b]{0.4\textwidth}
        \centering
    \begin{tikzpicture}[scale=2,decoration={markings, mark= at position 0.5 with {\arrow{stealth}}}]
        \draw[fill] (0,0) node[above=2pt] {$v_0$} circle [radius=0.025];
        \draw[fill] (0.5,0.5) node[] {} circle [radius=0.02];
        \draw[fill] (-0.5,0.5) node[] {} circle [radius=0.02];
        \draw[fill] (-0.5,-0.5) node[] {} circle [radius=0.02];
        \draw[fill] (0.5,-0.5) node[] {} circle [radius=0.02];
        \draw[thin,postaction={decorate}] (0,0)  to [out=0,in=270] node[right]{$a$} (.5,.5) ;
    	\draw[thin,postaction={decorate}] (-.5,.5)  to [out=90,in=90] node[above]{$b$}(.5,.5) ;
    	\draw[thin,postaction={decorate}] (0,0)  to [out=180,in=270] node[left]{$a$} (-.5,.5) ;
    	\draw[thin,postaction={decorate}] (0,0)  to [out=0,in=90] node[right]{$a$} (.5,-.5) ;
    	\draw[thin,postaction={decorate}] (.5,-.5)  to [out=270,in=270] node[below]{$c$}(-.5,-.5) ;
    	\draw[thin,postaction={decorate}] (0,0)  to [out=180,in=90] node[left]{$a$} (-.5,-.5) ;
    \end{tikzpicture}
          \caption{}
        \label{fig:staling_wedge}
       \end{subfigure}
       \hfill
       \begin{subfigure}[b]{0.4\textwidth}
        \centering
    \begin{tikzpicture}[scale=2,decoration={markings, mark= at position 0.5 with {\arrow{stealth}}}]
        \draw[fill] (0,0) node[above=2pt] {$v_0$} circle [radius=0.025];
        \draw[fill] (1,0) node[] {} circle [radius=0.02];
        \draw[fill] (-0.5,0.5) node[] {} circle [radius=0.02];
        \draw[fill] (-0.5,-0.5) node[] {} circle [radius=0.02];
        \draw[thin,postaction={decorate}] (0,0)  to [out=0,in=180] node[below]{$a$} (1,0) ;
    	\draw[thin,postaction={decorate}] (-.5,.5)  to [out=90,in=90] node[above]{$b$}(1,0) ;
    	\draw[thin,postaction={decorate}] (0,0)  to [out=180,in=270] node[left]{$a$} (-.5,.5) ;
    	\draw[thin,postaction={decorate}] (1,0)  to [out=270,in=270] node[below]{$c$}(-.5,-.5) ;
    	\draw[thin,postaction={decorate}] (0,0)  to [out=180,in=90] node[left]{$a$} (-.5,-.5) ;
    \end{tikzpicture}
          \caption{}
        \label{fig:folding1}
       \end{subfigure}
       \newline
       \begin{subfigure}[b]{0.4\textwidth}
        \centering
    \begin{tikzpicture}[scale=2,decoration={markings, mark= at position 0.5 with {\arrow{stealth}}}]
        \draw[fill] (0,0) node[above=2pt] {$v_0$} circle [radius=0.025];
        \draw[fill] (1,0) node[] {} circle [radius=0.02];
        \draw[fill] (-1,0) node[] {} circle [radius=0.02];
        \draw[thin,postaction={decorate}] (0,0)  to [out=0,in=180] node[below]{$a$} (1,0) ;
    	\draw[thin,postaction={decorate}] (-1,0)  to [out=90,in=90] node[above]{$b$}(1,0) ;
    	\draw[thin,postaction={decorate}] (0,0)  to [out=180,in=0] node[below]{$a$} (-1,0) ;
    	\draw[thin,postaction={decorate}] (1,0)  to [out=270,in=270] node[below]{$c$}(-1,0) ;
    \end{tikzpicture}
          \caption{}
        \label{fig:folding2}
       \end{subfigure}
       \hfill
       \begin{subfigure}[b]{0.4\textwidth}
        \centering
    \begin{tikzpicture}[scale=2,decoration={markings, mark= at position 0.5 with {\arrow{stealth}}}]
        \draw[fill] (0,0) node[above=2pt] {$v_0$} circle [radius=0.025];
        \draw[fill] (1,0) node[] {} circle [radius=0.02];
        \draw[thick,postaction={decorate}] (0,0)  to [out=0,in=180] node[below]{$a$} (1,0) ;
    	\draw[thin,postaction={decorate}] (1,0)  to [loop above, above] node{$b$} (1,0) ;
    	\draw[thin,postaction={decorate}] (1,0)  to [loop below, below] node{$c$} (1,0) ;
    	
    \end{tikzpicture}
          \caption{}
        \label{fig:folded}
       \end{subfigure}
    \caption{Construction of the core $\Delta_H$ of $H = \langle aba^{-1}, aca^{-1}\rangle$. 
    In (\ref{fig:staling_wedge}), we start with a bouquet of two loops attached to a base vertex $v_0$. Each loop we split into 3 oriented edges with positive labels $a,b,a$ and $a,c,a$, respectively. Observe that the first two edges of each loop have clockwise orientation whereas the third edge in both the loops has the reverse orientation. In Figures (\ref{fig:folding1}) and (\ref{fig:folding2}), we use Stallings foldings (see \ref{fig:folding_steps}) to obtain the required $\Delta_H$ (see \ref{fig:folded}).}
    \label{fig:stallings}
\end{figure}
\subsection{Stallings foldings}\label{S:3.3_Stallings}
Let $H$ be generated by elements $w_1,\cdots,w_k$. We identify $w_i$ with freely reduced words in the alphabet $\Sigma$. Then there is a simple combinatorial procedure to obtain the core $\Delta_H$. This procedure is based on the topological idea of \emph{folding} developed by J. Stallings in \cite{MR695906Stallings1983}. Roughly the procedure can be described as follows.

Start with a bouquet of $k$ circles glued together along a vertex $v_0$. Split the $i$-th circle, $1\leq i \leq k$, into $|w_i|$ edges which are oriented and labeled by the letters from the set $A\cup A^{-1}$ so that the label of the $i$-th circle(as read from $v_0$ to $v_0$) is precisely the word $w_i$. Reverse the edges with (negative) label $x^{-1}$ from the set $A^{-1}$ and assign the (positive) label $x$ from the set $A$ (see Figure \ref{fig:stallings}).

Suppose $e_1, e_2$ are edges of this graph with a common origin and the same label $ x \in A$. Then, informally, folding the graph at $e_1, e_2$ means identifying $e_1$ and $e_2$ in a single new edge labeled by $x$. 

At the first step, fold the graph at the edges that are originated at the root vertex $v_0$. After performing these foldings, fold the graph at the edges that are originated at the other vertices and continue the process. As we assumed that the subgroup $H$ is finitely generated, the process will stop after applications of finitely many steps (i.e. it stops when no more folding is possible). The resulting graph, up to isomorphism of labeled graphs, does not depend on the performed sequence of foldings. And the resulting graph is isomorphic to the core $\Delta_H$ (see \cite{MR1882114kapovich2002, MR695906Stallings1983}). Moreover, the algorithm based on above procedure has polynomial time complexity (see \cite{MR1882114kapovich2002}).

In fact, the above procedure works also for the situation of an infinitely generated group $H = \langle w_1,\cdots,w_n,\cdots\rangle$. One just has to begin with folding of the bouquet of the two loops labeled by $w_1$ and $w_2$, after the process stops add to the obtained graph a new loop labeled by $w_3$, apply folding, then add $w_4$ etc. The process will converge to the infinite folded (i.e. no more folding is possible) graph $\Delta_H$ with rooted vertex $v_0$.

The following lemma lists some of the well-known properties of the graph $\widehat{\Delta}_H$ of $H \leq F_m$. 
\begin{lem} \label{lem:properties_core} Let $\widehat{\Delta}_H= \left(\widehat{V},\widehat{E},\widehat{\mu}\right)$ be the extended core of $H$. Then the following holds.
\begin{enumerate}
    \item \label{prop_1} The graph $\widehat{\Delta}_H$ has no degree one vertices, except possibly for the root vertex $v_0$.
    \item \label{prop_2} The degree of each vertex in $\widehat{\Delta}_H$ is at most $2m$.
    \item \label{prop_3} $[F_m:H] < \infty$ if and only if $\widehat{\Delta}_H$ is a finite $2m$-regular graph. In this case $[F_m:H] = |\widehat{V}|$.
    \item \label{prop_4} $H$ is normal in $F_m$ if and only if $\widehat{\Delta}_H$ is a $2m$-regular graph and any vertex $v$ of $\widehat{V}$ can be considered as the root vertex of $\widehat{\Delta}_H$.
    \item \label{lem-euler-path} For any edge $e \in \widehat{E}$ there exists a reduced path $p$ in $\widehat{\Delta}_H$ such that it travels $e$ only once and $o(p)=t(p)=v_0$. 
    \item \label{essential_A_delta} The DFA $\A_{\widehat{\Delta}_H}$ is essential.
\end{enumerate}\end{lem}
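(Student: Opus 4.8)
The plan is to establish the six assertions in an order that lets the later ones rest on the earlier, with the real work concentrated in (5), from which (6) is then immediate. The structural fact I rely on repeatedly is that, as a subgraph of the coset graph $\Gamma$, the core $\Delta_H$ inherits determinism and co-determinism: in $\Gamma$ every vertex has exactly one outgoing and one incoming edge for each label in $A$, so in $\Delta_H$ each vertex has at most one of each. This already yields (2): at a vertex $v$ of $\widehat{\Delta}_H$ the outgoing edges are the surviving core edges (labels in $A$, at most one per label) together with the reverses of the incoming core edges (labels in $A^{-1}$, at most one per label), so $\deg^+(v)\le 2m$; since $\widehat{E}$ is closed under $e\mapsto\overline{e}$ we get $\deg^-(v)=\deg^+(v)=\deg(v)\le 2m$. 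For (1) I use the definition of the core as the union of reduced closed paths based at $v_0$: a non-root vertex $v$ is traversed in the interior of such a path $q$, which arrives along some edge $f$ and departs along some $f'$ with $f'\ne\overline{f}$ by reducedness; then $f'$ and $\overline{f}$ are two distinct outgoing edges at $v$, so $\deg(v)\ge 2$. The root is exempt since it may sit at the tip of a stem, as for $H=\langle a_1a_2a_1^{-1}\rangle$, where $v_0$ has degree one.

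Assertions (3) and (4) both amount to detecting when $\widehat{\Delta}_H$ exhausts the extended Schreier graph $\widehat{\Gamma}$. For (3): finite index makes $\Gamma$ finite, hence free of (necessarily infinite) hanging branches, so $\Delta_H=\Gamma$ and $\widehat{\Delta}_H=\widehat{\Gamma}$ is finite and $2m$-regular with $|\widehat{V}|=[F_m:H]$; conversely a connected $2m$-regular subgraph of the connected $2m$-regular graph $\widehat{\Gamma}$ must equal $\widehat{\Gamma}$ (a $2m$-regular vertex already carries all its incident $\widehat{\Gamma}$-edges, so no boundary is possible), forcing finite index and $[F_m:H]=|\widehat{V}|$. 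For (4) the key point is that the rooted labelled graph $(\widehat{\Gamma},H_g)$ is the extended Schreier graph of the conjugate $g^{-1}Hg$, so ``any vertex may serve as root'' is equivalent to $g^{-1}Hg=H$ for every $g$, i.e.\ to normality; the $2m$-regularity then records that for a nontrivial normal $H$ the quotient Cayley graph has a cycle through $v_0$, hence, by vertex-transitivity, through every vertex, so again $\Delta_H=\Gamma$. I would flag the degenerate case $H=\{1\}$ (quotient a tree, core a single point), which the statement is implicitly read to exclude.

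The technical heart is (5). First I fix the reading that $e$ is a \emph{directed} edge, so that ``travels $e$ only once'' restricts the directed edge $e$ while its reverse $\overline{e}$ may still occur. Fix a spanning tree $T$ of $\Delta_H$ rooted at $v_0$ and write $\tau_v$ for the reduced tree path $v_0\to v$. If the geometric edge underlying $e\colon u\to w$ is \emph{not} in $T$, then $\tau_u\,e\,\overline{\tau_w}$ is the basis element of $H$ attached to $e$; folding makes it reduced and it crosses $e$ exactly once. If that geometric edge lies in $T$ the same concatenation backtracks, so I split on whether it is a bridge of $\Delta_H$. If it is not a bridge, I return from $w$ to $v_0$ along an edge-disjoint alternative, repairing any backtrack at $u$ or $w$ with the degree-$\ge 2$ property from (1). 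If it is a bridge, its far side lies in the core and so is not a hanging branch, hence contains a nontrivial reduced loop $c$ based at $w$; then $\tau_u\,e\,c\,\overline{e}\,\overline{\tau_u}$ is reduced and traverses the geometric edge once in each direction, so it uses the given directed edge exactly once whichever orientation $e$ has. The part I expect to be fiddly, and hence the main obstacle, is checking reducedness at the gluing vertices $u$ and $w$ and guaranteeing the alternative return path or interior loop; the degree-$\ge 2$ property and the directed-edge reading are exactly what carry this through.

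Finally, (6) follows at once from (5): the unique initial and final state of $\A_{\widehat{\Delta}_H}$ is $v_0$, so every closed path at $v_0$ is admissible, and (5) places each edge on such a path; every vertex lies on an edge (degree $\ge 2$ off the root, the root being trivial), so every vertex and edge of the Moore diagram of $\A_{\widehat{\Delta}_H}$ lies on an admissible path, which is precisely essentiality.
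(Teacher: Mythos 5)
Your argument is correct in substance, but it takes a genuinely different route from the paper: the paper disposes of (1)--(4) and (6) by citing Propositions 3.8, 8.3, Theorem 8.14 and Lemma 3.9 of Kapovich--Myasnikov, and declares (5) ``immediate from the definition'' of the core as a union of closed paths at $v_0$ --- which in fact only gives that each edge lies on \emph{some} reduced closed $v_0$-path, not the ``only once'' refinement; your bridge/non-bridge analysis is precisely what supplies that refinement, so your self-contained treatment is arguably more honest. Two remarks on the parts you flagged as fiddly. First, the non-bridge case closes more cleanly than your sketch suggests and needs no ``repair'': since $\Delta_H$ minus the geometric edge of $e\colon u\to w$ is connected and contains $v_0,u,w$, pick reduced paths $\sigma\colon v_0\to u$ and $\rho\colon w\to v_0$ inside that subgraph; then $\sigma e\rho$ cannot backtrack at $u$ or $w$ because neither $\sigma$ nor $\rho$ uses $e$ or $\overline e$, and it traverses $e$ exactly once (this also covers non-tree edges, making the spanning tree unnecessary). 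Second, in the bridge case the claim that the far side ``contains a nontrivial reduced loop based at $w$'' needs the small extra step that a finite connected graph with at least one edge and at most one vertex of degree $<2$ contains a cycle, and that one bases a reduced loop at $w$ by running a minimal reduced path out to that cycle, around it in a suitably chosen direction, and back; your degree-$\ge 2$ input from (1) is exactly what makes this work, and one must also place $v_0$ on the $u$-side (swapping the roles of $u$ and $w$ otherwise). Finally, your directed-edge reading of ``travels $e$ only once'' is not merely a convenience but forced: a reduced closed path at $v_0$ must cross a bridge an even number of times, so under the geometric reading (5) would be false already for $H=\langle a_1a_2a_1^{-1}\rangle$. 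With these points filled in, your deduction of (6) from (5) and of (2)--(4) from determinism of $\Gamma$ and the branch description of the core matches what the cited sources establish.
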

\begin{proof}
For the proofs of above properties (\ref{prop_1})-(\ref{prop_4}) we refer the reader to Propositions 3.8 and 8.3, and Theorem 8.14 of \cite{MR1882114kapovich2002}. Property (\ref{lem-euler-path}) follows immediately from the definition of the extended core graph $\widehat{\Delta}_H$. Property (\ref{essential_A_delta}) follows from Lemma 3.9 of \cite{MR1882114kapovich2002}. 
\end{proof}

\begin{conv}
From now on, we will assume that $H$ is a non-trivial finitely generated subgroup of a free group $F_m=F(A)$, where $A=\{a_1, a_2, \ldots, a_m\}$. $\Sigma = A \cup A^{-1}$. By $v_0$ we always mean the root vertex of the (extended) core of $H$. By $\deg(v_0)$ we denote the in (=out) degree of $v_0$ in the extended core graph $\widehat{\Delta}_H$.
\end{conv} 

\section{The Nielsen system of generators}\label{S:4_Niel}
Let $H$ be a subgroup of $F_m$ generated by the set $S = \{w_i\}_{i=1}^k, 1 \leq k \leq \infty$ of freely reduced words over $\Sigma$. We further assume that $S$ is a Nielsen basis. Recall that a set $S$ of freely reduced words from $\Sigma^*$ has the Nielsen property if the following two conditions hold:
\begin{enumerate}
    \item \label{nielsen1} If $u, v \in S\cup S^{-1}$ and $u \neq v^{-1}$ then 
    $$|u\cdot v|_{\Sigma} \geq |u|_{\Sigma}, |u\cdot v|_{\Sigma} \geq |v|_{\Sigma}.$$
    \item \label{nielsen2} If $u, v, w \in S\cup S^{-1}$ and $u \neq w^{-1}$, $v \neq w^{-1}$ then 
    $$|u\cdot v\cdot w|_{\Sigma} > |u|_{\Sigma} + |w|_{\Sigma} -|v|_{\Sigma},$$
    where by $|w|_{\Sigma}$ we mean the length of the reduced word $w$ over $\Sigma$. 
\end{enumerate}
Condition (\ref{nielsen1}) means that not more than a half of $u$ and not more than a half of $v$ freely cancels in the product $u\cdot v$. Condition (\ref{nielsen2}) means that assuming (\ref{nielsen1}) after free cancellation in the product $u\cdot v\cdot w$ at least one letter of $v$ will remain un-cancelled. See \cite{magnus2004combinatorial}. From (\ref{nielsen1}) and (\ref{nielsen2}) it follows that $S$ is a free basis of the subgroup $\langle S\rangle$ of $F_m.$

Nielsen was the first who proved that every non trivial subgroup of $F_m$ has a set of generators with properties (\ref{nielsen1}) and (\ref{nielsen2}). His argument was quite involved. A simpler proof is given in \cite{magnus2004combinatorial}. In Theorem 3.4 of \cite{magnus2004combinatorial}, it is shown that any minimal Schreier system of generators of subgroup $H$ of $F_m$ has the Nielsen property. See also Proposition 6.7 in \cite{MR1882114kapovich2002} or \cite{MR1812024lyndon2001}.

Let us recall briefly how to get a Nielsen system geometrically. Let $H$ be a subgroup of $F_m$ and $\Gamma$ be the corresponding Schreier graph. Recall that labels of edges of $\Gamma$ belong to the set $A = \{a_1,\cdots,a_m\}$. Let $T$ be the spanning tree in $\Gamma$. The set of vertices of $T$ is same as the set of vertices $V$ of $\Gamma$. The tree $T$ is obtained from $\Gamma$ by deletion of some edges. Let $E'$ be the set of deleted edges. With each edge $e \in E'$ we associate an element $w_e$ of $H$ which is the word $\widehat{\mu}(p)$, where $p$ is the unique path in $\Gamma$ as described in Figure (\ref{fig:schreier_generator}).
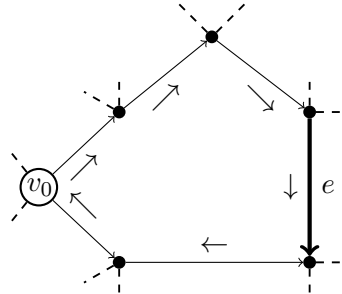
\begin{figure}[!htb]
    \centering
    \begin{tikzpicture}[scale=1]
    \tikzstyle{knode}=[circle,draw=black,thick,text
width = 1.5 pt,align=center,inner sep=1pt,fill]
\tikzstyle{rnode}=[circle,draw=black,thick,text
width = 9 pt,align=center,inner sep=1pt]
\node (q0) at (-2.8,0) [rnode]{$v_0$} ;
\node (q3) at (-1.7320,-1) [knode] {};
\node (q4) at (-1.7320,1) [knode] {};
\node (q5) at (-.5,2) [knode] {};
\node (q6) at (.8,1) [knode] {};
\node (q7) at (.8,-1) [knode] {};
\draw[->] (q0) to node[below] {$\nearrow$} (q4);
\draw[->] (q4) to node[below] {$\nearrow$} (q5);
\draw[->] (q5) to node[below] {$\searrow$} (q6);
\draw[->] (q0) to node[above] {$\nwarrow$} (q3);
\draw[->] (q3) to node[above] {$\leftarrow$}(q7);
\draw[->,ultra thick] (q6) to node[left] {$\downarrow$}node[right] {$e$} (q7);
\draw[-,dashed] (q5) edge (-1,2.5)
        (q5) edge (0,2.5)
        ;
\draw[-,dashed] (q0) edge (-3.2,-0.5)
         (q0) edge (-3.2,0.5)
         ;
\draw[-,dashed] (q3) edge (-2.2,-2.2*0.5773)
       (q3) edge (-1.7320,-1.5)
       ;
\draw[-,dashed] (q4) edge (-2.2,2.2*0.5773)
       (q4) edge (-1.7320,1.5)
       ;
\draw[-,dashed] (q6) edge (.8,1.5)
       (q6) edge (1.3,1)
       ;
\draw[-,dashed] (q7) edge (.8,-1.5)
       (q7) edge (1.3,-1)
       ;
    \end{tikzpicture}
    \caption{A Schreier generator}
    \label{fig:schreier_generator}
\end{figure}
That means, we connect the root $v_0$ with $o(e)$ by a reduced path that goes through the edges in $T$, then the path $p$ goes along the edge $e$ and after that goes from $t(e)$ to $v_0$ moving through the edges of $T$ in opposite direction (thus the return path $t(e) \longrightarrow v_0$ goes in fact through the edges in the extended Schreier graph $\widehat{\Gamma}$).

It is obvious that $w_e$ belongs to $H$. It is not obvious but the result of Schreier is that, the set $S = \{w_e\}_{e \in E'}$ is a free basis of $H$. Such a basis is called Schreier basis. The choice of a spanning tree is not canonical and usually there are a plenty of such choices (hence plenty of choices for Schreier system of generators). Some of the choices of $T$ are better than others. A spanning tree $T$ is geodesic (or minimal) with respect to the root $v_0$ if for any vertex $v \in V$ the combinatorial distance from $v$ to $v_0$ in $\Gamma$ is the same as the distance from $v$ to $v_0$ in $T$. Here we assume that we convert both graphs $\Gamma$ and $T$ into non-oriented graphs (by forgetting the direction of each edge). In this case the combinatorial distance (i.e. the number of edges in the closest path connecting two vertices) is the metric.

It is known that a spanning tree in the connected locally finite graph always exists and there is an effective procedure to find such a tree if the graph itself is defined in an effective way. It is well-known that for any spanning tree the corresponding Schreier system $S = \{w_e\}_{e\in E'}$ satisfies the Nielsen properties (\ref{nielsen1}) and (\ref{nielsen2}). It is straightforward that $H$ is a finite index subgroup in $F_m$ if and only if the core $\Delta_H$ coincides with the Schreier graph $\Gamma$ of $H$ (or the extended core $\widehat{\Delta}_H$ coincides with the extended Schreier graph $\widehat{\Gamma}_H$ of $H$). So if $[F_m:H] < \infty$, then we can have geodesic spanning trees $T_{\Delta_H}, T_{\Gamma}$ of $\Delta_H$ and $\Gamma$, respectively that coincide (i.e. $T_{\Delta_H} = T_{\Gamma}$). If $[F_m:H] = \infty$, then we can have $T_{\Delta_H} \subset T_{\Gamma}$. In this case, observe that the subtree $T_{\Gamma}\backslash T_{\Delta_H}$ is disconnected and the corresponding connected components are the hanging branches of $\Gamma$. Therefore, similarly to the finite index case, if $[F_m:H] = \infty$, then in order to find the Nielsen generating set it is sufficient to consider the $T_{\Delta_H}$. Let $E_{\Delta_H}' = E_{\Delta_H} \backslash E(T_{\Delta_H})$ be the set of deleted edges of $\Delta_H$. Then $S = S',$ where $S' = \{w_e\}_{e\in E_{\Delta_H}'}$. The highlighted geodesic spanning tree $T_{\Delta_H}$ in $\Delta_H$ of Figure (\ref{fig:folded}) can be used to find a Nielsen generating set $\{aba^{-1}, aca^{-1}\}$ of $H$.

\section{The construction of $\D_H$ and $\widehat{\D}_H$, their properties and consequences}\label{S:5_Constr}
The first goal of this section is to construct the minimal DFA $\D_H$ that recognizes the language $L_H$ of reduced words of a finitely generated subgroup $H \leq F_m.$ We shall approach this construction by defining an unambiguous automaton $\A_H$ as the (Cartesian) product of two automata. Then we obtain the minimal DFA $\D_H$ as the essential part of $\A_H$. At the end we obtain the multi-initial state automaton $\widehat{\D}_H$ by replacing the initial state of $\D_H$ and provide the complete description  when $\widehat{\D}_H$ is ergodic. In Theorem \ref{thm-on-computing-entropy}, we utilize the ergodicity property of $\widehat{\D}_H$ to obtain an entropy formula for $L_H$.

Recall that a finite automaton $\A$ is {essential} if in its Moore diagram $G_{\A}$ every vertex (hence, also every edge) belongs to some path connecting an initial state to a final state, i.e. to an admissible path. The following lemma will be used later.
\begin{lem}
\label{lem-on-essential-part}
Every finite automaton $\A$ has an essential part. If $\A$ is an unambiguous finite automaton, then it has only one essential part, called \emph{the} essential part of $\A$. 
\end{lem}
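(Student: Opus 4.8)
The plan is to treat the two assertions separately: first construct a canonical candidate for the essential part, then show that under the unambiguity hypothesis every essential part must coincide with it.

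For existence, I would single out the set $Q_0 \subseteq Q$ of states lying on at least one admissible path; equivalently, $Q_0$ consists of the states that are simultaneously \emph{accessible} (reachable from some state of $\I$) and \emph{co-accessible} (from which some state of $\F$ is reachable), since concatenating a path $\I\to q$ with a path $q\to\F$ produces an admissible path through $q$. I would then let $\A_0$ be the subautomaton carried by $Q_0$, with $\I_0=\I\cap Q_0$, $\F_0=\F\cap Q_0$, and $\delta_0(q,a)=\delta(q,a)\cap Q_0$. Three checks remain. (i) $\A_0$ is genuinely a subautomaton, which is immediate from the definition. (ii) $\A_0$ is essential: any $q\in Q_0$ lies on an admissible path $p$ of $\A$, and every state of $p$ is again accessible and co-accessible, hence in $Q_0$, so $p$ survives as an admissible path of $\A_0$ through $q$. (iii) $L(\A_0)=L(\A)$: the inclusion $L(\A_0)\subseteq L(\A)$ holds because an admissible path of $\A_0$ is one of $\A$, while for the reverse any admissible path in $\A$ realizing a word $w$ passes only through states of $Q_0$ and so is already a path of $\A_0$ with origin in $\I_0$ and terminus in $\F_0$. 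This exhibits $\A_0$ as an essential part.

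For uniqueness, assuming $\A$ is unambiguous, I would fix an arbitrary essential part $\A'=(Q',\Sigma,\delta',\I',\F')$ and prove $\A'=\A_0$. One inclusion needs no ambiguity hypothesis: since $\A'$ is essential and a subautomaton, every $q\in Q'$ lies on an admissible path of $\A'$, which is also admissible in $\A$, so $q\in Q_0$; hence $Q'\subseteq Q_0$, $\I'\subseteq\I\cap Q_0$, and $\F'\subseteq\F\cap Q_0$. The opposite inclusions are where unambiguity enters, and this is the main obstacle. Given $q\in Q_0$, I would build an admissible path $p$ of $\A$ through $q$ and set $w=l(p)\in L(\A)=L(\A')$. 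Recognizing $w$ in $\A'$ yields an admissible path $p'$ of $\A'$ with $l(p')=w$; but $p'$ is also an admissible path of $\A$, so unambiguity of $\A$ forces $p'=p$. Consequently every state and every edge of $p$, in particular $q$, belongs to $\A'$, giving $Q_0\subseteq Q'$.

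Finally, I would upgrade this to equality of the full quintuples. Choosing $p$ above to \emph{start} at $q$ when $q\in\I\cap Q_0$ (respectively to \emph{end} at $q$ when $q\in\F\cap Q_0$) and again invoking $p'=p$ shows $o(p')=q\in\I'$ (respectively $t(p')=q\in\F'$), so $\I'=\I\cap Q_0$ and $\F'=\F\cap Q_0$. Since the subautomaton relation then forces $\delta'(q,a)=\delta(q,a)\cap Q'=\delta(q,a)\cap Q_0$, the transition function of $\A'$ agrees with that of $\A_0$, and therefore $\A'=\A_0$. The only delicate bookkeeping I anticipate is the degenerate case $\epsilon\in L(\A)$, where the relevant admissible path through a state that is both initial and final has length zero; here unambiguity guarantees there is exactly one such state, so the same argument applies verbatim.
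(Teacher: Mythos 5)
Your proposal is correct and follows essentially the same route as the paper: the canonical candidate (states/edges lying on admissible paths, i.e.\ accessible and co-accessible) is exactly the paper's ``union of all admissible paths'' subautomaton, and your uniqueness argument via the unique accepting path for each $w\in L(\A)$ is precisely the justification behind the paper's terser claim that no proper subautomaton of $\A'$ recognizes $L(\A)$ while every essential subautomaton sits inside $\A'$. You simply spell out the bookkeeping (both inclusions, the initial/final-state identification, and the empty-word case) that the paper leaves implicit.
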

\begin{proof}
    Define $\A'$ to be the subautomaton of $\A$ such that $G_{\A'}$ is the union of all admissible paths of $G_{\A}$. Then, clearly, $L(\A')=L(\A)$ and $\A'$ is essential, hence $\A'$ is an essential part of $\A$.
    
    If $\A$ is unambiguous, then no proper subautomaton of $\A'$ generates the language $L(\A)$.
Also, if a vertex in $G_{\A}$ does not belong to some admissible path in $G_{\A}$, then, by definition, it will not belong to any essential subautomaton of $\A$. Thus, if $\A$ is unambiguous, then $\A'$ is the only essential part of $\A$.
\end{proof}
\subsection{The automaton $\A_H$}\label{S:5.1_A_H}
One of the important closure properties of regular languages is that the intersection of two regular languages is regular. See Theorem 3.3 on page 59 of \cite{MR645539Hopcroft1979}. A DFA recognizing the intersection of two regular languages can be constructed as follows. Let $L_1, L_2  \subseteq \Sigma^*$ be regular languages. Also, let $\A_1 = (Q_1,\Sigma,\delta_1,\{q_0\},\F_1)$ and $\A_2 = (Q_2,\Sigma,\delta_2,\{q_0'\},\F_2)$ be two DFA such that $L_1 = L(\A_1)$ and $L_2 = L(\A_2).$ Define the product $\A_1\times \A_2$ of $\A_1$ and $\A_2$ as follows.  $$ \A_1\times \A_2 = \left(Q_1\times Q_2, \Sigma, \delta, \{(q_0,q_0')\}, \F_1\times \F_2\right)$$ such that for all $q \in Q_1, q' \in Q_2$ and $x \in \Sigma$, we define 
    $$\delta((q,q'),x) = \left(\delta_1(q,x),\delta_2(q',x)\right) \textrm{  if  } \delta_1(q,x) \neq \emptyset, \delta_2(q',x)\neq \emptyset, \textrm{  and }$$
    $$\delta((q,q'),x) = \emptyset, \textrm{ otherwise. }$$ 
Then $\A_1\times \A_2$ is a DFA such that  $L(\A_1\times \A_2) = L_1\cap L_2$.

To construct a DFA $\A_{H}$ that recognizes the language $L_H$ of reduced words of $H$, we shall consider first the automaton $\A_1=\A_{\widehat{\Delta}_H} = \left(\widehat{V}, \Sigma, \delta_{\widehat{\Delta}_H}, \{v_0\}, \{v_0\}\right)$ discussed in Section 3. Recall that the Moore diagram of $\A_{\widehat{\Delta}_H}$ is the extended core $\widehat{\Delta}_H.$

We take a second automaton $\A_2 = \A_{F_m} = \left(Q_{F_m},\Sigma, \delta_{F_m}, \{q_0\}, \F_{F_m}\right),$ where the set of states and of final states are both equal to $$Q_{F_m} = \F_{F_m} = \{q_0\} \cup \left\{q_i^{\e}\mid a_i^{\e} \in \Sigma\right\}$$ and the transition function $\delta_{F_m}: Q_{F_m}\times\Sigma \rightarrow Q_{F_m}$ is 
\begin{eqnarray}
    \delta_{F_m}(q_0,a_j^{\e'}) = q_j^{\e'},& \textnormal{ for all } a_j^{\e'} \in \Sigma, \label{eqn:Fm_q0_transitions}\\
    \delta_{F_m}(q_i^{\e},a_j^{\e'}) = q_j^{\e'},& \textnormal{ if } a_i^{\e} \neq (a_j^{\e'})^{-1}. \label{eqn:Fm_qie_transitions}
\end{eqnarray}
See Figure \ref{fig:A_F2} for the Moore diagram of $\A_{F_2}$. We define $\A_H = \A_{\widehat{\Delta}_H} \times \A_{F_m} $. Namely,
$$\A_{H} = \A_{\widehat{\Delta}_H} \times \A_{F_m} = \left(Q_{H}, \Sigma, \delta_{H}, (v_0,q_0), \F_{H}\right),$$ where $$Q_{H} = \widehat{V}\times Q_{F_m} = \left\{(v,q) \mid v \in \widehat{V} \textrm{ and }q \in Q_{F_m} \right\},$$ 
$$\F_{H} = \{v_0\}\times \F_{F_m},$$ and 
\begin{align*}
    \delta_{H}((v_0,q_0),a_j^{\e'}) = \left(\delta_{\widehat{\Delta}_H}(v_0,a_j^{\e'}),\delta_{F_m}(q_0,a_j^{\e'})\right),
\end{align*}\text{ in case $\delta_{\widehat{\Delta}_H}(v_0,a_j^{\e'})\neq \emptyset$, $\delta_{F_m}(q_0,a_j^{\e'}) \neq \emptyset.$} Otherwise, $\delta_{H}((v_0,q_0),a_j^{\e'})=\emptyset.$

\begin{align*}
    \delta_{H}((v,q_i^{\e}),a_j^{\e'}) = \left(\delta_{\widehat{\Delta}_H}(v,a_j^{\e'}),\delta_{F_m}(q_i^{\e},a_j^{\e'})\right), 
\end{align*}
\text{in case $\delta_{\widehat{\Delta}_H}(v,a_j^{\e'})\neq \emptyset,\delta_{F_m}(q_i^{\e},a_j^{\e'}) \neq \emptyset.$} Otherwise, $\delta_{H}((v,q_i^{\e}),a_j^{\e'})=\emptyset.$

{\rem \label{rem:partial/complete_H} $\A_H$ inherits from $\A_{F_m}$ and $\A_{\widehat{\Delta}_H}$ the property of being deterministic and having only one initial state. In particular, $\A_H$ is an unambiguous automaton.} 

Recall that $L_H \subseteq \Sigma^*$ denotes the set of reduced words that represent elements of $H \leq F_m$.
\begin{prop}
$L(\A_{H})=L_H$.
\end{prop}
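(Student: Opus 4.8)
The plan is to exploit the fact, recorded immediately before the statement, that the product of two DFA recognizes the intersection of their languages. Both factor automata here are DFA (the automaton $\A_{\widehat{\Delta}_H}$ is a DFA by construction in Section 3, and $\A_{F_m}$ is deterministic with the single initial state $q_0$), so since $\A_H$ is defined as $\A_{\widehat{\Delta}_H} \times \A_{F_m}$ this product theorem gives at once
$$L(\A_H) = L(\A_{\widehat{\Delta}_H}) \cap L(\A_{F_m}).$$
The whole argument then reduces to identifying these two languages and intersecting them.

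First I would pin down $L(\A_{F_m})$, showing that it is exactly the set of freely reduced words over $\Sigma$. The key observation is that the state reached after reading a nonempty prefix records only its last letter: reading $a_j^{\e'}$ always lands in $q_j^{\e'}$, whether one departs from $q_0$ (by \eqref{eqn:Fm_q0_transitions}) or from an admissible $q_i^{\e}$ (by \eqref{eqn:Fm_qie_transitions}). The only way a run can die is when rule \eqref{eqn:Fm_qie_transitions} forbids the transition, which happens precisely when the incoming letter is the inverse of the previous one. Hence the unique run on a word $w$ survives to a state of $Q_{F_m}$ — and every such state is final — if and only if $w$ contains no adjacent cancelling pair $aa^{-1}$, i.e. if and only if $w$ is reduced. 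This establishes $L(\A_{F_m}) = \{\text{freely reduced words over } \Sigma\}$.

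Next I would invoke Theorem \ref{Theorem:lang_extended_core}, which gives $L_H \subseteq L(\A_{\widehat{\Delta}_H})$ together with the fact that every word of $L(\A_{\widehat{\Delta}_H}) \setminus L_H$ fails to be reduced. Intersecting $L(\A_{\widehat{\Delta}_H})$ with the reduced words therefore deletes exactly the non-reduced surplus while retaining all of $L_H$ (which consists of reduced words by its very definition). Formally this yields $L(\A_{\widehat{\Delta}_H}) \cap L(\A_{F_m}) = L_H$, and combined with the displayed identity it gives $L(\A_H) = L_H$.

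The argument is short, and the only place demanding genuine care is the verification that $\A_{F_m}$ recognizes precisely the reduced words; this is the sole step where one actually inspects the transition rules \eqref{eqn:Fm_q0_transitions}--\eqref{eqn:Fm_qie_transitions} rather than quoting a prior result. Everything else is formal: the product/intersection theorem is given, and Theorem \ref{Theorem:lang_extended_core} supplies the needed description of $L(\A_{\widehat{\Delta}_H})$ relative to $L_H$. One small boundary case worth noting is the empty word, which is reduced, lies in $L_H$ (since $1 \in H$), and is accepted by both factors because $q_0$ and $v_0$ are final — so nothing is lost at the trivial end of the intersection.
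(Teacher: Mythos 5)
Your proposal is correct and follows essentially the same route as the paper: both reduce the claim to $L(\A_H)=L(\A_{\widehat{\Delta}_H})\cap L(\A_{F_m})$ via the product construction, identify $L(\A_{F_m})$ with the set of freely reduced words, and then apply Theorem \ref{Theorem:lang_extended_core}. The only difference is that you spell out the verification that $\A_{F_m}$ accepts exactly the reduced words, which the paper asserts without detail.
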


\begin{proof}
Recall that $L(\A_{H}) = L(\A_{\widehat{\Delta}_H})\bigcap L(\A_{F_m})$. Also $L_H \subseteq L(\A_{\widehat{\Delta}_H})\bigcap L(\A_{F_m})$, and $L(\A_{\widehat{\Delta}_H}) \setminus L_H$ consists of only non-reduced words, whereas $L(\A_{F_m})$ consists of only reduced words. Therefore, $L(\A_{H})=L_H$. 
\end{proof}
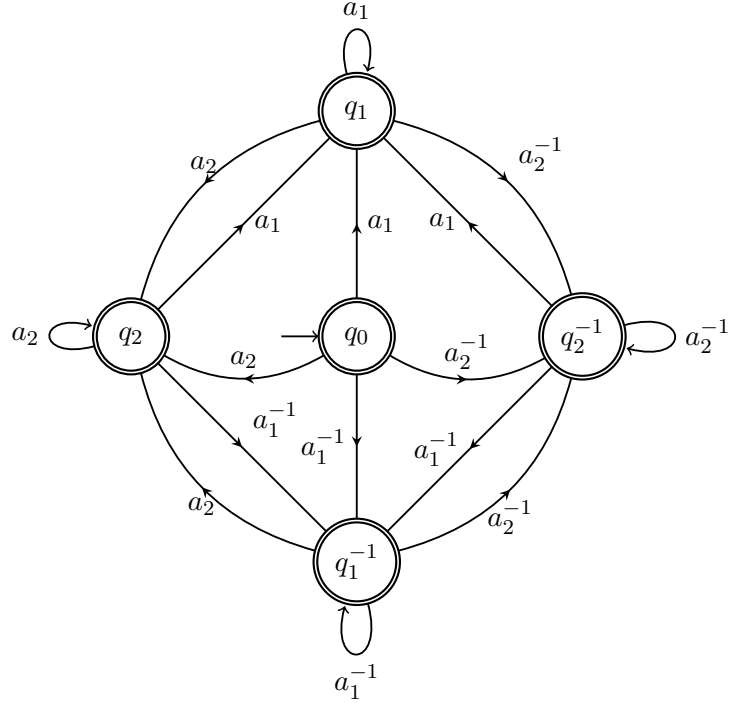
\begin{figure}[!htb]
    \centering
           \begin{tikzpicture}[scale=1,decoration={markings, mark= at position 0.5 with {\arrow{stealth}}}]
        \node[state, accepting, initial] (q0) {$q_0$};
        \node[state, accepting, above of=q0] (qa) {$q_1$};
        \node[state, accepting, below of=q0] (qa-) {$q_1^{-1}$};
        \node[state, accepting, left of=q0] (qb) {$q_2$};
        \node[state, accepting, right of=q0] (qb-) {$q_2^{-1}$};
       
        \draw  (q0) edge[above,postaction={decorate}] node[right]{$a_1$} (qa)
               (q0) edge[below,postaction={decorate}] node[left]{$a_1^{-1}$} (qa-)
               (q0) edge[bend left,postaction={decorate}] node[above]{$a_2$} (qb)
               (q0) edge[bend right,postaction={decorate}] node[above]{$a_2^{-1}$} (qb-)
               
               (qa) edge[loop above] node[above]{$a_1$} (qa)
               (qa) edge[bend right,postaction={decorate}] node[above]{$a_2$} (qb)
               (qa) edge[bend left,postaction={decorate}] node[above right]{$a_2^{-1}$} (qb-)

               (qa-) edge[loop below] node[below]{$a_1^{-1}$} (qa-)
               (qa-) edge[bend left,postaction={decorate}] node[below]{$a_2$} (qb)
               (qa-) edge[bend right,postaction={decorate}] node[below]{$a_2^{-1}$} (qb-)
               
               (qb) edge[loop left] node[left]{$a_2$} (qb)
               (qb) edge[above right,postaction={decorate}] node[right]{$a_1$} (qa)
               (qb) edge[below right,postaction={decorate}] node[above right]{$a_1^{-1}$}
               (qa-)
               
               (qb-) edge[loop right] node[right]{$a_2^{-1}$} (qb-)
               (qb-) edge[above left,postaction={decorate}] node[left]{$a_1$} (qa)
               (qb-) edge[below left,postaction={decorate}] node[left]{$a_1^{-1}$}
               (qa-);
    \end{tikzpicture}
    \caption{The Moore diagram of $\A_{F_2}$}
    \label{fig:A_F2}
\end{figure}
\subsection{Definition and main properties of $\D_H$}\label{S:5.2_D_H}
Define $Q_{\D_H}\subseteq \widehat V \times Q_{F_m}$ as 
$$Q_{\D_{H}} = \{(v_0, q_0)\}\cup\left\{(v,q_i^{\e}) \mid v \in \widehat{V} ~and~ \exists e \in \widehat{E} ~s.t.~ a_i^{\e} = \widehat{\mu}(e),t(e)=v\right\}.$$
In other words, $Q_{\D_{H}}$ is the set of accessible states of $\A_H$.

\begin{lem}\label{lem-00}\hfill  %% \hfill pushes the first item to a new line
\begin{enumerate}
\item If there is a path $p$  in $\A_{\widehat{\Delta}_H}$ with a reduced label $w$ such that $o(p)=v_0$ and $t(p)=v \in \widehat V$, and the suffix of $w$ is $a_i^{\e}$, then $(v,q_i^{\e}) \in Q_{\D_{H}}$.
\item If $(v,q_i^{\e}) \in Q_{\D_{H}}$, then there exists a reduced word $w_1a_i^{\e}w_2\in L_H$ such that $\delta_H(v_0, w_1a_i^{\e})= v $. 
\end{enumerate}
\end{lem}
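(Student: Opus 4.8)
The plan is to derive (1) straight from the definition of $Q_{\D_H}$, and to obtain (2) from Property (\ref{lem-euler-path}) of Lemma \ref{lem:properties_core} together with the definition of the product automaton $\A_H=\A_{\widehat{\Delta}_H}\times\A_{F_m}$.

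For (1) I would simply isolate the last edge of the path. Writing $p=e_1\cdots e_n$ with $o(e_1)=v_0$ and $t(e_n)=v$, the suffix hypothesis says that the final letter of $w=\widehat{\mu}(p)$ is $\widehat{\mu}(e_n)=a_i^{\e}$. Thus $e_n\in\widehat E$ is an edge with $\widehat{\mu}(e_n)=a_i^{\e}$ and $t(e_n)=v$, which is exactly the witness demanded in the definition of $Q_{\D_H}$; hence $(v,q_i^{\e})\in Q_{\D_H}$. Reducedness of $w$ is not actually needed for this implication; it is recorded only because (1) is meant to capture the states of $\A_H$ reachable along reduced labels.

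For (2), given $(v,q_i^{\e})\in Q_{\D_H}$ the definition hands me an edge $e\in\widehat E$ with $\widehat{\mu}(e)=a_i^{\e}$ and $t(e)=v$. I would then invoke Property (\ref{lem-euler-path}) of Lemma \ref{lem:properties_core} to get a reduced path $p$ in $\widehat{\Delta}_H$ with $o(p)=t(p)=v_0$ traversing $e$ exactly once, and factor it at that occurrence as $p=p_1\,e\,p_2$, where $p_1$ runs from $v_0$ to $o(e)$ and $p_2$ runs from $t(e)=v$ back to $v_0$. Setting $w_1=\widehat{\mu}(p_1)$ and $w_2=\widehat{\mu}(p_2)$ gives $\widehat{\mu}(p)=w_1a_i^{\e}w_2$; since $p$ is reduced this word is reduced, and since $p$ is a loop at $v_0$ it represents an element of $H$, so $w_1a_i^{\e}w_2\in L_H$. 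To finish I would verify the transition: $w_1a_i^{\e}$ is a prefix of the reduced word $w_1a_i^{\e}w_2$, hence is itself reduced, so it is read by $\A_{F_m}$ and, ending in $a_i^{\e}$, drives it into $q_i^{\e}$; meanwhile $p_1e$ is a path in $\widehat{\Delta}_H$ from $v_0$ to $v$ carrying the same label. By the product construction these combine to $\delta_H((v_0,q_0),w_1a_i^{\e})=(v,q_i^{\e})$, whose first coordinate is $v$, which is the asserted $\delta_H(v_0,w_1a_i^{\e})=v$.

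I expect essentially all the genuine content to sit in the cited Property (\ref{lem-euler-path}) — the existence of a reduced closed path at $v_0$ running through a prescribed edge exactly once. Granting that, the remaining steps are routine bookkeeping: splitting the path at $e$, using that a prefix of a reduced word is reduced, and reading off the behavior of the product automaton. As a by-product, the argument also shows that each state of $Q_{\D_H}$ is accessible (via $w_1a_i^{\e}$) and co-accessible (since $w_1a_i^{\e}w_2\in L_H=L(\A_H)$), which, combined with (1), is precisely what underlies the earlier remark that $Q_{\D_H}$ is the set of accessible states of $\A_H$.
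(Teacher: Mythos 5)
Your proposal is correct and follows essentially the same route as the paper: part (1) from the explicit definition of $Q_{\D_H}$ via the last edge of the path, and part (2) by invoking property (\ref{lem-euler-path}) of Lemma \ref{lem:properties_core} to produce a reduced loop $p_1\,e\,p_2$ at $v_0$ and reading off the product transition. The only cosmetic difference is that the paper's proof of (1) also records the lift of the entire path to $\A_H$ and reuses that in (2), whereas you re-verify the transition $\delta_H((v_0,q_0),w_1a_i^{\e})=(v,q_i^{\e})$ directly inside (2); nothing is lost either way.
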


\proof\hfill  %% \hfill pushes the first item to a new line
\begin{enumerate}
\item
 Let $w=a_{j_1}^{\epsilon_1} a_{j_2}^{\epsilon_2}  \ldots a_{j_{l}}^{\epsilon_{l}} \in L_H$ such that $a_{j_{l}}^{\epsilon_{l}} = a_i^{\e}$. Suppose that the path $v_0 \xrightarrow{a_{j_{1}}^{\epsilon_{1}}} v_1 \xrightarrow{a_{j_{2}}^{\epsilon_{2}}} \ldots \xrightarrow{a_{j_{l}}^{\epsilon_{l}}} v_l=v $ in $\A_{\widehat{\Delta}_H}$ with label $w$ and starting at $v_0$ terminates at $v \in \widehat{V}$.
Then, by definition of $\A_H$, the  path
$$(v_0, q_0) \xrightarrow{a_{j_{1}}^{\epsilon_{1}}} (v_1, q_{j_{1}}^{\epsilon_{1}}) \xrightarrow{a_{j_{2}}^{\epsilon_{2}}} \ldots \xrightarrow{a_{j_{l}}^{\epsilon_{l}}} (v_{l}, q_{j_{l}}^{\epsilon_{l}})=(v,q_i^{\e})$$ 
is well defined in $\A_H$ and has the label $w$. Also, by definition of $Q_{\D_{H}}$, $(v_{l}, q_{j_{l}}^{\epsilon_{l}}) \in Q_{\D_{H}}$. %This finishes the proof of part (1).
\item
Now assume that $(v,q_i^{\e}) \in Q_{\D_{H}}$. Then, by definition, there exists $e \in \widehat E$ such that $t(e)=v$ and $\hat{\mu}(e)=a_i^{\e}$. Assume that $v'=o(e) \neq v_0$. Then, since by Lemma \ref{lem:properties_core}, the number of incoming edges for $v'$ is at least two, there exists $e' \in \widehat E$ such that $t(e')=v'$ and its label $a_{i'}^{\e'}$ is different from $a_i^{\e}$. Therefore, $(v',q_{i'}^{\e'}) \in Q_{\D_{H}}$. 

Note that, by Lemma \ref{lem-euler-path}, $\widehat{\Delta}_H$  has an admissible path $p=p'ep''$ with reduced label such that $t(p')=v'$ and $o(p'')=v$. As the discussion in the proof of part (1) shows, $p$ corresponds to a path $\bar p$ in $\A_H$ with the same label as $p$. Now, if the label of $p'$ is $w_1$ and the label of $p''$ is $w_2$, then the label of $\bar p$ is $w_1 a_{i}^{\e} w_2$. By part (1) of the lemma, the sub-path of $\bar{p}$ with label $w_1 a_{i}^{\e}$ will terminate at $v$.  Therefore, by part (1) of the lemma, $\delta_H(v_0, w_1a_i^{\e})= v $. %This finishes the proof of the lemma.
\qed\end{enumerate}
\begin{cor}
\label{corollary-admissible-path}
$(v,q_i^{\e}) \in Q_{\D_{H}}$ if and only if there is an admissible path in $\A_H$ that contains $(v,q_i^{\e})$ and whose vertices belong to $Q_{\D_{H}}$.
\end{cor}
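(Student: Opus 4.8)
The plan is to establish the two implications separately. The reverse implication will be immediate, and essentially all the content sits in the forward implication, where I would lean on part~(2) of Lemma~\ref{lem-00}.

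For the reverse direction, I would simply observe that if there exists an admissible path in $\A_H$ through $(v,q_i^{\e})$ all of whose vertices lie in $Q_{\D_H}$, then $(v,q_i^{\e})$ is itself one of those vertices, so it belongs to $Q_{\D_H}$ with nothing further to prove.

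For the forward direction, I would start from $(v,q_i^{\e})\in Q_{\D_H}$ and apply Lemma~\ref{lem-00}(2) to obtain a reduced word $w_1a_i^{\e}w_2\in L_H$ whose prefix $w_1a_i^{\e}$ drives $\A_H$ from the initial state $(v_0,q_0)$ to the state $(v,q_i^{\e})$. Since $L_H=L(\A_H)$ and $\A_H$ is a DFA by Remark~\ref{rem:partial/complete_H}, the word $w_1a_i^{\e}w_2$ is read along a unique admissible path $\bar p$: it begins at $(v_0,q_0)$, passes through $(v,q_i^{\e})$ after the prefix $w_1a_i^{\e}$, and terminates at a state of $\{v_0\}\times\F_{F_m}$ because the word is accepted. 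This already exhibits an admissible path containing $(v,q_i^{\e})$.

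It then remains to check that every vertex of $\bar p$ lies in $Q_{\D_H}$, which I would verify directly from the explicit description of $Q_{\D_H}$. The initial vertex is $(v_0,q_0)\in Q_{\D_H}$ by definition, and every other vertex of $\bar p$ has the form $(u,q_j^{\e'})$ and is entered along an edge of $\A_H$ with label $a_j^{\e'}$; projecting that edge onto $\widehat{\Delta}_H$ produces an edge $e\in\widehat{E}$ with $\widehat{\mu}(e)=a_j^{\e'}$ and $t(e)=u$, so $(u,q_j^{\e'})\in Q_{\D_H}$ by the defining condition of $Q_{\D_H}$. The main (and essentially only) delicate point will be the bookkeeping that identifies the accepting run of $w_1a_i^{\e}w_2$ with a single admissible path through $(v,q_i^{\e})$; once the single-initial-state determinism of $\A_H$ is invoked this identification is forced, and the remainder reduces to reading off the two characterizations of $Q_{\D_H}$.
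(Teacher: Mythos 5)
Your proof is correct and follows essentially the same route as the paper's: the forward implication rests on Lemma \ref{lem-00}(2), and the reverse implication is immediate. The only difference is one of detail --- the paper treats the fact that every vertex of the accepting path lies in $Q_{\D_H}$ as immediate from the identification of $Q_{\D_H}$ with the accessible states of $\A_H$, whereas you verify it explicitly from the defining description of $Q_{\D_H}$.
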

\begin{proof}
    The `only if' part follows immediately from part (2) of Lemma \ref{lem-00}. For the `if' part, note that, by definition, any state that is on some admissible path in $\A_H$ is accessible, hence belongs to $Q_{\D_{H}}$.
\end{proof}

Define  $\D_{H}$ to be the subautomaton of $\A_{H}$ induced by the states $Q_{\D_H}$.

\begin{prop}
\label{prop-D_H-is-essntial}
$\D_{H}$  is the essential part of $\A_H = \A_{\widehat{\Delta}_H} \times \A_{F_m}$. In particular, $L_H = L(\D_H)$.
\end{prop}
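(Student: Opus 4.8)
The plan is to show that $\D_H$ coincides with the (unique) essential part of $\A_H$, from which $L_H = L(\D_H)$ follows immediately. First I would invoke Remark~\ref{rem:partial/complete_H}: since $\A_H$ is unambiguous, Lemma~\ref{lem-on-essential-part} guarantees that it has exactly one essential part $\A'$, whose Moore diagram $G_{\A'}$ is the union of all admissible paths of $G_{\A_H}$ and which satisfies $L(\A')=L(\A_H)=L_H$. Thus it suffices to prove that $\D_H=\A'$ as subautomata of $\A_H$, i.e. that the states and edges of $\D_H$ are exactly the states and edges lying on admissible paths of $\A_H$.

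For the states this is essentially Corollary~\ref{corollary-admissible-path}: a state $(v,q_i^{\e})$ lies on an admissible path of $\A_H$ if and only if $(v,q_i^{\e})\in Q_{\D_{H}}$, the extra requirement that the path stay inside $Q_{\D_{H}}$ being automatic, since every vertex of an admissible path is accessible and hence belongs to $Q_{\D_{H}}$ by definition. Consequently the vertex sets of $\D_H$ and $\A'$ agree. For the edges, one inclusion is free: any edge on an admissible path of $\A_H$ has both endpoints accessible, so both lie in $Q_{\D_{H}}$ and the edge survives in the induced subautomaton $\D_H$.

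The remaining inclusion, that every edge of $\D_H$ lies on some admissible path, is where the real work sits. Given an edge of $\D_H$ from $(v,q_i^{\e})$ to $(v',q_j^{\e'})$ carrying label $a_j^{\e'}$, I would first apply part~(2) of Lemma~\ref{lem-00} to the state $(v,q_i^{\e})\in Q_{\D_{H}}$ to obtain a reduced word $w_1a_i^{\e}$ whose reading in $\A_H$ ends at $(v,q_i^{\e})$. The existence of the edge forces the transition $\delta_{F_m}(q_i^{\e},a_j^{\e'})=q_j^{\e'}$ to be defined, i.e. $a_i^{\e}\neq (a_j^{\e'})^{-1}$, so that $w_1a_i^{\e}a_j^{\e'}$ is still reduced and its reading lands at $(v',q_j^{\e'})$ through the given edge. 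Applying part~(2) of Lemma~\ref{lem-00} once more to $(v',q_j^{\e'})\in Q_{\D_{H}}$ supplies a reduced suffix $w_3$ with $a_j^{\e'}w_3$ reduced and reading $(v',q_j^{\e'})$ back to the final state at $v_0$; concatenating yields a reduced word $w_1a_i^{\e}a_j^{\e'}w_3\in L_H$ whose admissible path traverses the edge. Property~\ref{lem-euler-path} of Lemma~\ref{lem:properties_core} could be used in place of this last step, routing a reduced closed path at $v_0$ through the underlying edge of $\widehat{\Delta}_H$.

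Having matched both vertices and edges, I conclude $\D_H=\A'$, so $\D_H$ is essential and $L(\D_H)=L(\A_H)=L_H$. The main obstacle I anticipate is precisely this edge-completeness step: Corollary~\ref{corollary-admissible-path} is phrased only for states, so one must ensure that the words produced by Lemma~\ref{lem-00} can be glued without creating a backtracking $a a^{-1}$ at the splice points. This is exactly what the constraint $a_i^{\e}\neq(a_j^{\e'})^{-1}$, coming from the product construction with $\A_{F_m}$, provides.
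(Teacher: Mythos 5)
Your proposal is correct and follows essentially the same route as the paper: both rest on Lemma~\ref{lem-on-essential-part} together with the unambiguity of $\A_H$ for uniqueness of the essential part, on Corollary~\ref{corollary-admissible-path} for the states, and on Lemma~\ref{lem-00} for relating $L(\D_H)$ to $L_H$. The extra edge-completeness verification you carry out is sound but not strictly needed, since an edge whose origin is accessible and whose terminus lies on an admissible path automatically lies on an admissible path by concatenation (the non-backtracking condition being already built into the transitions of $\A_H$).
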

\begin{proof}
    The fact that $\D_{H}$ is essential follows directly from Corollary \ref{corollary-admissible-path}. Also, from Lemma \ref{lem-00} and the fact that the set of reduced words in $L(\A_{\widehat{\Delta}_H})$ coincides with $L_H$ it follows that $L_H = L(\D_H)$. Combining this with Lemma \ref{lem-on-essential-part} and the fact that $\A_H$ is unambiguous, we get that $\D_{H}$ is the essential part of $\A_H$.
\end{proof}

{\rem \label{rem:partial/complete_H2} $\D_H$ inherits from $\A_{H}$  the properties of being deterministic and unambiguous. }

\noindent
\textbf{Automaton presentation of $\D_H$.} For expository reasons, in the sequel we replace the notation $q_i^{\epsilon}$ for a state of $\mathcal{D}_H$ by $a_i^{\epsilon}$, and denote the initial state $(v_0, q_0)$ simply by $q^*$. Thus $\D_H$ will have the following presentation: 
$$\D_{H} = \left(Q_{\D_{H}}, \Sigma, \delta_{\D_{H}}, \{q^*\}, \F_{\D_{H}}\right),$$ where
$$Q_{\D_{H}} = \{q^*\}\\\cup\left\{(v,a_i^{\e}) \mid v \in \widehat{V}, a_i^{\e} \in \Sigma, ~and~ \exists e \in \widehat{E} ~s.t.~ a_i^{\e} = \widehat{\mu}(e),t(e)=v\right\},$$
$$\F_{\D_{H}} = \{q^*\}\cup\left\{(v_0,a_i^{\e}) \mid \exists e \in \widehat{E} ~s.t.~ a_i^{\e} = \widehat{\mu}(e),t(e)=v_0\right\},$$ 
$$\delta_{\D_{H}}\left(q^*,a_i^{\e}\right) = (v_0a_i^{\e},a_i^{\e}), \textrm{ for all } a_i^{\e} \in \Sigma,$$
$$\delta_{\D_{H}}\left((v,a_i^{\e}),a_j^{\e'}\right) = \left(va_j^{\e'},a_j^{\e'}\right), \textrm{ if } a_i^{\e} \neq (a_j^{\e'})^{-1}.$$

\subsubsection{Minimality of $\D_{H}$}\label{S:5.2.1_Min_D_H}
The Myhill-Nerode Theorem (see Theorem \ref{NERODE:Theorem}) suggests the existence of a minimal DFA (which is unique up to isomorphism). 
For any given DFA $\A$, in order to construct the minimal DFA $\A'$ such that $L(\A) = L(\A')$, we need to define an equivalence relation $\equiv$ among states of $\A$.\\
We write $u \equiv v$ if the following holds: for all $w \in \Sigma^*$, $\delta(u, w)$ is a final state if and only if $\delta(v,w)$ is a final state. Notice that the relations $R_L$ (see \eqref{eqn:relation_R_L}) and $\equiv$ express exactly the same idea, i.e. 
\begin{equation}
     w R_L w'   \iff   \delta(q_0,w) \equiv \delta(q_0,w'). \label{eqn:fact}
\end{equation}
The states $u$ and $v$ of $\A$ are {\it equivalent} if $u \equiv v$. When the states $u$ and $v$ are not equivalent, then we say that they are {\it distinguishable}. That is, there exists at least one state $w$ such that one of $\delta(u, w)$ and $\delta(v, w)$ is an accepting state and the other is not.
\begin{thm}
$\D_{H}$ is a minimal DFA.
\end{thm}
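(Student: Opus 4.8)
The plan is to apply the Myhill--Nerode criterion (Theorem \ref{NERODE:Theorem}): since $\D_H$ is already a DFA with $L(\D_H)=L_H$ (Proposition \ref{prop-D_H-is-essntial} and Remark \ref{rem:partial/complete_H2}), it suffices to show that no two distinct states of $\D_H$ are equivalent under the relation $\equiv$, i.e. that all states are pairwise distinguishable. Equivalently, I must show that for any two distinct states $s, s' \in Q_{\D_H}$ there is a word $w \in \Sigma^*$ such that exactly one of $\delta_{\D_H}(s,w)$, $\delta_{\D_H}(s',w)$ lands in $\F_{\D_H}$. Once every pair is distinguishable, the number of states of $\D_H$ equals the number of equivalence classes of $R_{L_H}$, which by Theorem \ref{NERODE:Theorem} is exactly the size of the minimal DFA, forcing $\D_H$ to be minimal.

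First I would exploit the very explicit structure of the states recorded in the automaton presentation of $\D_H$: a non-initial state is a pair $(v, a_i^{\e})$ with $v\in\widehat V$ and $a_i^{\e}\in\Sigma$ the label of an edge of $\widehat\Delta_H$ terminating at $v$, and the transition $\delta_{\D_H}((v,a_i^{\e}),a_j^{\e'})=(va_j^{\e'},a_j^{\e'})$ is defined precisely when $a_i^{\e}\neq (a_j^{\e'})^{-1}$, that is, when appending $a_j^{\e'}$ keeps the word reduced. The key observation is that reading a word $w$ from a state $(v,a_i^{\e})$ succeeds exactly when $a_i^{\e}w$ is reduced and traces a path in $\widehat\Delta_H$; it terminates in a final state iff that path ends at the root $v_0$. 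I would split the distinguishability argument according to which coordinate of the two states differs.

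For two states $(v,a_i^{\e})$ and $(v',a_{i'}^{\e'})$ with $v\neq v'$: by Lemma \ref{lem:properties_core}(\ref{lem-euler-path}), for each vertex there is a reduced closed path at $v_0$ through any chosen incoming edge, so I can find a reduced word $w$ read from $v$ back to $v_0$ (appendable to $a_i^{\e}$, i.e. with first letter $\neq (a_i^{\e})^{-1}$) that is \emph{not} a valid return-to-$v_0$ word from $v'$, using that the foldings make $\widehat\Delta_H$ a deterministic graph so distinct vertices are separated by their sets of reduced geodesic words to $v_0$; since $\widehat\Delta_H$ is deterministic, the reduced word read along the path from $v$ to $v_0$ leads from $v'$ either to a non-root vertex or to no vertex at all, giving distinguishability. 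For two states $(v,a_i^{\e})$ and $(v,a_{i'}^{\e'})$ sharing the same vertex $v$ but with $a_i^{\e}\neq a_{i'}^{\e'}$: here the second coordinate records the last letter read, and it governs which continuations remain reduced. Because $a_i^{\e}\neq a_{i'}^{\e'}$, there is a letter $x\in\Sigma$ that is forbidden after exactly one of them (namely $x=(a_i^{\e})^{-1}$ is blocked from the first state but allowed from the second, or vice versa); I then prepend nothing and extend by a reduced word realizing a return to $v_0$ through $x$, witnessing that one state accepts a continuation the other rejects. The initial state $q^*$ is distinguished from every $(v,a_i^{\e})$ because $q^*$ accepts the empty word (it is final) while a valid witness separates it from any pair-state that is non-final, and from final pair-states by a nonempty reduced word.

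The main obstacle I anticipate is the same-vertex case and the careful bookkeeping of reducedness: I must guarantee that the separating word $w$, when prepended/appended, stays reduced relative to the recorded last letter in \emph{one} state but breaks (either by non-reducedness or by missing edge) in the \emph{other}, and that at least one of the two resulting runs genuinely reaches $v_0$ so as to hit a final state. This requires combining determinism of $\widehat\Delta_H$ (so paths are unique given their reduced labels) with property (\ref{lem-euler-path}) of Lemma \ref{lem:properties_core} (existence of reduced closed paths through any prescribed edge), and handling the degenerate situations around $v_0$ where $\deg(v_0)$ may be small. The rest of the verification is routine once these witnesses are produced.
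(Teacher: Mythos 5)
Your proof is correct, but it takes a genuinely different route from the paper's. Both arguments reduce to Myhill--Nerode, i.e.\ to showing that distinct states of $\D_H$ are distinguishable, but you do this by \emph{directly constructing} separating suffixes from the graph structure of $\widehat{\Delta}_H$: determinism of the folded core separates distinct vertices (a reduced word labelling paths from two vertices to $v_0$ would, read backwards from $v_0$, violate determinism --- an observation you invoke but should state explicitly), the forbidden letter $(a_i^{\e})^{-1}$ separates states over the same vertex with different last-letter coordinates, and Lemma \ref{lem:properties_core}(\ref{lem-euler-path}) supplies the reduced closed paths needed to complete each witness to an accepted word. The paper instead argues by contradiction using the subgroup structure: if $(u,a_i^{\e})$ and $(v,a_j^{\nu})$ were equivalent, essentiality gives prefixes $w_1, w_2$ reaching them and a common accepted suffix $w$, whence $w_1w(w_2w)^{-1}\in H$ forces $Hw_1=Hw_2$ and hence $u=v$; then $(w_2)^{-1}$ is used as a distinguishing suffix, since $w_1(w_2)^{-1}$ is reduced (the last letters differ) and lies in $L_H$ while $w_2(w_2)^{-1}$ is not reduced. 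The paper's argument is shorter and avoids your three-way case analysis entirely, at the cost of being less explicit about which witness separates which pair; yours makes the combinatorial mechanism of minimality transparent (it is exactly the foldedness of the core plus property (\ref{lem-euler-path})) but requires the extra reducedness bookkeeping and the degenerate cases at $v_0$ that you flag; these are routine to discharge and none of them breaks the argument.
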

\proof
In order to show that the DFA $\D_H$ is minimal, according to the Myhill-Nerode Theorem, it is enough to show that any two distinct states $(u,a_i^{\e})$ and $(v,a_j^{\nu})$ of $\D_H$ are distinguishable. 
We prove this by contradiction.

Assume that $(u,a_i^{\e}) \neq (v,a_j^{\nu})$ are  $R_L$-equivalent states. Then $\forall~w \in \Sigma^*$, $\delta_{\D_H}\left((u,a_i^{\e}), w\right)$ is a final state if and only if $\delta_{\D_H}\left((v,a_j^{\nu}), w\right)$ is a final state. Let us fix one such $w$ (the existence of $w$ follows from $\D_H$ being essential.) Since $\D_H$ is an essential automaton, there are $w_1$ and $w_2$ in $\Sigma^*$ such that $\delta_{\D_H}\left(q^*, w_1 \right) = (u,a_i^{\e})$ and $\delta_{\D_H}\left(q^*, w_2\right) = (v,a_i^{\nu})$. Therefore, $w_1w, w_2w \in L(\D_H) = L_H$, the suffix of $w_1$ is $a_i^{\e}$, the suffix of $w_2$ is $a_j^{\nu}$.  Then $w_1(w_2)^{-1} = (w_1w)(w_2w)^{-1} \in H$, which implies that $Hw_1=Hw_2$, and hence $u=v$. Therefore, it must be that $a_i^{\e} \neq a_j^{\nu}$, which, in particular, implies that $ w_1(w_2)^{-1} \in L_H$. As the states $(u,a_i^{\e})$ and $(v,a_j^{\nu})$ are equivalent, under the assumption that $(u,a_i^{\e})$ and $(v,a_j^{\nu})$ are $R_L$-equivalent, $w_1(w_2)^{-1} \in L_H$ implies $w_2(w_2)^{-1} \in L_H$, which is not true -- a contradiction. 
\qed

\subsection{Definition of $\widehat{\D}_H$}\label{S:5.3_hatD_H}
In this subsection we introduce the automaton $\widehat{\D}_H$, which is obtained from ${\D_H}$ by simply removing its initial state $q^*$ and proclaiming the final states $\F_{{\D_H}}$ of ${\D_H}$ at the same time the initial and final states of $\widehat{\D}_H$. Thus the automaton $\widehat{\D}_H$ would have the following presentation: $$\widehat{\D}_H = \left(Q_{\widehat{\D}_H}, \Sigma, \delta_{\widehat{\D}_H}, \I_{\widehat{\D}_H}, \F_{\widehat{\D}_H}\right),$$
where 
\begin{eqnarray}
    Q_{\widehat{\D}_H} = \left\{(v,a_i^{\e}) \mid v \in \widehat{V}, a_i^{\e} \in \Sigma, ~and~ \exists e \in \widehat{E} ~s.t.~ a_i^{\e} = \widehat{\mu}(e),t(e)=v\right\}, \nonumber \\
    \I_{\widehat{\D}_H} = \F_{\widehat{\D}_H} = \left\{(v_0,a_i^{\e})\mid a_i^{\e} = \widehat{\mu}(e),t(e)=v_0\right\}, \label{eqn:^D_H_initial_set}\\
    \delta_{\widehat{\D}_H}\left((v,a_i^{\e}),a_j^{\e'}) = (va_j^{\e'},a_j^{\e'}\right), \textrm{ if } a_i^{\e} \neq (a_j^{\e'})^{-1}. \label{eqn:^D_H_transition}
\end{eqnarray}
The main motivation behind consideration of the automaton $\widehat{\D}_H$ is that it will define the language $L_H$ and be ergodic whenever $L_H$ is irreducible, and at the same time will provide a convenient way of computing the entropy of $L_H$. All these will be revealed in detail in the next section. See, for example, Theorems \ref{thm-ergodicity}, \ref{thm-on-computing-entropy} and Proposition \ref{prop-when-L_H-is-irreducibl}.

To describe the main properties of $\widehat{\D}_H$, first we need the following definition.
\begin{defi}
$H\leq F_m$ is said to be \emph{conjugacy reduced} if there does not exist $a\in \Sigma$ such that every word from $L_H$ is of the form $a^{-1} w a$. In other words, $H$ is conjugacy reduced if there exist two words from $L_H$ with different last letters.
\end{defi}

\begin{prop}
\label{prop-1}
For $H\neq 1$, $\widehat{\D}_H$ has exactly one initial state if and only if $H$ is not conjugacy reduced. Moreover, in such case this state is isolated in the sense that there is no outgoing edge from it in the Moore diagram.
\end{prop}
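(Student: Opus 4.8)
The plan is to reduce both assertions to statements about the root vertex $v_0$ of the extended core $\widehat{\Delta}_H$ and the edges incident to it, and then to translate these into statements about the last letters of words in $L_H$. First I would count the initial states: by \eqref{eqn:^D_H_initial_set} the set $\I_{\widehat{\D}_H}$ is indexed by the \emph{distinct} labels $a_i^{\e}\in\Sigma$ appearing on edges $e\in\widehat E$ with $t(e)=v_0$, so the number of initial states is exactly the number of distinct labels of edges terminating at $v_0$. Since $\A_{\widehat{\Delta}_H}$ is deterministic, $v_0$ has at most one outgoing edge with any given label; combining this with the reversal bijection $e\mapsto\overline{e}$ (which exists because $\widehat{\Delta}_H$ is closed under edge reversal and sends a label to its inverse) shows that distinct incoming edges at $v_0$ carry distinct labels, and that their number equals $\deg^{-}(v_0)=\deg(v_0)$. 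Hence ``$\widehat{\D}_H$ has exactly one initial state'' is equivalent to ``$v_0$ has a single incoming-edge label,'' i.e.\ $\deg(v_0)=1$.

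Next I would identify the set of incoming-edge labels at $v_0$ with the set of last letters of nonempty words in $L_H$. One inclusion is immediate: a nonempty $w\in L_H$ is read by the unique reduced admissible path in $\A_{\widehat{\Delta}_H}$, which ends at $v_0$, so the last letter of $w$ labels an edge terminating at $v_0$. For the reverse inclusion, given an edge $e$ with $t(e)=v_0$ and label $a$, property~\ref{lem-euler-path} of Lemma~\ref{lem:properties_core} supplies a reduced closed path $p=\alpha e\beta$ at $v_0$ that traverses $e$; its prefix $\alpha e$ is again a closed path at $v_0$ (since $t(e)=v_0$), is reduced as a prefix of the reduced path $p$, is nonempty, and by Theorem~\ref{Theorem:lang_extended_core} its label lies in $L_H$ and ends in $a$. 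As $H\neq 1$ guarantees that $L_H$ contains a nonempty word, this set of last letters is nonempty. Combining with the first step and the ``in other words'' reformulation of the definition, we get: exactly one initial state $\iff$ a single incoming label at $v_0$ $\iff$ all nonempty words of $L_H$ have the same last letter $\iff$ there are \emph{no} two words in $L_H$ with different last letters $\iff$ $H$ is not conjugacy reduced.

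Finally, for the ``moreover'' clause I would argue that when $\deg(v_0)=1$ the unique incoming edge $e$ (with label $a$) has as its reversal $\overline{e}$ the \emph{unique} outgoing edge from $v_0$, whose label is $a^{-1}$. The sole initial state is then $(v_0,a)$, and by \eqref{eqn:^D_H_transition} an outgoing transition from it on a letter $b$ requires both an edge from $v_0$ labeled $b$ — forcing $b=a^{-1}$ — and the no-backtracking condition $a\neq b^{-1}$, which $b=a^{-1}$ violates; hence no edge leaves $(v_0,a)$ and the state is isolated. I expect the only genuinely delicate point to be the reverse inclusion in the middle step, namely producing a word of $L_H$ that \emph{ends} in a prescribed incoming label; the prefix-truncation of the path from property~\ref{lem-euler-path} resolves it cleanly, and the remainder is bookkeeping about labels, determinism, and the no-backtracking condition.
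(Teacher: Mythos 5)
Your argument is correct and follows essentially the same route as the paper's proof: reduce the count of initial states to the number of (labels of) incoming edges at the root $v_0$, identify those labels with the last letters of nonempty words of $L_H$, and derive the isolation of the unique initial state from the no-backtracking condition in the transition rule of $\widehat{\D}_H$. You merely supply the details (via Lemma \ref{lem:properties_core}(\ref{lem-euler-path}) and Theorem \ref{Theorem:lang_extended_core}) for the middle equivalence, which the paper asserts without justification.
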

\begin{proof}
    By definition of $\widehat{\D}_H$, it has only one initial state if and only if there is only one incoming edge for the root vertex of the extended core $\widehat{\Delta}_H$. The last property for $\widehat{\Delta}_H$ is equivalent to saying that all words from $L_H$ end with the same later.
    
    Finally, if the words in $L_H$ start with $a^{-1}$ and end with $a$ for some $a \in \Sigma$, then the initial state in $\widehat{\D}_H$ is $(v_0, a)$, which does not have an outgoing edge, because edges from $(v_0, a)$ with label $a^{-1}$ are not permitted by the definition of $\widehat{\D}_H$. Thus the proposition is proved.
\end{proof}

\begin{lem}
\label{lem-essentiality-of-hadD}
If $H\leq F_m$ is conjugacy reduced, then $\widehat{\D}_H$ is an essential automaton and $L_{\widehat{\D}_H} = L_H$.
\end{lem}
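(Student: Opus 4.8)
The plan is to establish both assertions together, since each reduces to a single structural fact. Observe that $\widehat{\D}_H$ is obtained from the minimal DFA $\D_H$ only by surgery at the start: the universal initial state $q^*$ of $\D_H$, which can read any first letter, is deleted, and the states $(v_0,a)\in\F_{\D_H}$ become the initial states of $\widehat{\D}_H$. The only genuinely new restriction is thus at the first step, where a run starting at $(v_0,a)$ is forbidden (by the transition rule $a_i^\e\neq(a_j^{\e'})^{-1}$) to read $a^{-1}$ first. I expect this ``seam'' to be the main obstacle, and the role of the conjugacy-reduced hypothesis is precisely to get past it.

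The key observation I would isolate first is the following: \emph{if $H$ is conjugacy reduced, then $\widehat{\D}_H$ has at least two initial states, hence for every $x\in\Sigma$ there is an initial state $(v_0,a)$ with $a\neq x^{-1}$.} The first part is immediate from Proposition \ref{prop-1}, because $H\neq 1$ guarantees at least one initial state while having exactly one is equivalent to $H$ being \emph{not} conjugacy reduced; the second part holds because at most one of the (two or more) distinct labels $a$ indexing the initial states can equal $x^{-1}$. This is exactly what is needed to launch any word whose first letter is $x$ from a legal initial state.

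I would then prove $L_{\widehat{\D}_H}=L_H$ in two inclusions. For $L_{\widehat{\D}_H}\subseteq L_H$: an accepted nonempty word $w=x_1\cdots x_n$ labels a run $(v_0,a)\to\cdots\to(v_0,b)$; taking first coordinates gives a path in $\widehat{\Delta}_H$ from $v_0$ to $v_0$, while the transition constraint forces $x_i\neq x_{i+1}^{-1}$, so $w$ is reduced, whence $w\in L_H$ by Theorem \ref{Theorem:lang_extended_core} (the empty word is accepted since $\I_{\widehat{\D}_H}=\F_{\widehat{\D}_H}\neq\emptyset$, and it lies in $L_H$). For $L_H\subseteq L_{\widehat{\D}_H}$: a nonempty $w=x_1\cdots x_n\in L_H$ labels a reduced closed path at $v_0$, so its last letter labels an edge into $v_0$ and $(v_0,x_n)\in\F_{\widehat{\D}_H}$; choosing via the key observation an initial state $(v_0,a)$ with $a\neq x_1^{-1}$, the run reading $w$ from $(v_0,a)$ is legal at every step (the first step by the choice of $a$, the rest because $w$ is reduced) and ends at $(v_0,x_n)$, so $w\in L_{\widehat{\D}_H}$.

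Finally, for essentiality I would lift admissible paths from $\D_H$, which is essential by Proposition \ref{prop-D_H-is-essntial}. Each state $(v,a_i^\e)$ sits on an admissible path $q^*\xrightarrow{w_1}(v,a_i^\e)\xrightarrow{w_2}(v_0,b)$ of $\D_H$ with $w_1w_2\in L_H$. Writing $y_1$ for the first letter of $w_1w_2$ and replacing the initial transition $q^*\xrightarrow{y_1}(v_0y_1,y_1)$ by $(v_0,a)\xrightarrow{y_1}(v_0y_1,y_1)$ for an initial state $(v_0,a)$ with $a\neq y_1^{-1}$ (again supplied by the key observation), the two runs agree after the first step; since $q^*$ has no incoming edges, the terminal state is genuinely of the form $(v_0,b)\in\F_{\widehat{\D}_H}$. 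This produces an admissible path of $\widehat{\D}_H$ through $(v,a_i^\e)$, so every state is admissible and $\widehat{\D}_H$ is essential.
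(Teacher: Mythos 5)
Your proposal is correct and follows essentially the same route as the paper: both arguments use Proposition \ref{prop-1} to get at least two initial states, so that any first letter can be read from some legal initial state, and then transfer essentiality and the language identity from $\D_H$ (Proposition \ref{prop-D_H-is-essntial}) by swapping the initial transition at $q^*$ for one at a suitable $(v_0,a)$. Your write-up merely spells out the details (reducedness of accepted labels via the transition constraint, the role of Theorem \ref{Theorem:lang_extended_core}, and the fact that $q^*$ has no incoming edges) that the paper leaves implicit.
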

\begin{proof}
  By Proposition \ref{prop-1}, $H$ is conjugacy reduced means that $ \I_{\widehat{\D}_H}$ contains at least two elements $(v_0, a)$ and $(v_0, b)$ for $a\neq b \in \Sigma$. This means that whichever state one can move from $q^*$ in $\D_H$ is possible to move from at least one state from  $\I_{\widehat{\D}_H}$ (in fact, from precisely $|\I_{\widehat{\D}_H}|-1$ states.) Therefore, the fact that $\widehat{\D}_H$  is essential follows from $\mathcal{D}_H$ being essential, see Proposition \ref{prop-D_H-is-essntial}. 
  
  On the other hand, every state that is possible to attain from the states $\I_{\widehat{\D}_H}$ in $\widehat{\D}_H$ is possible to attain from $q^*$ in ${\D_H}$. Therefore,  $L_{\widehat{\D}_H} = L_{\mathcal{D}_H}= L_H$.
\end{proof}

\begin{prop}
\label{Prop:^D_H_equal_H} If $H\leq F_m$ is  conjugacy reduced, then $\widehat{\D}_H$ is deterministic and has homogeneous ambiguity $\deg(v_0) - 1$.
\end{prop}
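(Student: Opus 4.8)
The plan is to treat the two assertions in turn, determinism being immediate and the exact ambiguity count being the substantive point.

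\emph{Determinism.} The transition function $\delta_{\widehat{\D}_H}$ coincides with $\delta_{\D_H}$ on every state of $Q_{\widehat{\D}_H}$; passing from $\D_H$ to $\widehat{\D}_H$ only discards the state $q^*$ and reassigns which states are initial and final. Since $\D_H$ is deterministic by Remark \ref{rem:partial/complete_H2}, so is $\widehat{\D}_H$. Directly: from a state $(v,a_i^{\e})$ a letter $a_j^{\e'}$ can only lead to $(va_j^{\e'},a_j^{\e'})$, and since the Moore diagram $\widehat{\Delta}_H$ is that of the \emph{deterministic} automaton $\A_{\widehat{\Delta}_H}$, the vertex $va_j^{\e'}$ is uniquely determined, so distinct outgoing edges of $(v,a_i^{\e})$ carry distinct labels.

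\emph{Ambiguity.} Fix a nonempty $w=x_1x_2\cdots x_n \in L(\widehat{\D}_H)$. Since $H$ is conjugacy reduced, Lemma \ref{lem-essentiality-of-hadD} gives $L(\widehat{\D}_H)=L_H$, so $w$ is reduced and represents an element of $H$; hence by Theorem \ref{Theorem:lang_extended_core} reading $w$ in $\widehat{\Delta}_H$ from the root traces a unique path $v_0 \xrightarrow{x_1} v_1 \xrightarrow{x_2} \cdots \xrightarrow{x_n} v_n=v_0$, closed at $v_0$ because $w\in H$. I would then describe the admissible paths of $\widehat{\D}_H$ labelled $w$. Such a path starts at some $(v_0,a)\in \I_{\widehat{\D}_H}$ and, by the defining transitions of $\widehat{\D}_H$, is forced along $(v_0,a)\to (v_1,x_1)\to \cdots \to (v_n,x_n)=(v_0,x_n)$: the steps after the first are automatically legal because $w$ is reduced (so $x_{k-1}\neq x_k^{-1}$) and the required edges of $\widehat{\Delta}_H$ are exactly those of the $w$-path, while the terminus $(v_0,x_n)$ lies in $\F_{\widehat{\D}_H}$ since $x_n$ labels the incoming edge $v_{n-1}\to v_0$. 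The one genuine constraint is at the first step, where the transition from $(v_0,a)$ on $x_1$ is defined precisely when $a\neq x_1^{-1}$. Thus, by determinism, the admissible paths labelled $w$ correspond bijectively to the initial states $(v_0,a)$ with $a\neq x_1^{-1}$ (each such state gives exactly one path, and distinct states give paths that already differ at their initial vertex).

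\emph{Counting the admissible start states} is where I expect the (short) crux to lie, and it rests on $\widehat{\Delta}_H$ being co-deterministic. Because $\widehat{E}=E_{\Delta_H}\cup\overline{E}_{\Delta_H}$, the involution $e\mapsto\overline{e}$ matches edges terminating at $v_0$ with label $a$ bijectively with edges originating at $v_0$ with label $a^{-1}$, and determinism of $\widehat{\Delta}_H$ leaves at most one of the latter for each label. Hence the incoming edges at $v_0$ have pairwise distinct labels, so $|\I_{\widehat{\D}_H}|$ equals the in-degree $\deg(v_0)$, and for the specific label $x_1^{-1}$ there is at most one incoming edge. There is at least one, since reversing the first edge $v_0\xrightarrow{x_1}v_1$ of the $w$-path yields $v_1\xrightarrow{x_1^{-1}}v_0$ in $\widehat{E}$; so exactly one initial state carries the excluded label and precisely $\deg(v_0)-1$ remain. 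As $H$ is conjugacy reduced, Proposition \ref{prop-1} gives $\deg(v_0)\geq 2$, so this count is a positive integer independent of $w$, establishing that $\widehat{\D}_H$ has homogeneous ambiguity $\deg(v_0)-1$.
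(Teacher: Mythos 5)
Your proof is correct and follows essentially the same route as the paper's: determinism is inherited from $\D_H$, and the ambiguity count comes from observing that each nonempty $w\in L_H$ traces a unique forced path from each initial state $(v_0,a)$ with $a\neq x_1^{-1}$, of which there are exactly $|\I_{\widehat{\D}_H}|-1=\deg(v_0)-1$. You simply supply more detail (the co-determinism argument identifying $|\I_{\widehat{\D}_H}|$ with $\deg(v_0)$ and the exclusion of exactly one initial state per word) than the paper's terse version.
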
 
\begin{proof} 
The fact that $\widehat{\D}_H$  is deterministic follows straightforwardly from the definition of $\widehat{\D}_H$. The fact that $\widehat{\D}_H$ has homogeneous ambiguity $\deg(v_0) - 1$ follows from the observations that $\D_H$ is a DFA  (hence, is unambiguous) and the states that can be attained from $q^*$ in $\D_H$ are precisely the states that can be attained from precisely $|\I_{\widehat{\D}_H}|-1$ initial states of $\widehat{\D}_H$ (note that except the initial states, the states of $\D_H$ and $\widehat{\D}_H$ coincide). Therefore, $\widehat{\D}_H$ has homogeneous ambiguity $|\I_{\widehat{\D}_H}|-1=deg(v_0)-1$.\end{proof}

\subsection{Ergodicity of $\widehat{\mathcal{D}}_H$}\label{S:5.4_ergo_hatD_H}
The main goal of this subsection is to give a complete description of when $\widehat{\D}_H$ is ergodic for finitely generated subgroups $H$ of $F_m$. In Subsection \ref{section-on-entropy} we discuss some of its consequences that are connected with the entropy of $L_H$. Everywhere in this section we assume that $H$ is a non-trivial finitely generated subgroup of the free group $F_m=F(A)$, $\Sigma=A \cup A^{-1}$.

A finite state automaton is said to be \emph{ergodic} if its Moore diagram, regarded as a directed graph, is strongly connected. That is for each ordered pair of vertices of the Moore diagram there is a path from the first to the second vertex. An alternative characterization of ergodicity of a finite state automaton is that the formal language $L \subseteq \Sigma^*$ that it defines is \emph{irreducible}, which means that for each $w_1, w_2 \in L$ there exists $w \in \Sigma^*$ such that $w_1w w_2 \in L$. See \cite{MR2022842ceccherini2003}. Yet another characterization of ergodicity is that the adjacency matrix $M$ of the Moore diagram of the automaton is \emph{irreducible}: if $M$ is of size $m\times m$, then for each $1\leq i , j \leq m$, there exists $n \in \mathbb{N}$ such that the $(i, j)$-th coefficient of $M^n$ is strictly positive. The latest property of ergodic automata of bounded ambiguity can be used to compute the entropy of the corresponding language, which we discuss later in Subsection \ref{section-on-entropy}. 

\begin{prop}
\label{prop-2}
    $1 \neq H  \leq F_m$ is conjugacy reduced and cyclic if and only if there exist $a \neq b \in \Sigma$ such that all words from $L_H$ either start with $a^{-1}$ and end with $b$ or start with $b^{-1}$ and end with $a$. Moreover, in such case $\widehat{\D}_H$ has exactly two initial states, $(v_0, a)$ and $(v_0, b)$, which belong to two different connected components in the Moore diagram of $\widehat{\D}_H$. 
\end{prop}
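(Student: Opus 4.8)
The plan is to prove the two implications separately and then read off the ``moreover'' statement from the resulting cyclic structure. Throughout I use that $L_H$ is closed under inversion, together with the following elementary observation coming from property \ref{lem-euler-path} of Lemma \ref{lem:properties_core}: the set of \emph{first} letters occurring in words of $L_H$ is exactly the set of labels of edges of $\widehat{\Delta}_H$ leaving $v_0$, and the set of \emph{last} letters is exactly the set of labels of edges entering $v_0$. Indeed, given an edge $e$ with $o(e)=v_0$, a reduced closed path at $v_0$ traversing $e$ splits as $\alpha e\beta$ with $\alpha,\beta$ closed at $v_0$, and the subpath $e\beta$ is a reduced closed path at $v_0$ whose label begins with $\widehat{\mu}(e)$; the statement about last letters is symmetric. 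Since distinct edges at $v_0$ carry distinct labels in the folded graph, the number of such first letters equals $\deg(v_0)$.

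For the forward direction, assume $H=\langle g\rangle\neq 1$ is conjugacy reduced. I first argue that $g$ may be taken cyclically reduced: if $g=uwu^{-1}$ with $w$ cyclically reduced, $w\neq 1$, and $u\neq 1$ with first letter $s$, then every $g^{\pm k}=uw^{\pm k}u^{-1}$ is a reduced word starting with $s$ and ending with $s^{-1}$, so every word of $L_H$ has the form $s(\cdots)s^{-1}$, contradicting conjugacy reducedness. Writing the cyclically reduced generator as $g=x_1\cdots x_n$ and setting $a=x_1^{-1}$, $b=x_n$, a direct computation shows that the reduced words $g^{k}$ ($k>0$) start with $x_1=a^{-1}$ and end with $x_n=b$, while the $g^{-k}$ start with $x_n^{-1}=b^{-1}$ and end with $x_1^{-1}=a$; all of these lie in $L_H$, and $a\neq b$ precisely because $x_n\neq x_1^{-1}$, i.e.\ because $g$ is cyclically reduced.

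For the backward direction, let $a\neq b$ be as in the statement. Conjugacy reducedness is immediate: by inverse-closure, a word starting with $a^{-1}$ and ending with $b$ has an inverse ending with $a$, so $L_H$ contains words with two distinct last letters $a\neq b$. The substantive claim is that $H$ is cyclic, and this is the hard part of the whole proposition. By the preliminary observation the first letters of $L_H$ are exactly $\{a^{-1},b^{-1}\}$, whence $\deg(v_0)=2$; let $e$ be the edge with $o(e)=v_0$, $\widehat{\mu}(e)=a^{-1}$, $t(e)=u$, and $\bar e$ its reverse (label $a$, running $u\to v_0$). I will show that the rank of $H$, i.e.\ the first Betti number of the core $\Delta_H$, equals $1$. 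Suppose it were $\geq 2$ and delete the geometric edge $\{e,\bar e\}$ from $\Delta_H$. If $\{e,\bar e\}$ is not a bridge, the remaining graph is connected of rank $\geq 1$, so there is a nontrivial reduced closed path $\lambda$ at $u$ avoiding $\{e,\bar e\}$. If it is a bridge, the component $C$ containing $u$ cannot be a tree: otherwise $C$ would either reduce $u$ itself to degree one, or (having at least two vertices) possess a leaf distinct from $u$, and either way a vertex of $\Delta_H$ other than $v_0$ would have degree one, violating property \ref{prop_1} of Lemma \ref{lem:properties_core}; hence $C$ has positive rank and again yields a nontrivial reduced closed path $\lambda$ at $u$ inside $C$. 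In either case $e\,\lambda\,\bar e$ is a reduced closed path at $v_0$ whose label lies in $L_H$, starts with $a^{-1}$ and ends with $a\neq b$, contradicting the hypothesis. Thus the rank is $1$ and, as $H\neq 1$, $H$ is infinite cyclic.

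For the ``moreover'' statement, $\deg(v_0)=2$ gives via \eqref{eqn:^D_H_initial_set} exactly two initial states $(v_0,a)$ and $(v_0,b)$ of $\widehat{\D}_H$. Since $H$ is cyclic and conjugacy reduced, the core $\Delta_H$ is a single simple cycle, so $\widehat{\Delta}_H$ is that cycle carrying both orientations. A state of $\widehat{\D}_H$ records the last traversed directed edge, and the non-backtracking transition rule \eqref{eqn:^D_H_transition} forces every admissible move to continue around the cycle in the same orientation; hence the states split into two disjoint directed cycles, one per orientation, which are distinct connected components of the Moore diagram. As $(v_0,a)$ records arrival at $v_0$ along $\bar e$ (label $a$) and $(v_0,b)$ records arrival along the oppositely oriented edge (label $b$), the two initial states lie in the two different components, as claimed. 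The main obstacle in the whole argument is the cyclicity step, and within it the bridge case, where one must invoke the absence of degree-one vertices (other than $v_0$) to force the deleted side to carry the extra rank.
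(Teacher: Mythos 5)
Your proof is correct, but for the substantive direction (the ``all words have the form $a^{-1}wb$ or $b^{-1}wa$'' $\Rightarrow$ ``$H$ cyclic'' implication) you argue quite differently from the paper. The paper stays entirely in word combinatorics: assuming $H$ is not cyclic, it takes a shortest word $a^{-1}w_1b$ of $L_H$ ending in $b$ and a shortest $a^{-1}w_2b$ ending in $b$ outside the maximal cyclic subgroup containing the first, and derives a contradiction from the reduced form of $b^{-1}w_2^{-1}w_1b$ via the minimality of the two lengths. You instead work on the core: using the dictionary between first/last letters of $L_H$ and edges at $v_0$ (which you correctly extract from property \eqref{lem-euler-path} of Lemma \ref{lem:properties_core}), you get $\deg(v_0)=2$, and then show the first Betti number of $\Delta_H$ is $1$ by deleting the $a^{-1}$-edge at $v_0$ and, in both the bridge and non-bridge cases, manufacturing a reduced closed path at $v_0$ whose label starts with $a^{-1}$ and ends with $a$ -- the bridge case resting on the absence of degree-one vertices other than $v_0$. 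Both arguments are sound; the paper's avoids the rank--Betti-number correspondence, while yours yields a sharper geometric picture. This pays off in the ``moreover'' part: the paper only shows that no accepted path leads from one initial state to the other (which per se addresses strong connectivity), whereas your identification of $\Delta_H$ as a single cycle and of the Moore diagram of $\widehat{\D}_H$ as two disjoint directed cycles, one per orientation, directly justifies the stronger claim that the two initial states lie in different connected components. (One cosmetic slip: in your preliminary observation, $\beta$ runs from $t(e)$ to $v_0$ rather than being closed at $v_0$; the conclusion that $e\beta$ is a reduced closed path at $v_0$ is what you actually use, and that is correct.)
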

\begin{proof}
First, assume that $L_H$ satisfies to the property from the statement. Then it follows from Proposition \ref{prop-1} that $H$ is conjugacy reduced. Now, by contradiction assume that $H$ is not cyclic. Then, let $a^{-1}w_1b$ be the shortest word in $L_H$ ending with $b$ and let  $a^{-1}w_2b \in L_H$ be the shortest word ending with $b$ that is not contained in the maximal cyclic subgroup of $H$ containing  $a^{-1}w_1b \in H$. Since $H$ is cyclically reduced, $a \neq b^{-1}$. Then $1\neq red(b^{-1}w_2^{-1} w_1 b ) \in L_H $. However, since by the assumption no word in $L_H$ starts with $b^{-1}$ and ends with $b$, it must be that after the free cancellation of $b^{-1}w_2^{-1} w_1 b$ either the prefix or the suffix of $b^{-1}w_2^{-1} w_1 b$ cancels out. In the first case we end up with a word that is shorter than $a^{-1}w_1b$, in the second case we end up with a word that is shorter than $a^{-1}w_2b$. Therefore, we end up with a contradiction because of the minimality condition on the lengths of $a^{-1}w_1b$ and $a^{-1}w_2b$. Thus $H$ is cyclic.

Now assume that $H$ is cyclic and conjugacy reduced. Then there exists $w \in H$ that generates $H$. In other words, $L_H = \{ w^n \mid n\in \mathbb{Z} \}$. Since $H$ is conjugacy reduced, the prefix of $w$ is not the inverse of the suffix. Therefore, the prefix corresponds to $a^{-1}$, the suffix corresponds to $b$, and the ``only if" part of the first statement of the proposition follows as well.

Now let us show the second statement. The condition that all words of $L_H$ are of the form $(a^{-1} w b)^{\pm 1}$ implies that the root vertex $v_0$ of $\widehat{\Delta}_H$ has exactly two incoming edges with corresponding labels $a$ and $b$, and it has exactly two outgoing edges with the same labels. This means that $\mathcal{I}_{\widehat{\D}_H} = \mathcal{F}_{\widehat{\D}_H} = \{(v_0, a), (v_0, b) \}$. Moreover, since by definition of $\widehat{\D}_H$ there is no outgoing edge of $(v_0, a)$ with label $a^{-1}$ in the Moore diagram, we have that the label of any accepted path that starts at  $(v_0, a)$ starts with $b^{-1}$ and hence ends with $a^{-1}$, hence is a loop. The same statement in regard with $(v_0, b)$ is true as well. Therefore, the second assertion of the proposition follows as well.
\end{proof}

\begin{thm}
\label{thm-ergodicity}
Let $H$ be a finitely generated subgroup of $F_m$. Then
$\widehat{\D}_H$ is an ergodic automaton if and only if $H$ is conjugacy reduced and non cyclic.
\end{thm}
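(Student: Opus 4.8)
The plan is to prove the two implications separately, using as the central observation that the Moore diagram of $\widehat{\D}_H$ is exactly the \emph{reduced-walk} (non-backtracking) graph of the extended core $\widehat{\Delta}_H$. Indeed, since $\widehat{\Delta}_H$ is folded (hence deterministic and co-deterministic), a state $(v,a_i^{\e})$ records precisely the directed edge of $\widehat{E}$ with terminus $v$ and label $a_i^{\e}$, so the states of $\widehat{\D}_H$ are in bijection with the directed edges of $\widehat{\Delta}_H$; and by the definition \eqref{eqn:^D_H_transition} of $\delta_{\widehat{\D}_H}$ one passes from $(v,a_i^{\e})$ to $(va_j^{\e'},a_j^{\e'})$ exactly when the corresponding edges are composable at $v$ and do not form a backtrack $a_i^{\e}(a_i^{\e})^{-1}$. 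Thus ergodicity of $\widehat{\D}_H$ is the strong connectivity of this non-backtracking graph. Two translations drive everything: first, $H$ is conjugacy reduced precisely when $v_0$ has at least two incoming edges, so, together with Lemma \ref{lem:properties_core}(\ref{prop_1}) (no non-root vertex has degree one), conjugacy reducedness is equivalent to $\widehat{\Delta}_H$ having minimum degree $\geq 2$; second, a connected graph with all degrees $\geq 2$ has first Betti number (equal to the rank of $H$) equal to $1$ if and only if it is a single cycle, so within the conjugacy-reduced case $H$ is non-cyclic if and only if $\widehat{\Delta}_H$ is \emph{not} a cycle.

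For the ``only if'' direction I would argue by contraposition, which is immediate from the earlier propositions. If $H$ is not conjugacy reduced, then by Proposition \ref{prop-1} the automaton $\widehat{\D}_H$ has a single, isolated initial state; moreover the unique incoming edge at $v_0$ cannot be a loop (a loop labelled $a$ would put $a\in L_H$, contradicting that every word starts with $a^{-1}$ and ends with $a$), so $\widehat{\Delta}_H$ has a further vertex and hence $\widehat{\D}_H$ has further states, whence its Moore diagram is not strongly connected. If instead $H$ is conjugacy reduced and cyclic, then Proposition \ref{prop-2} exhibits two initial states lying in two distinct connected components of the Moore diagram, so again $\widehat{\D}_H$ is not ergodic.

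For the ``if'' direction, assume $H$ is conjugacy reduced and non-cyclic. By the translations above, $\widehat{\Delta}_H$ is then connected, has minimum degree $\geq 2$, and is not a single cycle, and the goal is to show that its non-backtracking graph is strongly connected. (Lemma \ref{lem-essentiality-of-hadD} simultaneously records that $\widehat{\D}_H$ is essential and $L(\widehat{\D}_H)=L_H$, matching the irreducibility characterization of ergodicity.) Essentiality in fact reduces the task to the mutual reachability of the initial states: since every state reaches, and is reached from, some state of $\I_{\widehat{\D}_H}$, it suffices to show that for any two incoming labels $c,c'$ at $v_0$ there is a nontrivial reduced loop at $v_0$ whose first letter is $\neq c^{-1}$ and whose last letter is $c'$; splicing ``state $\to$ initial $\to$ initial $\to$ state'' then connects any prescribed pair. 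Equivalently, one must prove the purely graph-theoretic statement that the non-backtracking graph of a finite connected graph of minimum degree $\geq 2$ that is not a single cycle is strongly connected.

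The hard part is exactly this last statement, and it is where non-cyclicity is indispensable. Minimum degree $\geq 2$ makes every reduced walk infinitely extendable — at each vertex there is always a continuation other than the reverse of the incoming edge — and connectivity lets one steer a walk onto the underlying edge of any target state; the only genuine obstruction is \emph{orientation reversal}, namely reaching the reverse edge $\bar e$ from the edge $e$. In a single cycle reversal is impossible, which is precisely the two-component phenomenon of Proposition \ref{prop-2}. When $\widehat{\Delta}_H$ is not a cycle its Betti number is $\geq 2$, so some vertex $w$ has degree $\geq 3$; arriving at $w$ along one edge, the availability of at least two further edges at $w$ lets one leave $w$, traverse a reduced closed walk (possible by connectivity and minimum degree $\geq 2$), and re-enter $w$ so as to exit along the reverse of the original edge. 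This supplies the missing reversals, and reversal combined with connectivity gives a non-backtracking path between any two directed edges, i.e.\ ergodicity of $\widehat{\D}_H$. I would isolate the orientation-reversal construction as a standalone lemma and dispose of the degenerate configurations (loops and multiple edges in $\widehat{\Delta}_H$) by direct inspection.
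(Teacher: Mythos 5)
Your ``only if'' half follows the paper exactly: Propositions \ref{prop-1} and \ref{prop-2} dispose of the non-conjugacy-reduced case and the conjugacy-reduced cyclic case, and your added observation that the unique incoming edge at $v_0$ cannot be a loop (so the isolated initial state is not the only state) tightens a point the paper leaves implicit. The ``if'' half is where you genuinely diverge. The paper, like you, uses essentiality and $\I_{\widehat{\D}_H}=\F_{\widehat{\D}_H}$ to reduce to connecting two initial states $(v_0,a)$ and $(v_0,b)$, but it then produces the required word \emph{algebraically}: take $u\in L_H$ ending in $b$, take $v\in L_H$ with $\langle u\rangle\cap\langle v\rangle=\{1\}$ (possible since $H$ is non-cyclic), and check that $red(u^{-n}vu^{n})$ for large $n$ starts with $b^{-1}\neq a^{-1}$ and ends with $b$. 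You instead identify the Moore diagram of $\widehat{\D}_H$ with the non-backtracking edge graph of $\widehat{\Delta}_H$, translate the hypotheses into ``connected, minimum degree $\geq 2$, not a cycle,'' and appeal to strong connectivity of the non-backtracking graph of such a graph. That statement is true (it is the classical irreducibility criterion for the Hashimoto edge matrix), and your route has the virtue of isolating the exact graph-theoretic content of the theorem.

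The weakness is that you have not proved the one step carrying all the weight. The orientation-reversal claim --- arrive at a degree-$\geq 3$ vertex $w$ along $e$, ``leave $w$, traverse a reduced closed walk, and re-enter $w$ so as to exit along $\bar e$'' --- needs a reduced closed walk based at $w$ that begins with an edge other than $\bar e$ and \emph{returns to $w$ along a directed edge different from $e$}; neither the return to $w$ nor the constraint on the re-entry edge follows merely from ``connectivity and minimum degree $\geq 2$,'' since a non-backtracking walk cannot simply be steered along a chosen path (the reversal of the incoming edge is forbidden at every step). Similarly, ``reversal combined with connectivity gives a non-backtracking path between any two directed edges'' is asserted rather than argued, although for your application Lemma \ref{lem-essentiality-of-hadD} covers that part. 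So the skeleton is sound and genuinely different from the paper's, but the standalone lemma you defer to still has to be proved --- for instance by exploiting Lemma \ref{lem:properties_core}(\ref{lem-euler-path}), which already supplies a reduced closed path at $v_0$ through any prescribed edge --- whereas the paper's $red(u^{-n}vu^{n})$ device settles the matter in a few lines.
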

\begin{proof}
First, assume that $H$ is conjugacy reduced and non cyclic.

By Lemma \ref{lem-essentiality-of-hadD}, $\widehat{\D}_H$ is essential, which means that any state of $\widehat{\D}_H$  is on some admissible path. Therefore, since $\I_{\widehat{\D}_H}=\F_{\widehat{\D}_H}$, in order to show that $\widehat{\D}_H$ is strongly connected, it is enough to show that for any ordered pair of different initial (=final) stats of $\widehat{\D}_H$, there is a directed path connecting the first state to the second one. (Note that, by Proposition \ref{prop-1}, $\widehat{\D}_H$ has at least two different initial states.)

Let $(v_0, a), (v_0, b)$, $a \neq b \in \Sigma$, be an arbitrary pair of initial states. We want to show that there exists a path from $(v_0, a)$ to $(v_0, b)$, which is equivalent to the existence of a word $w \in L_H$ which does not start with $a^{-1}$ and ends with $b$, as in such case $\delta_{\widehat{\D}_H}((v_0, a), w)= (v_0, b)$.

The fact that $(v_0, b) \in \mathcal{I}_{\widehat{\D}_H}$ means that there exists $u \in L_H$ such that it ends with letter $b$. Since by our assumption $H$ is not cyclic, there exists $1\neq v \in L_H$ such that $\langle u \rangle \cap  \langle v \rangle = \{1\}$. Then, for large enough $n \in \mathbb{N}$, the word $\bar v: = red( u^{-n} v u^{n}) \in L_H$ starts with $b^{-1}$ and ends with $b$. Therefore,  $\delta_{\widehat{\D}_H}((v_0, a), \bar v)= (v_0, b)$. Thus we showed that if $H$ is conjugacy reduced and non cyclic, then $\widehat{\D}_H$ is ergodic.

Now assume that $H$ is not conjugacy reduced. Then, by Proposition \ref{prop-1}, $\widehat{\D}_H$ has exactly one initial state, which is isolated. Therefore, in such case $\widehat{\D}_H$ is not ergodic. 

Finally, assume that $H$ is conjugacy reduced but cyclic. Then, by Proposition \ref{prop-2}, $\widehat{\D}_H$ has exactly two initial states which are not connected one to the other. Therefore,  in such case $\widehat{\D}_H$ is not ergodic either. Thus the theorem is proved.
\end{proof}

Combining Theorem \ref{thm-ergodicity} with Propositions \ref{prop-1} and \ref{prop-2}, we get the following corollary.

\begin{cor}
If $|\mathcal{I}_{\widehat{\D}_H}|>2$, then $\widehat{\D}_H$ is ergodic.
\end{cor}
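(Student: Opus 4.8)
The plan is to prove the corollary purely by combining the characterization of ergodicity in Theorem \ref{thm-ergodicity} with the exact counts of initial states supplied by Propositions \ref{prop-1} and \ref{prop-2}. By Theorem \ref{thm-ergodicity}, showing that $\widehat{\D}_H$ is ergodic is equivalent to showing that $H$ is both conjugacy reduced and non-cyclic. Since the standing convention guarantees that $H \neq 1$ and is finitely generated, both propositions are in force, and the entire argument reduces to ruling out the two non-ergodic regimes by counting $|\mathcal{I}_{\widehat{\D}_H}|$.

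First I would establish that $H$ is conjugacy reduced, reading Proposition \ref{prop-1} contrapositively: if $H$ were not conjugacy reduced, then $\widehat{\D}_H$ would have exactly one initial state, i.e. $|\mathcal{I}_{\widehat{\D}_H}| = 1$. The hypothesis $|\mathcal{I}_{\widehat{\D}_H}| > 2$ excludes this, so $H$ must be conjugacy reduced. Next I would establish that $H$ is non-cyclic by invoking the ``moreover'' clause of Proposition \ref{prop-2}: were $H$ conjugacy reduced and cyclic, then $\widehat{\D}_H$ would have exactly two initial states, forcing $|\mathcal{I}_{\widehat{\D}_H}| = 2$. Again the hypothesis rules this out, and since $H$ is already known to be conjugacy reduced, $H$ cannot be cyclic. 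Applying Theorem \ref{thm-ergodicity} to these two conclusions yields that $\widehat{\D}_H$ is ergodic.

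The argument is a short syllogism, so there is no genuinely hard step; the only point requiring care is the contrapositive bookkeeping. The key observation making it work is that Propositions \ref{prop-1} and \ref{prop-2} pin down the cardinality of $\mathcal{I}_{\widehat{\D}_H}$ \emph{exactly} in each non-ergodic case (one initial state when $H$ is not conjugacy reduced, two initial states when $H$ is conjugacy reduced and cyclic), so that any value strictly greater than two eliminates both obstructions simultaneously. It is also worth recording at the outset that the assumption $H \neq 1$ is what licenses the use of both propositions, and this is furnished by the standing convention.
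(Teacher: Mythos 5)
Your proposal is correct and is essentially the paper's own argument: the paper derives the corollary by ``combining Theorem \ref{thm-ergodicity} with Propositions \ref{prop-1} and \ref{prop-2},'' which is precisely the contrapositive counting you spell out (one initial state in the non-conjugacy-reduced case, two in the conjugacy-reduced cyclic case, so $|\mathcal{I}_{\widehat{\D}_H}|>2$ forces conjugacy reduced and non-cyclic, hence ergodic). Nothing further is needed.
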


\begin{rem}
Besides the case when $H$ is cyclic and conjugacy reduced, the only other case when $|\mathcal{I}_{\widehat{\D}_H}|=2$ is when, for some $a \in \Sigma$, $L_H$ consists of words only, and inclusively, of the forms $awa, awa^{-1}, a^{-1}wa^{-1}$ or $a^{-1}wa$. The latest corresponds, for example, to the case when $H= \langle aba^{-1}, a^2 \rangle < F(a, b) = F_2$.
\end{rem}

\begin{prop}
\label{prop-when-L_H-is-irreducibl}
For $H\leq F_m$, $L_H$ is irreducible if and only if $H$ is conjugacy reduced and non cyclic.
\end{prop}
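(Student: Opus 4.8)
The plan is to establish the two implications by different means: the ``if'' direction follows immediately from the ergodicity theorem already proved, while the ``only if'' direction I would prove directly, by contraposition, exhibiting for each forbidden $H$ a pair of words in $L_H$ that cannot be joined inside $L_H$.

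For the ``if'' direction, suppose $H$ is conjugacy reduced and non cyclic. By Theorem \ref{thm-ergodicity} the automaton $\widehat{\D}_H$ is ergodic, and by Lemma \ref{lem-essentiality-of-hadD} it recognizes $L_H$. Thus $L_H$ is a regular language generated by an ergodic automaton, and the easy direction of the characterization recalled in Section~\ref{S:preli} (Theorem 3.3.11 of \cite{Lind-Marcus}) yields that $L_H$ is irreducible. This needs no further computation.

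For the ``only if'' direction I argue by contraposition. Since $H \neq 1$ by our standing convention, if $H$ is not both conjugacy reduced and non cyclic then either (a) $H$ is not conjugacy reduced, or (b) $H$ is conjugacy reduced but cyclic. In both cases the guiding remark is that if some concatenation $w_1 w w_2$ lies in $L_H$ then it is a reduced word, so its first letter is that of $w_1$ and its last letter is that of $w_2$; moreover, as $w_1, w_2 \in H$, the group element represented by $w$ lies in $H$, so the reduced form of $w$ is either trivial or again a word of $L_H$. In case (b), Proposition \ref{prop-2} supplies $a \neq b \in \Sigma$ such that every word of $L_H$ either starts with $a^{-1}$ and ends with $b$, or starts with $b^{-1}$ and ends with $a$; choosing $w_1$ of the first type and $w_2$ of the second type, any reduced $w_1 w w_2$ would begin with $a^{-1}$ and end with $a$, which matches neither admissible form (since $a\neq b$), so no joining word $w$ can exist.

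Case (a) is the step I expect to be the main obstacle, because there the first/last-letter bookkeeping alone gives no contradiction: by Proposition \ref{prop-1} there is $a\in\Sigma$ with every word of $L_H$ of the form $a^{-1}(\cdots)a$, and then $w_1 w w_2$ automatically begins with $a^{-1}$ and ends with $a$. Here one must invoke free reduction. Taking $w_1 = w_2 = g$ for a fixed nontrivial $g\in L_H$ (so $g$ ends in $a$), the remark above forces the reduced form of any candidate $w$ to be trivial or to begin with $a^{-1}$; in the first case the junction of the two copies of $g$ already produces the forbidden factor $aa^{-1}$, and in the second the junction between $g$ and $w$ does. Either way $gwg$ fails to be reduced, hence is not in $L_H$, so no $w$ works and $L_H$ is not irreducible. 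Combining the two directions proves the proposition.
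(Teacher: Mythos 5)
Your proof is correct and follows essentially the same route as the paper's: the ``if'' direction is deduced from Theorem \ref{thm-ergodicity}, and the ``only if'' direction is handled by contraposition with the same two cases (not conjugacy reduced; conjugacy reduced but cyclic), using Propositions \ref{prop-1} and \ref{prop-2}. The only difference is one of detail: you make explicit the key observation that a joining word $w$ with $w_1ww_2\in L_H$ must represent an element of $H$ and hence reduce either to the empty word or to a word of $L_H$, which is precisely the point the paper's terser treatment of both cases leaves implicit.
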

\begin{proof}
    The `if' part of the statement follows from Theorem \ref{thm-ergodicity}.
    
    Now assume that $H$ is not conjucagy reduced, then for some $a \in \Sigma$, elements of $L_H$ are (reduced) words of the form $a^{-1} w a$. Since concatenation of any such words is not reduced, it means that $L_H$ is not irreducible in such case.
    
    Finally, assume that $H = \langle w \rangle$ is cyclic. Then, $w, w^{-1} \in L_H$. However, there is no $v \in L_H$ such that $wvw^{-1}$ is also in $L_H$. Therefore, $L_H$ is not irreducible in this case either.
    Thus the proposition is proved.
\end{proof}
\subsection{Entropy of $L_H$}
\label{section-on-entropy}
Recall that for a formal language $L$ its \emph{entropy}, $h(L)$, which is a fundamental numerical invariant of a language, is defined as 
\begin{equation}
     h(L) =  \limsup_{n \rightarrow \infty} { \frac{\log|{B_n(L)}|}{n}}, \label{eqn:entropy_def}
\end{equation}
where $B_n(L)$ is the subset of $L$ of words of length $n$.

If $L$ is a language generated by an automaton $\A$, then by $h(\A)$ we denote $h(L(\A))$. If for $H\leq F_m$, then the entropy of $H$ is $h(H)= h(L_H)$.

One important feature of ergodicity of $\widehat{\D}_H$ is that the adjacency matrix $M_{\widehat{\D}_H}$ of $\widehat{\D}_H$ is irreducible and is of bounded ambiguity (as follows from Theorem \ref{thm-ergodicity} and Proposition \ref{Prop:^D_H_equal_H}). Therefore, one can apply the Perron-Frobenius theory to obtain the following theorem on entropy of $L_H$. (For discussion on Perron-Frobenius theory see Chapter 4 in \cite{Lind-Marcus}.)

\begin{thm}
\label{thm-on-computing-entropy}
If $H \leq F_m$ is a conjugacy reduced and non cyclic finitely generated group, then the entropy of $L_H$ is equal to $\log\lambda$, where $\lambda$ is the maximal eigenvalue of the adjacency matrix $M_{\widehat{\D}_H}$ of $\widehat{\D}_H$.
\end{thm}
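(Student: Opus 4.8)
The plan is to reduce the computation of $h(L_H)$ to counting admissible paths in the Moore diagram of $\widehat{\D}_H$, and then to read off the exponential growth rate of that count from the spectrum of the adjacency matrix via Perron--Frobenius theory. The three facts I would use are already available under our hypotheses: by Theorem~\ref{thm-ergodicity} the assumptions (conjugacy reduced, non cyclic) make $\widehat{\D}_H$ ergodic, so its adjacency matrix $M_{\widehat{\D}_H}$ is irreducible; by Lemma~\ref{lem-essentiality-of-hadD} we have $L_{\widehat{\D}_H}=L_H$; and by Proposition~\ref{Prop:^D_H_equal_H} the automaton $\widehat{\D}_H$ is deterministic with homogeneous ambiguity $k:=\deg(v_0)-1$.

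First I would translate word counts into path counts. Write $S=\I_{\widehat{\D}_H}=\F_{\widehat{\D}_H}$ and let $P_n$ denote the number of admissible paths of length $n$ in the Moore diagram of $\widehat{\D}_H$, i.e.\ paths of length $n$ starting and ending in $S$. Counting paths by their two endpoints gives
$$P_n=\sum_{i,j\in S}\bigl(M_{\widehat{\D}_H}^{\,n}\bigr)_{ij}=\mathbf{1}_S^{\top}M_{\widehat{\D}_H}^{\,n}\,\mathbf{1}_S,$$
where $\mathbf{1}_S$ is the indicator (column) vector of $S$. Because $\widehat{\D}_H$ has homogeneous ambiguity $k$, every nonempty word $w\in L_H$ of length $n$ is the label of exactly $k$ admissible paths, so $P_n=k\,|B_n(L_H)|$ for all $n\geq 1$. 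Taking logarithms and dividing by $n$, the constant factor is washed out since $\tfrac{1}{n}\log k\to 0$, whence
$$h(L_H)=\limsup_{n\to\infty}\frac{\log|B_n(L_H)|}{n}=\limsup_{n\to\infty}\frac{\log P_n}{n}.$$

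Next I would apply Perron--Frobenius theory to the nonnegative integer matrix $M:=M_{\widehat{\D}_H}$. Since $\widehat{\D}_H$ is ergodic, $M$ is irreducible, so its spectral radius equals a positive eigenvalue $\lambda$ (the maximal eigenvalue in the statement), and the associated left and right Perron eigenvectors are strictly positive (see Chapter~4 of \cite{Lind-Marcus}). Strict positivity of the Perron eigenvectors guarantees that the bilinear form $\mathbf{1}_S^{\top}M^{\,n}\mathbf{1}_S$ has nonzero projection onto the leading eigendirection for the nonempty set $S$, so that $P_n$ grows like $\lambda^n$ up to subexponential factors; consequently $\limsup_{n\to\infty}\tfrac{1}{n}\log P_n=\log\lambda$. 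Combining this with the previous display yields $h(L_H)=\log\lambda$.

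The main obstacle is the passage from words to paths in the second step: it is crucial that the ambiguity be uniformly equal to the single constant $k$ over all nonempty accepted words, and this is exactly what Proposition~\ref{Prop:^D_H_equal_H} supplies; even without exact homogeneity, bounded ambiguity would change the count only by a bounded multiplicative factor and hence would still give the same entropy, but homogeneity makes the identity $P_n=k\,|B_n(L_H)|$ clean. A secondary technical point is possible periodicity of $M$: if $M$ is periodic then $(M^n)_{ij}$ can vanish for some $n$, but since we only need the $\limsup$ and the strictly positive Perron data force the growth rate along the relevant residue classes to be exactly $\lambda$, this causes no difficulty.
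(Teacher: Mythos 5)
Your proposal is correct, and it reaches the conclusion by a genuinely more direct route than the paper. The paper factors the argument through the notion of a \emph{base automaton}: it first proves Proposition \ref{prop-aux} (that $h(\A)=h(\breve{\A})$ for any essential automaton of bounded ambiguity, via the padding map $\Lambda$ and the comparison Lemmas \ref{lem-51}--\ref{lem-final-5}), and only then applies Perron--Frobenius to $\breve{\D}_H$, whose adjacency matrix coincides with $M_{\widehat{\D}_H}$. You bypass the base automaton entirely by exploiting the \emph{exact} homogeneous ambiguity $k=\deg(v_0)-1$ from Proposition \ref{Prop:^D_H_equal_H} to get the clean identity $P_n=k\,|B_n(L_H)|$ for $n\geq 1$ (note $k\geq 1$ here because conjugacy reducedness forces $\deg(v_0)\geq 2$ by Proposition \ref{prop-1}), and then apply Perron--Frobenius directly to the bilinear form $\mathbf{1}_S^{\top}M^{\,n}\mathbf{1}_S$. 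What the paper's detour buys is generality: Proposition \ref{prop-aux} applies to any essential automaton of merely bounded ambiguity and is stated as a result of independent interest; what your route buys is economy, since the word-to-path conversion is a single exact identity rather than a two-sided estimate up to bounded length shifts and bounded multiplicity. Your treatment of the Perron--Frobenius step is slightly terse at the lower bound --- since $\mathbf{1}_S$ vanishes off $S$ it does not dominate a positive multiple of the Perron eigenvector, so the cleanest justification is $\mathbf{1}_S^{\top}M^{\,n}\mathbf{1}_S\geq (M^n)_{i_0i_0}$ for a fixed $i_0\in S$ together with the standard fact that $\limsup_n\bigl((M^n)_{i_0i_0}\bigr)^{1/n}=\lambda$ for irreducible $M$ (Theorem 4.4.4 of \cite{Lind-Marcus}, which is exactly the reference the paper invokes at the corresponding point) --- but this is a routine repair, not a gap.
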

\begin{rem}
If $H$ is cyclic, then its entropy is equal to $0$. If $H$ is not conjugacy reduced, then there is $g \in F_m$ such that $gHg^{-1} \leq F_m$ is conjugacy reduced, and $h(H)=h(gHg^{-1})$. 
\end{rem}
%To prove Theorem \ref{thm-on-computing-entropy}, we need the following definition.

\begin{defi}[Base automaton]
Let $\A$ be a finite automaton and let $(V, E)$ be its underlying graph. Then \emph{the base automaton $\breve{\A}$} of $\A$ is the automaton with the same underlying graph $(V, E)$ and such that 
\begin{enumerate}
    \item all states of $\breve{\A}$ are at the same time initial and final states,
    \item all edges of its Moore diagram are labeled with different labels. To be more specific, we assume that each $e \in E$ is labeled by the letter $x_e$.
\end{enumerate}
\end{defi}

To prove Theorem \ref{thm-on-computing-entropy}, we need the following fact.

\begin{prop}
\label{prop-aux}
If $\A$ is an essential automaton with bounded ambiguity, then $h(\A)=h(\breve{\A})$.
\end{prop}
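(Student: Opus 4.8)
The plan is to compare three counting sequences attached to $\A$ and $\breve{\A}$: the number $b_n := |B_n(L(\A))|$ of length-$n$ words accepted by $\A$, the number $p_n$ of length-$n$ admissible paths in the Moore diagram of $\A$, and the number $t_n$ of \emph{all} directed paths of length $n$ in the underlying graph $(V,E)$. I would show that $b_n$ and $p_n$ share the same exponential growth rate (this is where bounded ambiguity enters), that $t_n = |B_n(L(\breve{\A}))|$ on the nose (this is where the two defining properties of the base automaton enter), and finally that $p_n$ and $t_n$ share the same growth rate (this is where essentiality enters). Chaining these three facts gives $h(\A)=h(\breve{\A})$.

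First I would dispose of the two easy links. Let $k$ be the ambiguity bound. Every accepted word is the label of at least one and at most $k$ admissible paths, so $b_n \le p_n \le k\,b_n$; taking $\log$, dividing by $n$ and passing to $\limsup$ absorbs the constant $\log k$, giving $h(\A)=\limsup_n \frac{\log p_n}{n}$. For $\breve{\A}$, since all edges carry distinct labels a directed path is recovered uniquely from its label word and vice versa, and since every state is simultaneously initial and final every path is admissible and every such word is accepted; hence length-$n$ words of $L(\breve{\A})$ are in bijection with length-$n$ paths of $(V,E)$, so $t_n=|B_n(L(\breve{\A}))|$ and $h(\breve{\A})=\limsup_n \frac{\log t_n}{n}$.

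The substantive link is $\limsup_n \frac{\log p_n}{n}=\limsup_n \frac{\log t_n}{n}$. One inequality is free, since every admissible path is a path: $p_n\le t_n$. For the reverse I would use essentiality. As $(V,E)$ is finite and each vertex lies on an admissible path, there is a bound $D$ and, for every vertex $v$, a path $\alpha_v$ of length $d^-_v\le D$ from some initial state to $v$ and a path $\beta_v$ of length $d^+_v\le D$ from $v$ to some final state. Given an arbitrary length-$n$ path $q$ from $u$ to $w$, the concatenation $\alpha_u q \beta_w$ is admissible; for fixed endpoints $(u,w)$ this map is injective (strip off the fixed-length blocks $\alpha_u$ and $\beta_w$), so the number of length-$n$ paths with endpoints $(u,w)$ is at most $p_{n+d^-_u+d^+_w}$. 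Summing over the at most $|V|^2$ endpoint pairs yields $t_n \le |V|^2 \max_{0\le j\le 2D} p_{n+j}$.

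The main obstacle, and the step that needs care, is extracting equality of growth rates from this last inequality, because the shift $j$ inside $p_{n+j}$ disturbs the normalization by $n$. I would write $\frac{\log p_{n+j}}{n}=\frac{n+j}{n}\cdot\frac{\log p_{n+j}}{n+j}$, observe that $\frac{n+j}{n}\to 1$ uniformly over the finite range $0\le j\le 2D$ while $\frac{\log p_m}{m}$ stays bounded above (e.g.\ $p_m\le |E|^m$), and use that for each fixed $j$ the index $n+j$ runs over a cofinite set, so $\limsup_n \frac{\log p_{n+j}}{n+j}=\limsup_m \frac{\log p_m}{m}$. Since the maximum is over finitely many $j$, the limsup of the maximum equals the maximum of the limsups, each equal to $\limsup_m \frac{\log p_m}{m}$, while the factor $|V|^2$ contributes $\frac{\log|V|^2}{n}\to 0$. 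This gives $\limsup_n \frac{\log t_n}{n}\le \limsup_m \frac{\log p_m}{m}$, and together with $p_n\le t_n$ it forces equality, completing the chain $h(\A)=\limsup_n\frac{\log p_n}{n}=\limsup_n\frac{\log t_n}{n}=h(\breve{\A})$.
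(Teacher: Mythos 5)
Your proof is correct and follows essentially the same route as the paper's: essentiality supplies bounded-length extensions of arbitrary paths to admissible ones, bounded ambiguity controls the passage from path counts to word counts, the distinct labels and all-initial/all-final convention give the bijection between paths and words of $\breve{\A}$, and a shift-invariance argument for the $\limsup$ closes the chain. The only cosmetic difference is that you name the intermediate quantity (the number of admissible paths of length $n$) explicitly, whereas the paper packages the same comparison into a single map $\Lambda: L(\breve{\A}) \to L(\A)$ whose length distortion is at most an additive constant and whose fibers have size at most $K|V|^2$.
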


\begin{proof}[Proof of Theorem \ref{thm-on-computing-entropy}.] Since for conjugacy reduced and non cyclic finitely generated $H\leq F_m$, by Lemma \ref{lem-essentiality-of-hadD} and Proposition \ref{Prop:^D_H_equal_H}, $\widehat{\D}_H$ is essential and of bounded ambiguity, then, by Proposition \ref{prop-aux},  $h(\widehat{\D}_H) = h(\breve{\D}_H)$, where by $\breve{\D}_H$ we denote the base automaton of $\widehat{\D}_H$. Additionally, by Theorem \ref{thm-ergodicity}, $\widehat{\D}_H$ is an ergodic automaton, which implies that  $\breve{\D}_H$ is ergodic as well. Moreover, the adjacency matrices of $\widehat{\D}_H$ and $\breve{\D}_H$ coincide and are irreducible. Therefore, the proof of Theorem \ref{thm-on-computing-entropy} follows from Perron-Frobenius theory. See, for example, Theorem 4.4.4. in \cite{Lind-Marcus}.
\end{proof}

\noindent
\textbf{Proof of Proposition \ref{prop-aux}.}
Assuming that $\A$ is an essential automaton, for every vertex $v\in V$ in its underlying graph $(V, E)$, there exist paths that connect $v$ to an initial state and to a final state, respectively. For each $v$ let us fix a pair $(\check{v}, \hat{v})$ of such paths, where $\check{v}$ is a path that connects an initial state to $v$ and $\hat{v}$ is a path that connects $v$ to a final state. Also, for each path $p$ in $(V, E)$ let us denote by $p_-$ its origin and by $p_+$ is terminus, and by $\phi(p)$ let us denote its label in the Moore diagram of $\A$ and by $\psi(p)$ let us denote its label in the Moore diagram of $\Breve{\A}$. Note that for each path $p$, $\psi(p) \in L(\Breve{\A})$. Then we have the following map $\Lambda: L(\Breve{\A}) \rightarrow L(\A)$: for each path $p$ in $(V, E)$, define 
$$ \Lambda(\psi(p)) = \phi( \check{p}_- p \hat{p}_+).$$

\begin{lem}
\label{lem-51}
There exists $C\in \mathbb{N}$ such that for each path $p$ from $(V, E)$, $|p| \leq |\Lambda(p)| \leq |p|+C$.
\end{lem}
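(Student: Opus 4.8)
The plan is to unwind the definition of $\Lambda$ and observe that both inequalities reduce to counting edges. First I would record that in the base automaton $\breve{\A}$ distinct edges carry distinct labels and every state is simultaneously initial and final; consequently the labelling map $\psi$ is a bijection from the set of paths in $(V,E)$ onto $L(\breve{\A})$. This legitimizes writing $\Lambda(p)$ for $\Lambda(\psi(p))$ and reading $|p|$ as the number of edges of $p$, equivalently as the length of the word $\psi(p)$, since each edge contributes exactly one letter.

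Next, by definition $\Lambda(\psi(p)) = \phi(\check{p}_-\, p\, \hat{p}_+)$, the $\A$-label of the concatenated path that runs from an initial state to $p_-$ along $\check{p}_-$, then along $p$ from $p_-$ to $p_+$, and finally along $\hat{p}_+$ from $p_+$ to a final state. Because each edge of the Moore diagram of $\A$ is labelled by a single letter of $\Sigma$, the length of the label of any path equals its number of edges, so the length is additive under concatenation:
$$|\Lambda(p)| = |\check{p}_-| + |p| + |\hat{p}_+|.$$
The lower bound $|p| \leq |\Lambda(p)|$ is then immediate, as $|\check{p}_-|, |\hat{p}_+| \geq 0$.

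For the upper bound the key point is that $\check{p}_-$ and $\hat{p}_+$ depend only on the vertices $p_-$ and $p_+$, and that these correction paths are drawn from the fixed finite collection $\{(\check{v}, \hat{v})\}_{v \in V}$ chosen in advance, one pair per vertex; such paths exist precisely because $\A$ is essential. Since $V$ is finite, I would set
$$C = \max_{v \in V} |\check{v}| + \max_{v \in V} |\hat{v}|,$$
which is a finite natural number, and conclude $|\check{p}_-| + |\hat{p}_+| \leq C$, hence $|\Lambda(p)| \leq |p| + C$. Crucially, $C$ depends only on $\A$ and the fixed choices of $(\check{v}, \hat{v})$, not on the particular path $p$, which is exactly the uniformity the statement demands.

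I do not expect a genuine obstacle here: the whole argument rests on the additive behaviour of label length under path concatenation together with the finiteness of the vertex set. The only points that warrant a word of care are the identification of $|p|$ with $|\psi(p)|$ (valid because $\breve{\A}$ has all distinct, single-letter labels) and the observation that the correction paths range over a set indexed by the finite $V$, so that their lengths admit a uniform bound.
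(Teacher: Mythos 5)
Your argument is correct and is essentially the paper's own proof: the paper likewise dismisses the left inequality as obvious and takes the identical constant $C=\max\{|\check v| \mid v \in V\} + \max\{|\hat v| \mid v \in V\}$ for the right one. Your write-up merely spells out the additivity of label length under concatenation, which the paper leaves implicit.
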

\begin{proof}
    The left inequality is obvious. For the right inequality, one can take $C=\max\{|\check v| \mid v \in V\} + \max\{|\hat v| \mid v \in V\}$.
\end{proof}

From Lemma \ref{lem-51}, for all $n \in \mathbb{N}$, we immediately get
\begin{align}
    \label{eq-51-1}
    \Lambda(B_n(L(\Breve{\A}))) \subseteq \bigcup_{i=0}^C B_{n+i}(L(\A)).
\end{align}

\begin{lem}
\label{lem-52}
If $\A$ has bounded ambiguity, then there exists $D>0$ such that for all $w \in L(\A)$, $|\Lambda^{-1}(w)| \leq D$.
\end{lem}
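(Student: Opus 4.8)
The plan is to bound $|\Lambda^{-1}(w)|$ as a product of two quantities: the number of admissible paths of $\A$ carrying the label $w$, and the number of paths $p$ that can give rise to one and the same admissible path $\check{p}_-\,p\,\hat{p}_+$. Fix the constant $k$ witnessing that $\A$ has bounded ambiguity $k$. Since all edges of $\breve{\A}$ carry distinct labels, the label map $\psi$ is injective on paths of $(V,E)$, so I may identify a word $\psi(p)\in L(\breve{\A})$ with the path $p$ itself; under this identification $\Lambda^{-1}(w)$ is in bijection with the set of paths $p$ in $(V,E)$ for which $\phi(\check{p}_-\,p\,\hat{p}_+)=w$.

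For the first factor, note that for every such $p$ the concatenation $\check{p}_-\,p\,\hat{p}_+$ is, by construction, an admissible path of $\A$ whose $\phi$-label equals $w$ (it runs from an initial state through $p$ to a final state). By bounded ambiguity there are at most $k$ admissible paths with label $w$; call them $q_1,\dots,q_r$ with $r\le k$. Hence every $p\in\Lambda^{-1}(w)$ satisfies $\check{p}_-\,p\,\hat{p}_+=q_j$ for some $j$, and it suffices to bound, for one fixed admissible path $q$, the number of $p$ with $\check{p}_-\,p\,\hat{p}_+=q$.

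For the second factor, observe that recovering such a $p$ from $q$ amounts to deleting from $q$ a prefix coinciding with one of the fixed paths $\check{u}$ ($u\in V$) and a suffix coinciding with one of the fixed paths $\hat{v}$ ($v\in V$); the remaining middle segment is then exactly $p$, and the two cut points determine $p$ uniquely. The key finiteness observation is that the families $\{\check{u}\}_{u\in V}$ and $\{\hat{v}\}_{v\in V}$ each have at most $|V|$ members, and two distinct fixed paths that are both prefixes (resp.\ suffixes) of $q$ must have distinct lengths, since two prefixes of $q$ of equal length are equal. Consequently $q$ admits at most $|V|$ admissible prefix-choices and at most $|V|$ admissible suffix-choices, so at most $|V|^2$ decompositions $q=\check{u}\,p\,\hat{v}$, hence at most $|V|^2$ paths $p$ with $\check{p}_-\,p\,\hat{p}_+=q$.

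Combining the two factors yields $|\Lambda^{-1}(w)|\le k\,|V|^2$, so $D:=k\,|V|^2$ works. I expect the main subtlety to be the decomposition-counting step: one must argue carefully that each fixed initial- (resp.\ final-) connecting path can play the role of the deleted prefix (resp.\ suffix) in at most one way, which rests on the injectivity of $\psi$ together with the finiteness of $V$. The bounded-ambiguity hypothesis enters only through the first factor, and it is exactly what fails in its absence, since otherwise arbitrarily many admissible paths could share the label $w$ and no uniform $D$ could exist.
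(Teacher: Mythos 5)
Your proof is correct and takes essentially the same approach as the paper: both arguments bound $|\Lambda^{-1}(w)|$ by the product of the ambiguity bound and $|V|^2$, using the injection $p \mapsto \bigl((p_-,p_+),\ \check{p}_-\,p\,\hat{p}_+\bigr)$ together with the injectivity of $\psi$, and both arrive at $D=K|V|^2$. The only difference is the order of enumeration --- the paper fixes the pair of endpoints $(u,v)$ first and applies bounded ambiguity within each class, whereas you fix the admissible path with label $w$ first and then count its at most $|V|^2$ decompositions --- which is just a transposition of the same double count.
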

\begin{proof}
    Let $K>0$ be an upper bound of ambiguity of $\A$. Let $w \in L(\A)$. Also, let $(u, v) \in V \times V$. Then the number of paths $p$ in $(V, E)$ such that $p_-=u$, $p_+=v$, and  $\Lambda(\psi(p))=w$ is bounded from above by $K$. The number of pairs $(u, v) \in V \times V$ is equal to $|V|^2$. Therefore, one can take $D= K |V|^2$.
\end{proof}

\begin{lem}
\label{lem-final-5}
For all $n \in \mathbb{N}$, there exists $0 \leq c_n \leq C$ such that 
\begin{align}
\label{inq}
    |B_{n}(L(\A))| \leq  |B_n(L(\Breve{\A}))|   \leq (C+1)D|B_{n+c_n}(L(\A))|.
\end{align}
\end{lem}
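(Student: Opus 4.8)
The plan is to prove the two inequalities in the statement of the lemma separately. The left one is a direct path-counting comparison, while the right one is obtained by pushing words of $L(\Breve{\A})$ forward through the map $\Lambda$ and combining the length control of Lemma \ref{lem-51} with the finite-fiber bound of Lemma \ref{lem-52}.

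For the left inequality I would first record the defining feature of the base automaton: in $\Breve{\A}$ every state is simultaneously initial and final and distinct edges carry distinct labels, so a word of length $n$ lies in $L(\Breve{\A})$ if and only if it is $\psi(p)$ for a (unique) path $p$ of length $n$ in the underlying graph $(V,E)$. Hence $|B_n(L(\Breve{\A}))|$ equals the number of length-$n$ paths in $(V,E)$. On the other hand, every $w\in B_n(L(\A))$ is the label of at least one admissible path of length $n$ in $G_{\A}$, and the path-sets attached to distinct words are disjoint, since a path determines its label uniquely; hence $|B_n(L(\A))|$ is at most the number of admissible paths of length $n$. Since admissible paths form a subset of all length-$n$ paths in $(V,E)$, this number is at most $|B_n(L(\Breve{\A}))|$, which yields $|B_n(L(\A))|\le |B_n(L(\Breve{\A}))|$.

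For the right inequality I would apply $\Lambda$ to the set $B_n(L(\Breve{\A}))$. By the inclusion \eqref{eq-51-1}, its image is contained in $\bigcup_{i=0}^C B_{n+i}(L(\A))$, and by Lemma \ref{lem-52} the map $\Lambda$ has fibers of size at most $D$. Counting the domain through its image therefore gives $|B_n(L(\Breve{\A}))| \le D\,\bigl|\Lambda(B_n(L(\Breve{\A})))\bigr| \le D\sum_{i=0}^{C} |B_{n+i}(L(\A))|$. Choosing $c_n\in\{0,\dots,C\}$ to be an index maximizing $|B_{n+i}(L(\A))|$ over $0\le i\le C$ bounds the sum by $(C+1)\,|B_{n+c_n}(L(\A))|$, which produces the required $0\le c_n\le C$ together with the inequality $|B_n(L(\Breve{\A}))|\le (C+1)D\,|B_{n+c_n}(L(\A))|$.

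I expect the only genuinely delicate point to be the bookkeeping in the left inequality, namely justifying that passing from $\A$ to its base automaton can only increase the count: one must carefully separate the exact bijection between words of $L(\Breve{\A})$ and paths of $(V,E)$ from the merely surjective, ambiguity-inflated correspondence between admissible paths of $\A$ and words of $L(\A)$. The right inequality is then routine once one notes that the at-most-$D$-to-one bound of Lemma \ref{lem-52}, being global over $L(\A)$, in particular restricts to the length-$n$ slice, so that the union bound and the passage from a sum of $C+1$ terms to $(C+1)$ times a maximal term go through without further hypotheses.
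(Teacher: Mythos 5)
Your proof is correct and follows essentially the same route as the paper: the right inequality is obtained exactly as in the paper by combining the containment \eqref{eq-51-1} with the fiber bound of Lemma \ref{lem-52} and then choosing $c_n$ to maximize $|B_{n+i}(L(\A))|$ over $0\leq i\leq C$. The only difference is that you spell out the path-counting justification of the left inequality, which the paper simply calls trivial; your bookkeeping there (bijection between words of $L(\Breve{\A})$ and paths, versus the injective word-to-chosen-path correspondence for $\A$) is sound.
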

\begin{proof}
    The left inequality of \eqref{inq} is trivial. For the right one, note that for all $n \in \mathbb{N}$, by Lemma \ref{lem-52} and by \eqref{eq-51-1}, we get
    $$D \sum_{i=0}^C  |B_{n+i}(L(\A))| \geq  |B_n(L(\Breve{\A}))|.$$
    Therefore, one can take $c_n$ in \eqref{inq} to be such that 
    $$|B_{n+c_n}(L(\A))| = \max\{|B_{n+i}(L(\A))| \mid 0\leq i \leq C \}.$$
\end{proof}

From Lemma \ref{lem-final-5} we immediately get that there exists a constant $C'>0$ and a sequence $\{c_n\}_{n=1}^{\infty}$, $0 \leq c_n \leq C'$, such that 

$$ \frac{\log  |B_{n}(L(\A))|}{n} \leq   \frac{\log  |B_n(L(\Breve{\A}))| }{n} \leq \frac{\log  C'|B_{n+c_n}(L(\A))|}{n} .$$

Proposition \ref{prop-aux} follows immediately from the last inequality.
%%%%%%%%%%%%%%%%%%%%%%%%%%
\section{Computing cogrowth series of $H$}\label{sec:computing_growth}\label{S:6_cogrow}
With an arbitrary subgroup $H$ of $F_m$ one associates the \emph{growth function} $$\gamma_H(n) = |H_n|,$$ where $H_n$ is the set of elements in $H$ of length $n$ with respect to the basis $A$ of $F_m$ i.e. length of the element is the length of the reduced word from $\Sigma^*$, where $\Sigma = A \cup A^{-1}$ representing the element. Also, following \cite{MR599539Gri1980} we introduce the \emph{cogrowth series }
\begin{equation}
    H(z) = \sum_{n = 0}^{\infty} |H_n|z^n \label{gri_cogrowth}
\end{equation}
The upper limit 
\begin{equation}
    \alpha_H = \limsup_{n \rightarrow \infty} |H_n|^{1/n} \label{growth_rate}
\end{equation}
is called the \emph{growth rate} of $H$ with respect to the basis $A$ of $F_m$. The radius of convergence of the series (\ref{gri_cogrowth}) is $$R = \displaystyle\frac{1}{\alpha_H}$$
and $$R \geq \displaystyle \frac{1}{2m-1} \textrm{ as, for } H = F_m,~~ |H_n| = 2m(2m-1)^{n-1}  \textrm{ if } n \geq 1.$$

The cogrowth series (\ref{gri_cogrowth}) represents a function of complex variable $z \in \mathbb{C}$ analytic at disc around $z = 0$ of radius $R \geq \displaystyle \frac{1}{2m-1}$. The major questions are: Under what conditions $H(z)$ is rational, algebraic and belongs to distinguished class of analytic functions, like for instance the class of $D$-finite functions studied in \cite{MR3478442pak2016}.

Let us recall the second author's argument from \cite{MR599539Gri1980} on rationality of $H(z)$ (when $H$ is finitely generated) using a Nielsen system of generators of $H$.
%%%%%%%%%%%%%%%%%%%%%%%%%%%%%
%%%%%%%%%%%%%%%%%%%%%%%%%%%%%

\subsection{The Nielsen basis approach}\label{S:6.1_Niel_basis}
Let $H = \langle w_1,\cdots,w_k\rangle$ and let $\{w_i\}_{i=1}^k$ be a Nielsen system of generators. Given two reduced words $u, v \in \displaystyle\Sigma^*$ denote by $\beta(u,v)$ the number of $a$-symbols (by $a$-symbols we mean elements of $\Sigma$) which will be cancelled when reducing the product $u\cdot v$. For each $i, 1\leq i \leq k$ and $\e \in \{-1,1\}$ let $H_n^{i,\e}$ be the set of words of length $n$ in $H$ that can be presented by $S$-reduced product of generators from $S\cup S^{-1}, S = \{w_1,\cdots,w_k\}$ ending with $w_i^{\e}$. That is
$$u \in H_n^{i,\e} \Longleftrightarrow u = red(w_{i_1}^{\e_1}\cdots w_{i_l}^{\e_l}~w_{i}^{\e})$$
for some $i_1,\cdots,i_l,\e_1,\cdots,\e_l \in \{-1,1\}$ and in the last product none of the factors $w_{i_j}^{\e_j}$ is inverse to the previous or next factor. Let us recall the corollary of the Statement 3.6 of \cite{MR599539Gri1980}.
\begin{thm}
\label{cor_statement 3.2} If $H$ is a finitely generated subgroup of $F_m$, then $H(z)$ is rational.
\end{thm}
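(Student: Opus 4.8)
The plan is to set up a finite linear system for refined cogrowth generating functions and solve it by Cramer's rule, so that rationality comes out automatically. For $1\le i\le k$ and $\epsilon\in\{-1,1\}$ put
$$H^{i,\epsilon}(z)=\sum_{n=0}^{\infty}\left|H_n^{i,\epsilon}\right|z^n,$$
the series counting those elements of $H$ whose (unique) $S$-reduced expression over the Nielsen basis $S=\{w_1,\dots,w_k\}$ ends in $w_i^{\epsilon}$. Since the Nielsen conditions force $S$ to be a free basis of $H$, every $1\neq u\in H$ has a unique such expression, and I would first record the clean bookkeeping identity
$$H(z)=1+\sum_{i=1}^{k}\sum_{\epsilon\in\{-1,1\}}H^{i,\epsilon}(z),$$
the summand $1$ accounting for the identity element. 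It therefore suffices to prove that each $H^{i,\epsilon}(z)$ is rational.

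Next I would derive a recursion. Every $u\in H_n^{i,\epsilon}$ arises either as $u=w_i^{\epsilon}$ itself or as $u=red(v\,w_i^{\epsilon})$ for a strictly shorter $v\in H$ whose $S$-reduced expression ends in some $w_j^{\delta}$ with $w_j^{\delta}\neq(w_i^{\epsilon})^{-1}$; uniqueness of $S$-reduced forms makes this correspondence a bijection. The crucial point, and the place where the Nielsen property is really used, is that the number of symbols cancelled in forming $red(v\,w_i^{\epsilon})$ equals $\beta(w_j^{\delta},w_i^{\epsilon})$ and depends only on the terminal factor $w_j^{\delta}$, not on the rest of $v$. Indeed, conditions (\ref{nielsen1})--(\ref{nielsen2}) yield the standard cancellation structure of Nielsen-reduced products: each generator has a nonempty \emph{middle} that survives in every reduced product, and its right half can only be consumed by the factor immediately to its right. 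As the terminal factor $w_j^{\delta}$ of $v$ has no factor to its right, its right half sits intact at the end of $v$, so the cancellation against $w_i^{\epsilon}$ is confined to this right half and equals exactly $\beta(w_j^{\delta},w_i^{\epsilon})$. Hence $|u|=|v|+|w_i|-2\beta(w_j^{\delta},w_i^{\epsilon})$, and (\ref{nielsen1}) gives $\beta(w_j^{\delta},w_i^{\epsilon})\le|w_i|/2$, so the exponent shift $|w_i|-2\beta(w_j^{\delta},w_i^{\epsilon})$ is a nonnegative integer.

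Translating this bijection into generating functions (matching a word $v\in H^{j,\delta}_{\,n-|w_i|+2\beta(w_j^{\delta},w_i^{\epsilon})}$ with its image of length $n$, which multiplies the contribution by $z^{\,|w_i|-2\beta(w_j^{\delta},w_i^{\epsilon})}$) gives the linear system
$$H^{i,\epsilon}(z)=z^{|w_i|}+\sum_{\substack{(j,\delta)\\ w_j^{\delta}\neq(w_i^{\epsilon})^{-1}}}z^{\,|w_i|-2\beta(w_j^{\delta},w_i^{\epsilon})}\,H^{j,\delta}(z),$$
a system of $2k$ equations in the $2k$ unknowns $H^{i,\epsilon}(z)$ whose coefficients are genuine polynomials in $z$. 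Writing it as $\left(I-M(z)\right)\mathbf{H}(z)=\mathbf{b}(z)$ with $M(z)$ and $\mathbf{b}(z)$ having polynomial entries, Cramer's rule expresses each $H^{i,\epsilon}(z)$ as a quotient of determinants of polynomial matrices, hence as a rational function; summing then shows $H(z)$ is rational.

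The main obstacle is precisely the locality-of-cancellation claim in the second paragraph: one must prove carefully, from the Nielsen inequalities, that appending $w_i^{\epsilon}$ to an $S$-reduced word $v$ produces an amount of cancellation that sees only the terminal factor $w_j^{\delta}$. Everything else -- the uniqueness of $S$-reduced forms, the length/exponent bookkeeping, and the final passage through Cramer's rule -- is routine once this memorylessness is in hand.
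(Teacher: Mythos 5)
Your proposal is correct and follows essentially the same route as the paper: refine the cogrowth series by the terminal Nielsen generator, use the Nielsen cancellation conditions to obtain a finite linear system with polynomial coefficients in $z$, and solve it by Cramer's rule. The only differences are cosmetic — the paper counts all cancelled symbols in $\beta$ (so its exponent is $|w_i^{\e}|-\beta$) while you count the one-sided cancellation length (hence $|w_i|-2\beta$), and the paper simply cites the linear system from \cite{MR599539Gri1980} where you sketch its derivation, correctly isolating the locality-of-cancellation property as the step requiring the Nielsen inequalities.
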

\proof
Let $H_i^{\e}(z) = \sum_{n=0}^{\infty}|H_n^{i,\e}|z^n$.
Then $$H(z) = 1 + \sum_{i,\e}H_i^{\e}(z).$$
The equation $(3.5)$ from \cite{MR599539Gri1980} shows that the functions $H_i^{\e}(z)$ satisfy the system of linear equations. 
\begin{equation}
    H_i^{\e}(z) = z^{|w_i^{\e}|} + \displaystyle \sum_{j,\e'} z^{|w_i^{\e}|-\beta(w_j^{\e'},w_i^{\e})} H_j^{\e'}(z) \label{gri_system}
\end{equation}
Taking the summation term to the left, we can rewrite the above system as follows
\begin{eqnarray}
 BY = Z
\end{eqnarray}
where $B$ is a $2k\times 2k$ matrix with $(i,\e),(j,\e')$-th entry
$$B\left((i,\e),(j,\e')\right) = \displaystyle \chi_i(j) - z^{|w_i^{\e}|-\beta(w_j^{\e'},w_i^{\e})},$$
where $$\chi_i(j) = \left\{ \begin{array}{cc}
    1  & \textnormal{ if } i = j \\
    0  & \textnormal{ otherwise}
\end{array} \right.$$
$Y$ and $Z$ are $2k\times 1$ column vectors whose $i$-th entries are
$$Y_i = H_i^{\e}(z),~~ Z_i = z^{|w_i^{\e}|},$$ respectively. Observe that, when $z \in \R$ and $|z| < 1$, the determinant of the matrix $B$ is non-zero. Hence the system (\ref{gri_system}) has a unique solution. Solving this system by standard methods (for instance using Cramer's rule) we get a solution. 
$$H_i^{\e}(z) = \displaystyle \frac{P_i^{\e}(z)}{Q_i^{\e}(z)}, i = 1,\cdots,k; \e \in \{-1,1\},$$
where $P_i^{\e}, Q_i^{\e}$ are polynomials and hence we get the rational expression for $H(z)$\\
$$H(z) = 1 + \displaystyle \sum_{i,\e} \frac{P_i^{\e}(z)}{Q_i^{\e}(z)}.$$ \qed
\subsection{The finite automata approach}\label{S:6.2_auto}
In the previous section we have seen the construction of the DFA $\D_H$. It is a very old observation, going back to Chomsky and Sch\"utzenberger and even to A. Kolmogorov in view of his theory of finite Markov chains, that the growth series $L_H(z)$ of the language $L(\D_H) = L_H$ which by Proposition \ref{prop-D_H-is-essntial} from the previous sections coincides  with $H(z)$ and are rational. Moreover, it can be computed using a standard method which often is called the transfer matrix method. See page 573 of \cite{MR2868112Stanley2012} or Section V.5 and in particular Proposition V.6 of \cite{MR2483235flajolet2009}. In this section, using the DFA $\D_H$ we shall compute the cogrowth series $H(z)$. At the end, we shall present the computations of growth using Nielsen set of generators.

Let $M$ be the adjacency matrix of the labelled directed graph $G$ with $t$ vertices. Assume that, for every vertex of the graph $G$, all outgoing edges carry distinct labels. Let $M^n$ be the $n$th power of $M$. It is well known that the $(i,j)$th entry of $M^n$ which we denote by $M^n(i,j)$ is just the number of paths of length $n$ from the $i$th vertex to the $j$th vertex of $G$. To evaluate the $M^n(i,j)$, we shall use the transfer matrix method.  This method uses linear algebra to analyze the behavior of the $M^n(i,j)$. Following \cite{MR2868112Stanley2012}, we define the  growth series (also known as generating function) of paths in $G$ from $i$ to $j$. 
$$ \gamma_{ij}(G,z) = \sum_{n \geq 0} M^n(i,j)z^n.$$
Observe that $\gamma_{ij}(G,z)$ is the $(i,j)$th entry of the matrix $$\displaystyle \sum_{n\geq 0} M^n z^n = \left(I - z M\right)^{-1}$$ where $I$ is the identity matrix of dimension $t\times t$. In order to compute the $(i,j)$th entry $\gamma_{ij}(G,z)$, we recall Theorem 4.7.2 of \cite{MR2868112Stanley2012}. 
\begin{thm}
\label{4.7.2} The growth series $\gamma_{ij}(G,z)$ is given by
\begin{equation}
    \gamma_{ij}(G,z) = \displaystyle \frac{(-1)^{i+j}\det\left( I - zM : j, i\right)}{\det\left(I - zM \right)}
\end{equation}
where $(B : j, i)$ denotes the minor obtained by removing the $j$th row and $i$th column of $B.$ Thus in particular $\gamma_{ij}(G,z)$ is a rational function of $z$ whose degree is strictly less than the multiplicity $n_0$ of $0$ as an eigenvalue of $M.$
\end{thm}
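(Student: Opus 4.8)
The plan is to read off $\gamma_{ij}(G,z)$ as a single entry of the inverse matrix $(I-zM)^{-1}$ and then apply Cramer's rule, after which both the formula and the degree bound drop out from elementary determinant estimates. First I would recall the identity already established just above the statement, namely that $\gamma_{ij}(G,z)$ is the $(i,j)$th entry of $\sum_{n\geq 0}M^n z^n=(I-zM)^{-1}$. Since $\det(I-zM)$ evaluated at $z=0$ equals $\det I=1$, this polynomial is not identically zero, so $(I-zM)^{-1}$ is well defined as a matrix of rational functions (equivalently, as a matrix of formal power series in $z$), and the manipulation is legitimate.

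Next I would invoke the adjugate formula $(I-zM)^{-1}=\frac{1}{\det(I-zM)}\,\mathrm{adj}(I-zM)$. By definition the $(i,j)$th entry of the adjugate is the $(j,i)$th cofactor of $I-zM$, that is $(-1)^{i+j}\det(I-zM:j,i)$, where the minor is obtained by deleting the $j$th row and the $i$th column. Equating the $(i,j)$th entries on the two sides yields exactly the displayed formula. Rationality is then immediate, since the numerator and denominator are both polynomials in $z$.

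For the degree statement I would estimate the two polynomials separately. Writing $\lambda_1,\dots,\lambda_t$ for the eigenvalues of $M$ listed with algebraic multiplicity, we have $\det(I-zM)=\prod_{k=1}^t(1-\lambda_k z)$; the $n_0$ factors arising from zero eigenvalues equal $1$, while the remaining $t-n_0$ factors are genuinely linear, so $\deg_z\det(I-zM)=t-n_0$ exactly. On the other hand every entry of $I-zM$ is a polynomial in $z$ of degree at most $1$, so the $(t-1)\times(t-1)$ minor $\det(I-zM:j,i)$ is a sum of products of $t-1$ such entries and therefore has degree at most $t-1$. Interpreting the degree of a rational function $P/Q$ as $\deg P-\deg Q$ (a quantity invariant under cancellation of common factors), we obtain
$$\deg\gamma_{ij}(G,z)\leq (t-1)-(t-n_0)=n_0-1<n_0,$$
as claimed.

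The routine ingredients here are Cramer's rule and the two degree counts. The one point demanding genuine care is the degree claim: one must verify that passing from $M$ to $\det(I-zM)$ lowers the degree by exactly the \emph{algebraic} multiplicity $n_0$ of the eigenvalue $0$ (this is what makes $\deg_z\det(I-zM)=t-n_0$ hold with equality rather than merely an inequality), and one must fix the convention that the degree of a rational function means $\deg(\text{numerator})-\deg(\text{denominator})$. With the eigenvalue bookkeeping and this convention settled, the strict inequality $\deg\gamma_{ij}<n_0$ is forced, and the minor bound $\deg P\le t-1$ needs no refinement.
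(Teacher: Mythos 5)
Your proof is correct. Note, however, that the paper itself offers no proof of this statement: it is recalled verbatim as Theorem 4.7.2 of the cited reference of Stanley, so there is no internal argument to compare against. Your route --- reading $\gamma_{ij}(G,z)$ off as the $(i,j)$th entry of $(I-zM)^{-1}$, applying the adjugate/Cramer formula to get the cofactor expression, and then counting degrees via $\det(I-zM)=\prod_{k}(1-\lambda_k z)$ (degree exactly $t-n_0$) against the minor's bound $t-1$ --- is precisely the standard proof, and indeed the one given in Stanley's book; the two points you flag as needing care (the exact degree of $\det(I-zM)$ in terms of the algebraic multiplicity of the zero eigenvalue, and the convention $\deg(P/Q)=\deg P-\deg Q$) are exactly the right ones, and both are handled correctly.
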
 

Let $L_H = L(\D_H)$ be the language accepted by the DFA $\D_H$ constructed in the previous Section. Let $M_{\D_H}$ be the adjacency matrix of the Moore diagram of $\D_H$ and $L_H(z)$ be the growth of $L_H$, i.e. $$L_H(z) = \displaystyle \sum_{w \in L_H} z^{|w|}. $$
Recall that $\D_H$ has $|Q_{\D_H}| = 1+\displaystyle\sum_{v \in \widehat{V}}\deg(v)$ states. In our numeration of states of $\D_H$ we start with the initial state $q^*$ first and then the states $(v_k,a_i^{\e_i})$ where $k = 0,\cdots,\left(|\widehat{V}|-1\right), i \in \{1,\cdots,m\}$ and $\e_i \in \{1,-1\}$. Let $\initial$ be the standard unit row vector of dimension $1\times |Q_{\D_H}|$ and let the column vector $\final$ be the characteristic vector of the set of final states of dimension $|Q_{\D_H}|\times 1$. 
{\prop \label{pro:cogrow_from_auto} \begin{equation}
      H(z) = L_H(z) = \initial\cdot\left(I - z M_{\D_H}\right)^{-1}\cdot\final, \label{transfer}
\end{equation}
where $I$ is the identity matrix of order $|Q_{\D_H}|\times|Q_{\D_H}|$.}
\begin{proof} Recall that $L(\D_H) = L_H = $ set of reduced elements of $H$ which implies that $H(z) = L_H(z)$. For every $w \in L_H$ of length $n \geq 0$ we have a unique admissible path $p$ in the Moore diagram $G_{\D_H}$ of $\D_H$ such that $w = l(p)$. Therefore, we write the growth series $L_H(z)$ as
    \begin{eqnarray}
    H(z) = L_H(z) &=& \displaystyle \sum_{q \in \F_{\D_H}} \displaystyle \gamma_{q^*q}(G_{\D_H},z) \nonumber\\
              &=& \initial\cdot\left(I - z M_{\D_H}\right)^{-1}\cdot\final \nonumber.
    \end{eqnarray}
\end{proof}
{\rem \label{growth_rate=log_entropy} Observe that from (\ref{eqn:entropy_def}) and (\ref{growth_rate}) one deduces $$ \alpha_H = e^{h(L_H)}, $$ where $h(L_H)$ is the entropy of $L_H$.}
%%%%%%%%%%%%%
\section{Examples}\label{S:7_exa}
Let $H$ be a nontrivial finitely generated subgroup of $F_m$. In this section we shall compute the cogrowth $H(z)$ of $H$.
\subsection{Computations of cogrowth using DFA $\D_H$}\label{section7.1}
In each of the examples below, we consider DFA $\D_H$ that recognize language $L_H$ of reduced elements of $H$. Let $M_{\D_H}$ be the adjacency matrix of the Moore diagram of $\D_H$.
\begin{enumerate}
    \item Let $H$ be a finite index subgroup of $F_m$.
        \begin{enumerate}
            \item $H = \langle a^2, b, c, aba^{-1}, aca^{-1}\rangle$ is a subgroup of $F_3$.
            In this case, the core $\Delta_H$ is a complete graph. i.e. for each vertex there are three outgoing and three incoming edges labelled by $a,b,c$ and their inverses, respectively. Hence $\Delta_H$ coincides with the Schreier graph $\Gamma$ of $H$. The index of $H$ in $F_3$ $ = |\widehat{V}| = 2$. Moreover, any vertex of $\Delta_H$ can be considered as a root vertex, which implies that $H$ in fact is a normal subgroup. The diagram of core $\Delta_H$ and the automaton $\D_H$ are shown in the Figures (\ref{fig:core_normal1}) and (\ref{fig:auto_normal1}), respectively. The adjacency matrix $M_{\D_H}$ of $G_{\D_H}$ is given below
                 $$ M_{\D_H} = \left(
\begin{array}{ccccccccccccc}
 0 & 0 & 0 & 1 & 1 & 1 & 1 & 1 & 1 & 0 & 0 & 0 & 0 \\
 0 & 0 & 0 & 1 & 1 & 1 & 1 & 1 & 0 & 0 & 0 & 0 & 0 \\
 0 & 0 & 0 & 1 & 1 & 1 & 1 & 0 & 1 & 0 & 0 & 0 & 0 \\
 0 & 0 & 0 & 1 & 0 & 1 & 1 & 1 & 1 & 0 & 0 & 0 & 0 \\
 0 & 0 & 0 & 0 & 1 & 1 & 1 & 1 & 1 & 0 & 0 & 0 & 0 \\
 0 & 0 & 0 & 1 & 1 & 1 & 0 & 1 & 1 & 0 & 0 & 0 & 0 \\
 0 & 0 & 0 & 1 & 1 & 0 & 1 & 1 & 1 & 0 & 0 & 0 & 0 \\
 0 & 0 & 1 & 0 & 0 & 0 & 0 & 0 & 0 & 1 & 1 & 1 & 1 \\
 0 & 1 & 0 & 0 & 0 & 0 & 0 & 0 & 0 & 1 & 1 & 1 & 1 \\
 0 & 1 & 1 & 0 & 0 & 0 & 0 & 0 & 0 & 1 & 0 & 1 & 1 \\
 0 & 1 & 1 & 0 & 0 & 0 & 0 & 0 & 0 & 0 & 1 & 1 & 1 \\
 0 & 1 & 1 & 0 & 0 & 0 & 0 & 0 & 0 & 1 & 1 & 1 & 0 \\
 0 & 1 & 1 & 0 & 0 & 0 & 0 & 0 & 0 & 1 & 1 & 0 & 1 \\
\end{array}
\right)$$
Applying formula (\ref{transfer}) we get
                $$H(z) = \displaystyle \frac{(1+z)\left(1-4z+5 z^2\right)}{(1-5z)\left(1-2z+5z^2\right)}
                    \textnormal{ and  } \alpha_H = 5.$$
\begin{figure}[!htb]
       \begin{subfigure}[b]{0.35\textwidth}
        \centering
        \begin{tikzpicture}[scale=1,decoration={markings, mark= at position 0.5 with {\arrow{stealth}}}]
                 \node[state] (q0) {$v_0$};
                 \tikzstyle{knode}=[circle,draw=black,thick,text
                    width = 3 pt,align=center,inner sep=1pt]
                 \node[state, above of=q0] (q1) {$v_1$};
                 \draw   (q0) edge[ultra thick, bend left, left,postaction={decorate}] node{$a$} (q1)
                         (q1) edge[bend left, left,postaction={decorate}] node{$a$} (q0)
                         ;
                         \path (q1) edge [loop right, right] node[right]{$b$}  (q1);
                         \path (q1) edge [loop left, left] node[left]{$c$} (q1);
                        \path (q0) edge [loop right, right] node[right]{$b$}  (q0);
                        \path (q0) edge [loop left, left] node[left]{$c$} (q0);
                        \end{tikzpicture}
                 \caption{}
        \label{fig:core_normal1}
       \end{subfigure}
       \hfill
       \begin{subfigure}[b]{0.65\textwidth}
       \centering
       \includegraphics[width=\textwidth]{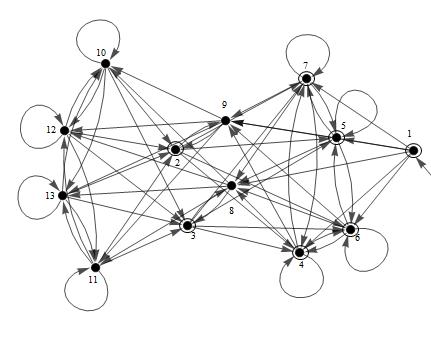}
       \caption{}
        \label{fig:auto_normal1}
       \end{subfigure}
       \caption{\eqref{fig:core_normal1} The core graph $\Delta_H$ of $H$. The highlighted spanning tree $T$ in $\Delta_H$ can be used to find a free basis $\{a^2, b, c, aba^{-1}, aca^{-1}\}$ of $H$. \eqref{fig:auto_normal1} is the Moore diagram of $\D_H$. The initial state has label $1$ and it is indicated by an incoming arrow. The final states of $\D_{H}$ have labels $1,2,3,4,5,6$ and $7$, respectively and they are indicated by the circles around the states.}
        \end{figure}
    \item $H = \langle a^3,ab,ab^{-1},a^{-1}ba\rangle$ is a subgroup of $F_2$.
        In this case, the core $\Delta_H$ coincides with the Schreier graph $\Gamma$ of $H$. The index of $H$ in $F_2 = |\widehat{V}| = 3.$ 
        Moreover, any vertex of $\Delta_H$ can not be considered as a root vertex, and hence $H$ is not a normal subgroup. See Figures \eqref{fig:core_index3} and \eqref{fig:auto_index3} for the core $\Delta_H$ and the diagram of the automaton $\D_H$, respectively. The adjacency matrix $M_{\D_H}$ is
              $$ M_{\D_H} = \left(
\begin{array}{ccccccccccccc}
 0 & 0 & 0 & 0 & 0 & 1 & 0 & 1 & 1 & 0 & 1 & 0 & 0 \\
 0 & 0 & 0 & 0 & 0 & 1 & 0 & 1 & 1 & 0 & 0 & 0 & 0 \\
 0 & 0 & 0 & 0 & 0 & 0 & 0 & 1 & 1 & 0 & 1 & 0 & 0 \\
 0 & 0 & 0 & 0 & 0 & 1 & 0 & 1 & 0 & 0 & 1 & 0 & 0 \\
 0 & 0 & 0 & 0 & 0 & 1 & 0 & 0 & 1 & 0 & 1 & 0 & 0 \\
 0 & 0 & 0 & 1 & 1 & 0 & 0 & 0 & 0 & 1 & 0 & 0 & 0 \\
 0 & 0 & 1 & 1 & 1 & 0 & 0 & 0 & 0 & 0 & 0 & 0 & 0 \\
 0 & 0 & 1 & 1 & 0 & 0 & 0 & 0 & 0 & 1 & 0 & 0 & 0 \\
 0 & 0 & 1 & 0 & 1 & 0 & 0 & 0 & 0 & 1 & 0 & 0 & 0 \\
 0 & 1 & 0 & 0 & 0 & 0 & 0 & 0 & 0 & 0 & 0 & 1 & 1 \\
 0 & 0 & 0 & 0 & 0 & 0 & 1 & 0 & 0 & 0 & 0 & 1 & 1 \\
 0 & 1 & 0 & 0 & 0 & 0 & 1 & 0 & 0 & 0 & 0 & 1 & 0 \\
 0 & 1 & 0 & 0 & 0 & 0 & 1 & 0 & 0 & 0 & 0 & 0 & 1 \\
\end{array}
\right)$$
 Applying formula (\ref{transfer}) we get
 $$ H(z) =  \displaystyle \frac{(1+z)(1-2z+5z^2-6z^3+9z^4)}{(1-3z) \left(1-z+3z^2\right) \left(1+3z+3z^2\right)} \textnormal{ and } \alpha_H = 3.$$
     \begin{figure}[!htb]
       \begin{subfigure}[b]{0.3\textwidth}
        \centering
        \begin{tikzpicture}[scale=1,,decoration={markings, mark= at position 0.5 with {\arrow{stealth}}}]
                 \node[state] (q0) {$v_0$};
                 \node[state, above of =q0] (q1) {$v_1$};
                 \node[state, right of=q0] (q2) {$v_2$};
                 \draw   (q0) edge[ultra thick, bend left, left,postaction={decorate}] node{$a$} (q1)
                         (q0) edge[bend right, right,postaction={decorate}] node{$b$} (q1)
                         (q1) edge[below, right,postaction={decorate}] node{$b$} (q0)
                         (q2) edge[ultra thick, below,postaction={decorate}] node{a} (q0)
                         (q1) edge[right, above,postaction={decorate}] node{$a$} (q2)
                         (q2) edge[loop above, above] node{$b$} (q2)
                         ;
                 \end{tikzpicture}
                 \caption{}
        \label{fig:core_index3}
       \end{subfigure}
       \hfill
       \begin{subfigure}[b]{.6\textwidth}
       \centering
       \includegraphics[width=\textwidth]{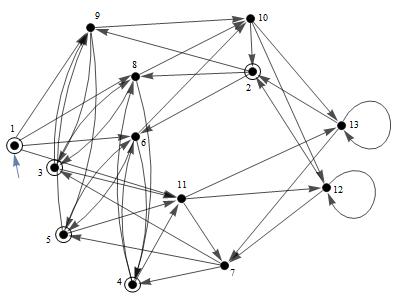}
       \caption{}
        \label{fig:auto_index3}
       \end{subfigure}
       \caption{\eqref{fig:core_index3} The core graph $\Delta_H$ of $H = \langle a^3,ab,ab^{-1},a^{-1}ba\rangle$. \eqref{fig:auto_index3} is the Moore diagram of $\D_H$.}
    \end{figure}
        \end{enumerate}
        \item Let $H$ be an infinite index subgroup of $F_m$.
        \begin{enumerate}
            \item $H = \langle aba^{-1}, aca^{-1} \rangle$ is a subgroup of $F_3$. 
        In this case, the core $\Delta_H$ is incomplete graph and it is a subgraph of $\Gamma.$ So $H$ is an infinite index subgroup of $F_3$. See Figures (\ref{fig:1core_inf_index1}) and (\ref{fig:auto_inf_index1}) for the core $\Delta_H$ and the diagram of the automaton $\D_H$, respectively. The $\deg(v_0) = 1.$ The adjacency matrix $M_{\D_H}$ of $G_{\D_H}$ is 
              $$ M_{\D_H} = \left(
\begin{array}{ccccccc}
 0 & 0 & 1 & 0 & 0 & 0 & 0 \\
 0 & 0 & 0 & 0 & 0 & 0 & 0 \\
 0 & 0 & 0 & 1 & 1 & 1 & 1 \\
 0 & 1 & 0 & 1 & 0 & 1 & 1 \\
 0 & 1 & 0 & 0 & 1 & 1 & 1 \\
 0 & 1 & 0 & 1 & 1 & 1 & 0 \\
 0 & 1 & 0 & 1 & 1 & 0 & 1 \\
\end{array}
\right)$$
       Applying formula (\ref{transfer}) we get
       $$ H(z) =  \displaystyle 1+ \frac{4z^3}{1-3z} \textnormal{ and } \alpha_H =  3.$$
       \begin{figure}[!htb]
\begin{subfigure}[b]{0.5\textwidth}
        \centering
         \begin{tikzpicture}[scale=1,decoration={markings, mark= at position 0.5 with {\arrow{stealth}}}]
        \tikzstyle{knode}=[circle,draw=black,thick,text
                width = 3 pt,align=center,inner sep=1pt,fill]
                \node (q1) at (0,-2.8) [state] {$v_1$};
                \node (q2) at (0,0) [state] {$v_0$};
                \path (q1) edge [loop right, right] node[right]{$b$}  (q1);
                \path (q1) edge [loop left, left] node[left]{$c$} (q1);
            \draw (q2) edge [ultra thick, below,postaction={decorate}] node[right]{$a$}  (q1)
      ;
       \end{tikzpicture}
                 \caption{}
        \label{fig:1core_inf_index1}
       \end{subfigure}
       \hfill 
       \begin{subfigure}[b]{.5\textwidth}
       \centering
       \includegraphics[width=\textwidth]{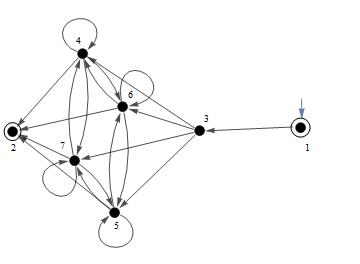}
       \caption{ }
        \label{fig:auto_inf_index1}
       \end{subfigure}
       \caption{(\ref{fig:1core_inf_index1}) is the core $\Delta_H$ of $H = \langle aba^{-1}, aca^{-1}\rangle$. (\ref{fig:auto_inf_index1}) is the Moore diagram of $\D_H$.}
        \end{figure}
    \item $H = \langle b^2, bab^{-1}a^{-1}, a^3\rangle$ is subgroup of $F_2$. 
    In this case, the core $\Delta_H$ is incomplete and it is a subgraph of $\Gamma$. So $H$ is an infinite index subgroup of $F_2$. See Figures (\ref{fig:core_inf_index2}) and (\ref{fig:auto_inf_index2}) for the core $\Delta_H$ and the diagram of the automaton $\D_H$, respectively. The $\deg(v_0) = 4.$ The adjacency matrix $M_{\D_H}$ of $G_{\A}$ is
              $$ M_{\D_H} = \left(
\begin{array}{ccccccccccccccc}
 0 & 0 & 0 & 0 & 0 & 0 & 1 & 1 & 0 & 0 & 0 & 0 & 0 & 1 & 1 \\
 0 & 0 & 0 & 0 & 0 & 0 & 0 & 1 & 0 & 0 & 0 & 0 & 0 & 1 & 1 \\
 0 & 0 & 0 & 0 & 0 & 0 & 1 & 0 & 0 & 0 & 0 & 0 & 0 & 1 & 1 \\
 0 & 0 & 0 & 0 & 0 & 0 & 1 & 1 & 0 & 0 & 0 & 0 & 0 & 1 & 0 \\
 0 & 0 & 0 & 0 & 0 & 0 & 1 & 1 & 0 & 0 & 0 & 0 & 0 & 0 & 1 \\
 0 & 1 & 0 & 0 & 0 & 0 & 0 & 0 & 0 & 0 & 0 & 0 & 0 & 0 & 0 \\
 0 & 0 & 0 & 0 & 0 & 0 & 0 & 0 & 1 & 0 & 0 & 0 & 0 & 0 & 0 \\
 0 & 0 & 0 & 0 & 0 & 1 & 0 & 0 & 0 & 0 & 0 & 1 & 0 & 0 & 0 \\
 0 & 0 & 1 & 0 & 0 & 0 & 0 & 0 & 0 & 0 & 0 & 1 & 0 & 0 & 0 \\
 0 & 0 & 1 & 0 & 0 & 1 & 0 & 0 & 0 & 0 & 0 & 0 & 0 & 0 & 0 \\
 0 & 0 & 0 & 0 & 0 & 0 & 0 & 0 & 0 & 1 & 0 & 0 & 0 & 0 & 0 \\
 0 & 0 & 0 & 0 & 0 & 0 & 0 & 0 & 0 & 0 & 0 & 0 & 1 & 0 & 0 \\
 0 & 0 & 0 & 1 & 1 & 0 & 0 & 0 & 0 & 0 & 0 & 0 & 0 & 0 & 0 \\
 0 & 0 & 0 & 1 & 0 & 0 & 0 & 0 & 0 & 0 & 1 & 0 & 0 & 0 & 0 \\
 0 & 0 & 0 & 0 & 1 & 0 & 0 & 0 & 0 & 0 & 1 & 0 & 0 & 0 & 0 \\
\end{array}
\right)$$
       Applying formula (\ref{transfer}) we get
       \begin{flalign}
        H(z) = \displaystyle\frac{1+z+3z^2+3z^3+5z^4+5z^5+6z^6+6z^7+4z^8+4z^9}{1+z+z^2-z^3-5z^4-13z^5-16z^6-20 z^7-12z^8-12z^9} && \nonumber
        \end{flalign}
        and $\alpha_H = 1.88233.$
        \begin{figure}[!htb]
    \begin{subfigure}[b]{0.5\textwidth}
        \centering
        \begin{tikzpicture}[scale=0.45,decoration={markings, mark= at position 0.5 with {\arrow{stealth}}}]
        \node[state] (q0) {$v_0$};
        %\node[state, initial below, accepting] (q0) {$v_0$};
        \node[state, above right of=q0] (q1) {$v_1$};
        \node[state, above left of=q1] (q2) {$v_2$};
        \node[state, left of=q2] (q3) {$v_3$};
        \node[state, below of=q3] (q4) {$v_4$};
        \draw
               (q0) edge[ultra thick, above, right,postaction={decorate}] node{$a$} (q2)
               (q0) edge[ultra thick, left, above,postaction={decorate}] node{$b$} (q4)
               (q1) edge[ultra thick, below, right,postaction={decorate}] node{$a$}(q0)
               (q2) edge[below right, right,postaction={decorate}] node{$a$} (q1)
               (q2) edge[left, above,postaction={decorate}] node{$b$} (q3)
               (q4) edge[ultra thick, above, left,postaction={decorate}] node{$a$}(q3)
               (q4) edge[bend right, below,postaction={decorate}] node{$b$}(q0)
               ;
       \end{tikzpicture}
                 \caption{}
        \label{fig:core_inf_index2}
       \end{subfigure}
       \newline
       \begin{subfigure}[b]{.8\textwidth}
       \centering
       \includegraphics[width=\textwidth]{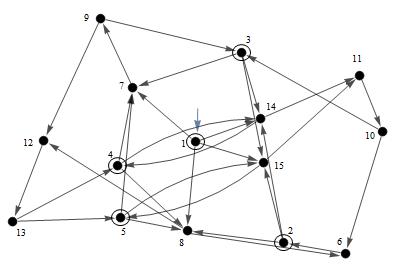}
       \caption{}
        \label{fig:auto_inf_index2}
       \end{subfigure}
       \caption{(\ref{fig:core_inf_index2}) The core graph $\Delta_H$ of $H = \langle b^2, bab^{-1}a^{-1},a^3 \rangle$. (\ref{fig:auto_inf_index2}) is the Moore diagram of $\D_H$.}
        \end{figure}
        \end{enumerate}
\end{enumerate}
\subsection{Computations of cogrowth using a Nielsen basis of $H$}\label{S:7.2}
In each of the examples below, we compute the growth series by solving the system (\ref{gri_system}) explained in the Section \ref{sec:computing_growth}. We shall consider the same set of examples that we have discussed in the previous section (\ref{section7.1}).
\begin{enumerate}
    \item Let $H$ be a finite index subgroup of $F_m$.
        \begin{enumerate}
            \item $H = \langle a^2, b, c, aba^{-1}, aca^{-1}\rangle$ is a subgroup of $F_3$. Solving the system 
    $$\left(\begin{array}{cc}
   B_1 & B_2 \\
   B_3 & B_4
\end{array}\right)\left(\begin{array}{c}
     Y_1 \\
     Y_2
\end{array}\right) = \left(\begin{array}{c}
     Z_1 \\ Z_2 
\end{array}\right),$$
 where  $$ B_1 = \left(
\begin{array}{ccccc}
 1-z^2 & 0 & -z^2 & -z^2 & -z^2 \\
 0 & 1-z^2 & -z^2 & -z^2 & -z^2 \\
 -z & -z & 1-z & 0 & -z \\
 -z & -z & 0 & 1-z & -z \\
 -z & -z & -z & -z & 1-z \\
\end{array}
\right), B_2 = \left(
\begin{array}{ccccc}
 -z^2 & -1 & -1 & -1 & -1 \\
 -z^2 & -z^2 & -z^2 & -z^2 & -z^2 \\
 -z & -z & -z & -z & -z \\
 -z & -z & -z & -z & -z \\
 0 & -z & -z & -z & -z \\
\end{array}
\right),$$
 $$ B_3 = \left(
\begin{array}{ccccc}
 -z & -z & -z & -z & 0 \\
 -z^3 & -z & -z^3 & -z^3 & -z^3 \\
 -z^3 & -z & -z^3 & -z^3 & -z^3 \\
 -z^3 & -z & -z^3 & -z^3 & -z^3 \\
 -z^3 & -z & -z^3 & -z^3 & -z^3 \\
\end{array}
\right), B_4 = \left(
\begin{array}{ccccc}
 1-z & -z & -z & -z & -z \\
 -z^3 & 1-z & 0 & -z & -z \\
 -z^3 & 0 & 1-z & -z & -z \\
 -z^3 & -z & -z & 1-z & 0 \\
 -z^3 & -z & -z & 0 & 1-z \\
\end{array}
\right),$$
$$Y_1 = \left(
\begin{array}{c}
 H_1^1 (z) \\
 H_1^{-1} (z) \\
 H_2^1 (z) \\
 H_2^{-1} (z) \\
 H_3^1 (z)
\end{array}
\right), Y_2 = \left(
\begin{array}{c}
 H_3^{-1} (z) \\
 H_4^1 (z) \\
 H_4^{-1} (z) \\
 H_5^1 (z) \\
 H_5^{-1} (z)
\end{array}
\right), Z_1 = \left(
\begin{array}{c}
 z^2 \\
 z^2 \\
 z \\
 z \\
 z \\
\end{array}
\right) \textnormal{ and } Z_2 = \left(
\begin{array}{c}
 z \\
 z^3 \\
 z^3 \\
 z^3 \\
 z^3 \\
\end{array}
\right)$$
we get $$ H(z) = \displaystyle \frac{(1+z)\left(1-4z+5 z^2\right)}{(1-5z)\left(1-2z+5z^2\right)}\textnormal{ and } \alpha_H =  5.$$
\item $H = \langle a^3,ab,ab^{-1},a^{-1}ba\rangle$ is a subgroup of $F_2$. Solving the system 
    $$BY = Z,$$
 where
 $$B = \left(
\begin{array}{cccccccc}
 1-z^3 & 0 & -z^3 & -z & -z^3 & -z & -z^3 & -z^3 \\
 0 & 1-z^3 & -z^3 & -z^3 & -z^3 & -z^3 & -z & -z \\
 -z^2 & -1 & 1-z^2 & 0 & -z^2 & -1 & -z^2 & -z^2 \\
 -z^2 & -z^2 & 0 & 1-z^2 & -z^2 & -z^2 & -z^2 & -z^2 \\
 -z^2 & -1 & -z^2 & -1 & 1-z^2 & 0 & -z^2 & -z^2 \\
 -z^2 & -z^2 & -z^2 & -z^2 & 0 & 1-z^2 & -z^2 & -z^2 \\
 -z & -z^3 & -z^3 & -z^3 & -z^3 & -z^3 & 1-z & 0 \\
 -z & -z^3 & -z^3 & -z^3 & -z^3 & -z^3 & 0 & 1-z \\
\end{array}
\right),$$
 $$Y =\left(
\begin{array}{c}
 H_1^1(z) \\
 H_1^{-1}(z) \\
 H_2^1(z) \\
 H_2^{-1}(z) \\
 H_3^1(z) \\
 H_3^{-1}(z) \\
 H_4^1(z) \\
 H_4^{-1}(z) \\
\end{array}
\right) \textnormal{ and } Z =\left(
\begin{array}{c}
 z^3 \\
 z^3 \\
 z^2 \\
 z^2 \\
 z^2 \\
 z^2 \\
 z^3 \\
 z^3 \\
\end{array}
\right)$$
we get
$$ H(z) = \displaystyle \frac{(1+z)(1-2z+5z^2-6z^3+9z^4)}{(1-3z) \left(1-z+3z^2\right) \left(1+3z+3z^2\right)} \textnormal{ and } \alpha_H = 3.$$
\end{enumerate}
\item Let $H$ be an infinite index subgroup of $F_m$.
\begin{enumerate}
\item $H = \langle aba^{-1}, aca^{-1}\rangle$ is a subgroup of $F_3$. Solving the system
    $$BY = Z,$$
 where  $$ B = \left(
\begin{array}{cccc}
 1-z & 0 & -z & -z \\
 0 & 1-z & -z & -z \\
 -z & -z & 1-z & 0 \\
 -z & -z & 0 & 1-z \\
\end{array}
\right),$$
$$Y =\left(
\begin{array}{c}
 H_1^1(z) \\
 H_1^{-1}(z) \\
 H_2^1(z) \\
 H_2^{-1}(z) \\
\end{array}
\right) \textnormal{ and } Z =\left(
\begin{array}{c}
 z^3 \\
 z^3 \\
 z^3 \\
 z^3 \\
\end{array}
\right)$$
we get
$$ H(z) = \displaystyle 1+ \frac{4z^3}{1-3z}\textnormal{ and } \alpha_H = 3. $$
\item $H = \langle b^2, bab^{-1}a^{-1}, a^3\rangle$ is a subgroup of $F_2$. Solving the system
    $$BY = Z,$$
 where  $$ B = \left(
\begin{array}{cccccc}
 1-z^2 & 0 & -z^2 & -1 & -z^2 & -z^2 \\
 0 & 1-z^2 & -z^2 & -z^2 & -z^2 & -z^2 \\
 -z^4 & -z^2 & 1-z^4 & 0 & -z^4 & -z^4 \\
 -z^4 & -z^4 & 0 & 1-z^4 & -z^4 & -z^2 \\
 -z^3 & -z^3 & -z & -z^3 & 1-z^3 & 0 \\
 -z^3 & -z^3 & -z^3 & -z^3 & 0 & 1-z^3 \\
\end{array}\right),$$
$$Y =\left(
\begin{array}{c}
 H_1^1(z) \\
 H_1^{-1}(z) \\
 H_2^1(z) \\
 H_2^{-1}(z) \\
 H_3^1(z) \\
 H_3^{-1}(z) \\
\end{array}
\right) \textnormal{ and } Z =\left(
\begin{array}{c}
 z^2 \\
 z^2 \\
 z^4 \\
 z^4 \\
 z^3 \\
 z^3 \\
\end{array}
\right)$$
we get
\begin{flalign}
        H(z) = \displaystyle\frac{1+z+3z^2+3z^3+5z^4+5z^5+6z^6+6z^7+4z^8+4z^9}{1+z+z^2-z^3-5z^4-13z^5-16z^6-20 z^7-12z^8-12z^9} && \nonumber
        \end{flalign}
        and $\alpha_H = 1.88233.$
\end{enumerate}
\end{enumerate}
\section{Acknowledgments}\label{S:Ackn}
RG was partially supported by Simons Foundation Collaboration Grant for Mathematicians, Award Number 527814. Also, RG  acknowledges the support of the Max Planck Institute for Mathematics in Bonn and Humboldt Foundation.
AS acknowledges United States-India Educational Foundation(USIEF), New Delhi and U.S. Department of State for the Fulbright Nehru Postdoctoral Research Fellowship, Award No.2479/FNPDR/2019. AS is grateful to S. P. Mandali, Pune and Government of Maharashtra for the sanction of study leave to undertake the fellowship. Also, AS acknowledges the generous hospitality of ICTS-TIFR, Bengaluru during the GGD 2017 program, where the initial idea of the project was discussed among the authors. The authors also would like to thank the anonymous referee for many suggestions that improved the exposition of the paper.
%%%%%%%%%%%%%
\bibliographystyle{plain}
\bibliography{Darbinyan_Grigorchuk_Shaikh_JGCC}
\end{document}